\documentclass[11pt,reqno]{amsart}

\usepackage[T1]{fontenc}
\usepackage[utf8]{inputenc}
\usepackage{lmodern}
\usepackage{microtype}
\usepackage[margin=1.02in]{geometry}
\usepackage{mathtools,amssymb,amsfonts}
\usepackage{enumitem}
\usepackage{xcolor}
\usepackage{booktabs,tabularx,array}
\usepackage{placeins}
\usepackage{hyperref}

\definecolor{deepblue}{HTML}{17365D}
\definecolor{burgundy}{HTML}{7B2131}
\hypersetup{
  colorlinks=true,
  linkcolor=deepblue,
  citecolor=burgundy,
  urlcolor=deepblue,
  pdftitle={Rainbow spanning configurations in optimally uniformly coloured percolated pseudorandom graphs}
}

\newtheorem{theorem}{Theorem}[section]
\newtheorem{lemma}[theorem]{Lemma}
\newtheorem{corollary}[theorem]{Corollary}

\theoremstyle{plain}
\newtheorem{definition}[theorem]{Definition}
\newtheorem{observation}[theorem]{Observation}
\theoremstyle{remark}
\newtheorem{remark}{Remark}[section]

\newcommand{\PP}{\mathbb{P}}

\newcommand{\cA}{\mathcal A}
\newcommand{\cF}{\mathcal F}
\newcommand{\cH}{\mathcal H}
\newcommand{\cT}{\mathcal T}

\newcommand{\ee}{\vec e}
\newcommand{\evec}{\vec e}

\newcommand{\cK}{\mathcal K}

\setlist[enumerate]{leftmargin=2.2em,itemsep=0.4em,topsep=0.4em}
\allowdisplaybreaks

\title[Exact-palette rainbow embeddings]
{Exact-palette rainbow embeddings in \\ uniformly coloured pseudorandom graphs}
\author{Elad Aigner-Horev, Dan Hefetz, Yury Person, Michael Trushkin}
\date{}

\begin{document}

\begin{abstract}
We study rainbow spanning configurations in bijumbled graphs whose edges are
coloured independently and uniformly from a prescribed palette.
For $n$-vertex $(p,\beta)$-bijumbled graphs with minimum degree at least
a fixed positive multiple of $pn$, we obtain rainbow perfect matchings
and Hamilton cycles with a sufficient palette surplus of order
$(\log n)/p$, assuming $pn=\omega(\log n)$ and $\beta\le cpn$ for a
sufficiently small constant $c$. For each prescribed spanning tree
of fixed maximum degree $\Delta\ge2$, a surplus of order
$L_{n,\Delta}(\log n)/p$ suffices under
$\beta\le cpn/L_{n,\Delta}$, where
$L_{n,\Delta}=\Delta^{5\sqrt{\log n}}$.
The palette surplus is sublinear under these hypotheses. These results
use a McDiarmid-type coupling and retain the discrepancy scales of
the relevant deterministic embedding theorems.

We also prove exact-palette results, using precisely as many colours as the
number of edges of the target configuration. After independent edge percolation at rate $\rho$, it is shown that a rainbow perfect matching or  Hamilton cycle exists asymptotically
almost surely when $\rho pn \ge C(\log n)^2$ and $\beta\le\gamma pn$ for appropriate constants $C$ and $\gamma$.
We obtain corresponding results for each prescribed bounded-degree
spanning tree and for clique factors under appropriate stronger
hypotheses. The exact-palette proofs construct spread measures from
uncoloured containment estimates and apply the rainbow threshold
theorem of Han and Yuan.



\end{abstract}

\maketitle

\section{Introduction}
\label{sec:introduction}

\noindent
\textbf{Synopsis.} A spanning graph configuration with $h$ edges trivially requires at least $h$ colours to be
rainbow under any edge-colouring. We investigate how closely the palette size can approach this elementary lower bound in uniformly coloured sparse pseudorandom graphs. For $(p,\beta)$-bijumbled graphs $G$, satisfying $\delta(G) = \Omega(pn)$, $pn = \omega(\log n)$ and $\beta = O(pn)$, our results show that it suffices to have a surplus of order $\log n/p$ for perfect matchings and Hamilton
cycles; for prescribed spanning trees having a fixed maximum degree $\Delta \geq 2$, a surplus of order $L_{n,\Delta} \log n/p$ suffices, where $L_{n,\Delta} := \Delta^{5\sqrt{\log n}}$, provided that $\beta = O(pn/L_{n,\Delta})$. Throughout these results, the bounds imposed on $p$ and $\beta$ are comparable to the best known bounds for the emergence of these configurations in uncoloured bijumbled graphs. 


A second strand of results we obtain shows that raising the edge density from $\Omega(\log n/n)$ to $\Omega((\log n)^2/n)$, whilst keeping the $\beta = O(p n)$ assumption, is enough to support rainbow embeddings of perfect matchings and Hamilton cycles whilst eliminating the palette surplus completely, leading to {\sl exact-palette} rainbow embeddings of said configurations. In fact, such embeddings are shown to emerge asymptotically almost surely (a.a.s. hereafter) in {\sl percolations} of the bijumbled host in which each edge of the latter is retained independently with probability $\rho := \rho(n) \in (0,1]$; provided that $\delta(G) = \Omega(pn)$, $\rho pn\geq C(\log n)^2$, and $\beta = O(pn)$. We also obtain exact-palette rainbow embedding results for prescribed bounded-degree spanning trees and clique factors in percolated bijumbled graphs. Here too, the asymptotic gap between our required parameter bounds and the best known bounds for the emergence of these configurations in the colourless setting is fairly mild. 

Our results pertaining to the survival of rainbow spanning configurations in bijumbled graphs and percolations thereof assert that despite being subjected to an exact-palette uniform colouring and the requirement that the configuration is rainbow, the structures in question exhibit a measure of {\sl statistical robustness}. This type of robustness was studied for $(n,d,\lambda)$-graphs by Chen--Chen--Han--Zhao~\cite{ChenChenHanZhao2025} in the colourless setting. 

\medskip

\noindent
\textbf{Edge colourings.} A subgraph is said to be \emph{rainbow} with respect to a given edge colouring of its host if all its edges receive distinct colours. By a \emph{$Q$-uniform colouring} of a graph, we mean a random colouring in which each edge is independently assigned a colour sampled uniformly at random from a prescribed palette of size $Q$; a \emph{$[Q]$-uniform colouring} uses the palette
$[Q]:=\{1,\ldots,Q\}$. For an $h$-edge target configuration, $Q-h$ is the
\emph{palette surplus}; the palette is called \emph{exact} if $Q=h$. A standard coupon-collector argument illustrates the substantial distinction
between exact-palette rainbow embeddings and those using a palette of size $h+o(h)$. 



\medskip
\noindent\textbf{Bijumbled graphs.} For a graph $G$ and sets $X,Y \subseteq V(G)$, define the ordered edge count
$$
 \evec_G(X,Y) := \bigl|\{(x,y) \in X \times Y : xy \in E(G)\}\bigr|;
$$
note that edges spanned by $X\cap Y$ are counted twice.


The following notion introduced in~\cite{KoRoScSiSk2007} (see also~\cite{Tho1987}) quantifies pseudorandomness of a graph. Given $0<p\leq1$
and $\beta\geq0$, the graph $G$ is said to be \emph{$(p,\beta)$-bijumbled} if
$$
 \left|\ee_G(X,Y) - p|X||Y|\right| \leq \beta \sqrt{|X||Y|}
$$
for all $X,Y\subseteq V(G)$. We often refer to the above inequality as the discrepancy (condition) and to $\beta$ as the discrepancy parameter. Note that $e_G(X,Y) = \ee_G(X,Y)$ holds whenever $X$ and $Y$ are disjoint; hence, we use both notations interchangeably in this case. 
A bipartite graph $H=(U,W;E)$ is
\emph{$(p,\beta)$-bijumbled} if
$$
 \left|e_H(X,Y)-p|X||Y|\right|
 \leq\beta\sqrt{|X||Y|}
$$
for all $X \subseteq U$ and $Y \subseteq W$.  

An \emph{$(n,d,\lambda)$-graph} is an $n$-vertex $d$-regular
graph whose second largest eigenvalue in absolute value is $\lambda$. The expander mixing lemma~\cite{AC1988} implies that every
$(n,d,\lambda)$-graph is $(d/n,\lambda)$-bijumbled; our results therefore
apply to such graphs by substituting $p=d/n$ and $\beta=\lambda$.


\medskip
\noindent\textbf{Uniformly coloured random graphs.}
Write $\mathbb G(n,p)$ for the binomial random graph. Table~\ref{tab:random-graph-literature} records representative results
for the configurations considered here and uniform colourings of the host. Here, and throughout the paper, $\log$ denotes the natural logarithm, and floor and ceiling signs are suppressed whenever they are not crucial for understanding.

\begin{table}[htbp]
\centering
\caption{{\small \textbf{Rainbow spanning configurations in uniformly coloured
random graphs.} Each row asserts emergence asymptotically almost surely.
Constants $C$ are sufficiently large and may depend on the fixed
parameters in that row. $\cT(n,\Delta)$ denotes the family of all $n$-vertex trees of maximum degree at most $\Delta$. Palette sizes are rounded to integers.}}
\label{tab:random-graph-literature}
\small
\renewcommand{\arraystretch}{1.18}
\setlength{\tabcolsep}{3.5pt}
\begin{tabular}{@{}>{\raggedright\arraybackslash}p{0.235\linewidth}>{\raggedright\arraybackslash}p{0.15\linewidth}>{\raggedright\arraybackslash}p{0.235\linewidth}>{\raggedright\arraybackslash}p{0.31\linewidth}@{}}
\hline
\textbf{Authors and reference} & \textbf{Palette size} & \textbf{Configuration} & \textbf{Parameters} \\[4pt]
\hline
Frieze--McKay~\cite{FriezeMcKay1994}
& $n-1$ & A spanning tree
& $p\geq(2+\varepsilon)\log n/n$ \\[4pt]
Frieze--Loh~\cite{FriezeLoh2014}
& $(1+\xi)n$ & Hamilton cycle
& $p=(1+\varepsilon)\log n/n$;
  $\varepsilon,\xi>125/\sqrt{\log\log n}$ \\[4pt]
Ferber--Krivelevich~\cite{FerberKrivelevich2016}
& $(1+\varepsilon)n$ & Hamilton cycle
& $p=(\log n+\log\log n+\omega(1))/n$;
  fixed $\varepsilon>0$ \\[4pt]
Bal--Frieze~\cite{BalFrieze2016}
& $n/2$ & Perfect matching
& $p\geq C\log n/n$; $2\mid n$ \\[4pt]
Bal--Frieze; Ferber~\cite{BalFrieze2016,Ferber2015Gaps}
& $n$ & Hamilton cycle
& $p\geq C\log n/n$ \\[4pt]
Bell--Frieze--Marbach~\cite{BellFriezeMarbach2024}
& $n-1$ & Prescribed $T\in\cT(n,\Delta)$
& $p\geq C(\Delta)\log n/n$; fixed $\Delta$ \\[4pt]
Ferber--Krivelevich; Johansson--Kahn--Vu~\cite{FerberKrivelevich2016,JohanssonKahnVu2008}
& $(1+\varepsilon)(k-1)n/2$ & $K_k$-factor
& $p\geq C(k,\varepsilon)n^{-2/k}(\log n)^{1/\binom{k}{2}}$;
  $k\mid n$ \\[4pt]
\hline
\end{tabular}
\end{table}

\FloatBarrier
\medskip
\noindent\textbf{Uniformly coloured randomly perturbed graphs.}
In a specific instantiation of the randomly perturbed model, a deterministic {\sl seed} $n$-vertex graph $G_0$, satisfying $\delta(G_0)\geq\delta n$, with $\delta>0$ independent of $n$, is fixed 
and then receives additional edges sampled through the binomial random
graph $\mathbb G(n,p)$; edges of the resulting union are then coloured uniformly from a prescribed palette. Table~\ref{tab:perturbed-rainbow-literature} records the progression from a large linear palette to exact palettes in this instantiation of the perturbed model; all results are with respect to uniform colourings. Throughout the table,
$\delta\in(0,1/2)$ and $\Delta\geq2$ are fixed. 

\begin{table}[htbp]
\centering
\small
\renewcommand{\arraystretch}{1.18}
\setlength{\tabcolsep}{4pt}
\caption{{\small \textbf{Rainbow spanning configurations in randomly perturbed graphs.}
Every row assumes $\delta(G_0)\geq\delta n$ and asserts existence
asymptotically almost surely in the uniformly coloured union
$G_0\cup \mathbb G(n,p)$. A prescribed tree has maximum degree at most
the fixed constant $\Delta$; an unspecified spanning tree need not satisfy
this bound. Palette sizes are rounded to integers.}}
\label{tab:perturbed-rainbow-literature}
\begin{tabular}{@{}>{\raggedright\arraybackslash}p{.27\textwidth}>{\raggedright\arraybackslash}p{.17\textwidth}>{\raggedright\arraybackslash}p{.24\textwidth}>{\raggedright\arraybackslash}p{.25\textwidth}@{}}
\hline
\textbf{Authors and reference} & \textbf{Palette size} &
\textbf{Configuration} & \textbf{Parameters}\\[4pt]
\hline
Anastos--Frieze~\cite{AnastosFrieze2019Perturbed}
& $K_\delta n$
& Hamilton cycle
& $p\geq C(\delta)/n$; $K_\delta=120-20\log\delta$\\[4pt]
Aigner-Horev--Hefetz~\cite{AignerHorevHefetz2021Perturbed}
& $(1+\varepsilon)n$
& Hamilton cycle
& Fixed $\varepsilon>0$; $p\geq C(\delta,\varepsilon)/n$\\[4pt]
Aigner-Horev--Hefetz--Lahiri~\cite{AignerHorevHefetzLahiri2023Trees}
& $(1+\varepsilon)n$
& Prescribed $T \in \cT(n,\Delta)$
& Fixed $\varepsilon>0$; $p=\omega(1/n)$\\[4pt]
Aigner-Horev--Hefetz--Lahiri~\cite{AignerHorevHefetzLahiri2023Trees}
& $n-1$
& Unspecified spanning tree
& $p=\omega(n^{-2})$\\[4pt]
Katsamaktsis--Letzter--Sgueglia~\cite{KatsamaktsisLetzterSgueglia2024Hamilton}
& $n$
& Hamilton cycle
& $p\geq C(\delta)/n$\\[4pt]
Katsamaktsis--Letzter--Sgueglia~\cite{KatsamaktsisLetzterSgueglia2025Perturbed}
& $n-1$
& Prescribed $T \in \cT(n,\Delta)$
& $p\geq C(\delta,\Delta)/n$\\[4pt]
\hline
\end{tabular}
\end{table}

\medskip
\noindent\textbf{Spread measures and rainbow thresholds.}
Roughly put, {\sl spread measures} provide a common framework for studying copies of a target configuration by controlling the probability that a randomly chosen copy contains any prescribed set of edges. For a finite set $\Omega$ and a subset $S\subseteq\Omega$, let
$\langle S\rangle = \{A\subseteq\Omega:S\subseteq A\}$.

\begin{definition}[Spread measure]
\label{def:spread}
Let $\Omega$ be finite and let $q>0$. A probability measure $\mu$ on
$2^\Omega$ is \emph{$q$-spread} if
$$
 \mu(\langle S\rangle)\leq q^{|S|}
 \qquad\text{holds for every }S\subseteq\Omega.
$$
A family $\cA\subseteq2^\Omega$ \emph{supports} such a measure if
$\mu(\cA)=1$.
\end{definition}

Thus, the probability of including any prescribed set of $s$ elements is at most $q^s$.
For graph embeddings in a host graph $G$, the ground set is $E(G)$ and the members of $\cA$ are edge-sets of copies of the target configuration; a smaller $q$ expresses
stronger control over joint edge-inclusion probabilities.

The notion of spread connects the distribution of configurations in a
fixed host with their survival under random sampling/percolation. Spread measures were introduced by Talagrand~\cite[Section~6]{Talagrand2010} and their development is closely associated with the (abstract) expectation threshold conjecture by Kahn and Kalai~\cite{KahnKalai2007}, its relaxation by Talagrand~\cite[Section~8]{Talagrand2010},
the breakthrough work of Alweiss, Lovett, Wu and
Zhang on the sunflower conjecture~\cite{AlweissLovettWuZhang2021}, and the work by Frankston, Kahn, Narayanan and Park~\cite{FrankstonKahnNarayananPark2021} who proved Talagrand's expectation threshold conjecture, and the subsequent resolution of the expectation threshold conjecture of Kahn and Kalai by Park and Pham~\cite{ParkPham2024}. The result in~\cite{FrankstonKahnNarayananPark2021} shows that, if a family of sets of size at most $h$ supports a $q$-spread measure, then the randomly retained set a.a.s. contains a member of the family when each ground-set element is retained independently at rate $Cq\log h$, provided this rate is at most one. A (full) rainbow version for uniform spread measures was recently obtained by Han and Yuan~\cite{HanYuan2025}.
 We use spread measures to reduce the rainbow embedding problem in coloured percolated bijumbled graphs to constructing a measure on uncoloured configurations with suitable bounds on the probability that a randomly chosen configuration contains any prescribed set of edges.

For comparison with the hypergraph terminology, an $h$-bounded
multihypergraph is a family of sets of size at most $h$, with repeated
members allowed; it is $\kappa$-spread if its uniform edge measure is
$\kappa^{-1}$-spread. Table~\ref{tab:spread-thresholds} records the
rainbow results most directly relevant to our results; all results are with respect to uniform colourings. Here, $N$ is the size of the ground set, $\varepsilon>0$ is fixed, and $\rho$ is the retention probability of edges of the host.

\begin{table}[htbp]
\centering
\small
\setlength{\tabcolsep}{4pt}
\renewcommand{\arraystretch}{1.2}
\caption{{\small \textbf{Spread measures and exact-palette rainbow containment.}}}
\label{tab:spread-thresholds}
\small
\begin{tabularx}{\textwidth}{@{}>{\raggedright\arraybackslash}p{.24\textwidth}>{\raggedright\arraybackslash}p{.10\textwidth}>{\raggedright\arraybackslash}p{.23\textwidth}>{\raggedright\arraybackslash}X@{}}
\toprule
\textbf{Authors and reference} & \textbf{Palette size} & \textbf{Configuration} & \textbf{Parameters} \\[4pt]
\midrule
Bell--Frieze--Marbach~\cite{BellFriezeMarbach2024}
& $h$
& Edge of an $h$-bounded multihypergraph
& $\kappa$-spread; $\kappa=\Omega(h)$,
  $N\leq\kappa^2h/(\log h)^5$;
  a uniform $m$-set with $m\geq C N\log h/\kappa$ \\[4pt]
Han--Yuan~\cite{HanYuan2025}
& $h$
& Edge of an $h$-bounded multihypergraph
& $\kappa$-spread; independent retention
  $\rho\geq C\log h/\kappa$ \\[4pt]
Han--Yuan~\cite{HanYuan2025}
& $n-1$
& Prescribed tree $T \in \cT(n,\Delta)$
& $\delta(G)\geq(1/2+\varepsilon)n$;
  $\rho=C\log n/n$ \\[4pt]
Han--Yuan~\cite{HanYuan2025}
& $n/k$
& Perfect matching in a $k$-uniform hypergraph
& Minimum $\ell$-degree above its asymptotic Dirac threshold;
  $\rho=C\log n/n^{k-1}$ \\[4pt]
\bottomrule
\end{tabularx}
\end{table}

The first row already permits an exact palette; its restrictions concern
the spread parameter and the ground-set size.
The second row removes both restrictions while preserving the
$(\log h)/\kappa$ sampling rate. Thus, the advance of
Han--Yuan~\cite[Theorem~3]{HanYuan2025} over
Bell--Frieze--Marbach~\cite[Theorem~2]{BellFriezeMarbach2024} is a general
rainbow spread theorem. Han--Yuan also establish a rainbow analogue of
Spiro's theorem~\cite{Spiro2023}, allowing smaller sampling rates
under stronger overlap conditions.

The last two rows concern randomly coloured sparsifications of fixed
dense hosts. For fixed $1\leq\ell<k$, let $\delta^+_{\ell,k}$ be the
infimum of the real numbers $a$ such that, for every $\varepsilon>0$
and all sufficiently large multiples $n$ of $k$, every $n$-vertex
$k$-uniform hypergraph of minimum $\ell$-degree at least
$(a+\varepsilon)\binom{n}{k-\ell}$ admits a perfect matching.
In the last row, the hypothesis is
$\delta_\ell(H)\geq(\delta^+_{\ell,k}+\varepsilon)
\binom{n}{k-\ell}$. The tree and matching assertions combine the
rainbow theorem with spread constructions of
Pham, Sah, Sawhney and Simkin~\cite{PhamSahSawhneySimkin2022} and,
for hypergraph matchings, the independent work of
Kang, Kelly, K\"uhn, Osthus and
Pfenninger~\cite{KangKellyKuhnOsthusPfenninger2022}.

The removal of the condition $\kappa=\Omega(h)$ is essential for the
measures used here. For matchings and Hamilton cycles we construct
$q$-spread measures with $q$ of order $\log n/(pn)$;
the resulting value $\kappa=1/q$ has order $pn/\log n$, whereas
$h$ has order $n$. In particular, when $p=o(1)$, these parameters do
not meet the additional hypothesis of Bell--Frieze--Marbach.

We employ spread measures in our work here due to their ability to serve as deterministic certificates ensuring high probability random containment~\cite{BellFriezeMarbach2024,FrankstonKahnNarayananPark2021,HanYuan2025} in various structures of interest.

\FloatBarrier
\medskip
\noindent\textbf{Pseudorandom and expander hosts.}
The deterministic theory of spanning embeddings provides the structural
input for our rainbow results. Its development encompasses Hamilton
cycles and their powers, almost spanning and spanning bounded-degree
trees, prescribed $2$-factors, and clique factors in spectral and bijumbled
graphs; see~\cite{AllenBottcherHanKohayakawaPerson2017,AlonKrivelevichSudakov2007,BaloghCsabaPeiSamotij2010, FerberHanMaoVershynin2024, GlockMunhaCorreiaSudakov2023,HanKohayakawaMorrisPerson2019,HanKohayakawaMorrisPerson2021,HanKohayakawaPerson2021,HefetzKrivelevichSzabo2009,HydeMorrisonMuyesserPavezSigne2025, JohannsenKrivelevichSamotij2013,KrivelevichSudakov2003,KrivelevichSudakovSzabo2004,Nenadov2019}
and our companion work~\cite{AignerHorevHefetzPersonTrushkin2026}
for a fuller account. We state below the three deterministic embedding
theorems used in our proofs; their respective hypotheses underlie the
different bijumbledness scales in our conclusions.

\begin{definition}
\label{def:external-neighbourhood}
For a graph $G$ and a set $X\subseteq V(G)$, the \emph{external
neighbourhood} of $X$ in $G$ is
$$
 \Gamma_G(X):=\{v\in V(G)\setminus X:
                 \text{$v$ has a neighbour in $X$}\}.
$$
\end{definition}

\begin{definition}
\label{def:expander}
An $n$-vertex graph $G$ is said to form a 
\emph{$C$-expander}, where $C > 0$, if both of the following assertions hold.
\begin{enumerate}[label=\textup{(\alph*)}]
\item [\emph{(E1)}] $|\Gamma_G(X)| \geq C|X|$ for every
      $X \subseteq V(G)$ of size $|X| < n/(2C)$.
\item [\emph{(E2)}] There is an edge of $G$ between every two disjoint sets
      $X,Y \subseteq V(G)$ of sizes $|X|,|Y|\geq n/(2C)$.
\end{enumerate}
\end{definition}

The following breakthrough result of Dragani\'c, Montgomery, Munh\'a Correia, Pokrovskiy, and Sudakov~\cite{DraganicMontgomeryMunhaCorreiaPokrovskiySudakov2024}
is the deterministic input for both of our Hamilton cycle results; it asserts that constant expansion suffices for Hamiltonicity.

\begin{theorem}
\emph{~\cite[Theorem~1.4]{DraganicMontgomeryMunhaCorreiaPokrovskiySudakov2024}}
\label{thm:expander-hamilton}
For every sufficiently large constant $C$, every $C$-expander is
Hamiltonian.
\end{theorem}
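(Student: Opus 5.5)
Theorem~\ref{thm:expander-hamilton} is imported from~\cite{DraganicMontgomeryMunhaCorreiaPokrovskiySudakov2024}; the paper uses it as a black box and does not reprove it. If I had to prove it, I would follow the extendability framework of Friedman--Pippenger and Glebov--Johannsen--Krivelevich, combined with an absorption-type linking step. The hypothesis supplies only two things: constant vertex expansion (E1) for small sets, and the edge property (E2) between any two large disjoint sets.

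\textbf{Step 1 (extendable subgraphs).} Call a subgraph $S\subseteq G$ \emph{$(d,m)$-extendable} if $\Delta(S)\le d$ and every set $X$ with $|X|\le 2m$ satisfies
$$|\Gamma_G(X)\setminus V(S)|\ge (d-1)|X|-\sum_{x\in X\cap V(S)}(d_S(x)-1).$$
Taking $d$ to be a large constant below $C$ and $m=\Theta(n/C)$, (E1) makes the empty graph extendable. The standard lemmas then give:
\begin{enumerate}
\item one can add a leaf, or a path of length $O(\log n)$ between two far-apart vertices, while preserving extendability;
\item one can remove a leaf without destroying extendability.
\end{enumerate}
In both cases (E2) provides the connecting edge between two large reachable sets.

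\textbf{Step 2 (a nearly spanning path system).} Using these operations, I would greedily grow an extendable linear forest with $O(n/\log n)$ components that covers all but a set $R$ of at most $\varepsilon n$ vertices. I would also set aside, in advance, a small random-like reservoir. Pósa-type rotations inside the extendable framework then show that each path endpoint can be rerouted to a set of $\Omega(n)$ alternative endpoints. By (E2), two such endpoint sets can always be joined by an edge, so the paths can be merged one at a time.

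\textbf{Step 3 (absorbing $R$ and closing the cycle).} This is the step I expect to be the main obstacle. Constant expansion is far too weak for the usual booster or absorber constructions, which need degree or expansion of order $\log n$. My first attempt would build a linking structure on a sublinear vertex set before Step~2, in the spirit of sorting networks. It would consist of many short extendable paths arranged so that, for any specified small set of leftover vertices and any pairing of path endpoints, there is a way to route through the structure that uses exactly its own vertices plus the leftovers. Each vertex of $R$ would be attached by expansion to many gadgets, and a matching argument of Hall type, justified by (E1), would assign vertices to distinct gadgets.

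Finally, I would join the resulting paths into a single cycle through the reserved linking structure, repeatedly using (E2) between the large endpoint sets produced by rotations. The delicate point throughout is keeping the partial structure extendable while the number of vertices to be absorbed is still linear; I would try to do this by processing $R$ in geometrically shrinking batches.
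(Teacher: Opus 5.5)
You are right that the paper does not prove Theorem~\ref{thm:expander-hamilton}. It is quoted from Dragani\'c--Montgomery--Munh\'a Correia--Pokrovskiy--Sudakov and used only as a black box in the Hamilton-cycle results, so there is no in-paper argument to compare with. Read as a proof of the theorem itself, however, your sketch has a genuine gap, and it sits exactly where the difficulty lies. Steps~1--2 are standard extendability machinery. The leaf-adding and connecting lemmas only work while a linear-size reservoir of unused vertices (of order $dm$) remains. So these steps can at best give a path system that misses a linear-size set $R$. Covering $R$ and closing the cycle with only constant expansion is where the whole difficulty of the theorem lies. In Step~3 you postulate, rather than construct, the object needed: a sublinear linking structure that can route through an arbitrary small leftover set under an arbitrary pairing of endpoints. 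Handling this last stage with only (E1) and (E2) is exactly what the cited work had to develop new ideas for. As written, the sketch assumes what it must prove.

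The mechanism you propose for Step~3 also fails as stated. A $C$-expander can have bounded maximum degree; for instance, a $d$-regular $(n,d,\lambda)$-graph with $d$ constant and $d/\lambda$ a large enough constant is one. Then a vertex set $S$ with $|S|=o(n)$ has at most $\Delta(G)|S|=o(n)$ neighbours. Since $R$ is the uncontrolled output of a greedy process and may have size $\varepsilon n$, most of its vertices may have no neighbour at all in your reserved structure, so they cannot be ``attached by expansion to many gadgets''. Likewise, (E1) bounds $|\Gamma_G(X)|$ in the whole of $G$, not the number of neighbours inside the gadget set. So it does not give Hall's condition for assigning leftover vertices to distinct gadgets. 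Processing $R$ in geometrically shrinking batches meets the same obstruction in every batch. A workable scheme must either confine the final leftover set in advance to vertices that are well attached to the absorbing structure, or reach leftover vertices through paths over already-used vertices that are then re-covered; your sketch provides neither.

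There is also a smaller issue in Step~2. P\'osa's lemma for one path inside a path forest needs the neighbours of the rotated endpoints to lie on that path, and nothing guarantees this. Moreover, a rotation turns an interior vertex into an endpoint, lowering its $d_S$-value, which can violate the extendability inequality. Joining paths through the reservoir with the connecting lemma is the safe route there.
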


For spanning trees we use the stronger expansion requirement of
Han and Yang~\cite{HanYang2022}. A graph is called \emph{$\cT(n,\Delta)$-universal}
 if it contains a copy of every member of $\cT(n,\Delta)$.

\begin{theorem}
\emph{~\cite[Theorem~1.11]{HanYang2022}}
\label{thm:han-yang}
There exists a constant $n_0$ such that for all integers
$n \geq n_0$ and $\Delta \geq 2$, every $C$-expander satisfying $C\geq\Delta^{5\sqrt{\log n}}$ is $\cT(n,\Delta)$-universal.
\end{theorem}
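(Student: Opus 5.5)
Since this is an imported theorem, we simply invoke it; still, here is how I would reconstruct the argument. The approach is the Friedman--Pippenger extendability method, combined with a hierarchical decomposition of the tree. The rate $\Delta^{5\sqrt{\log n}}$ suggests about $\sqrt{\log n}$ levels of recursion, each costing a factor of order $\Delta^{5}$ in the expansion. The invariant throughout is that of a partial embedding $\phi$ of a subtree $T'\subseteq T$ which is \emph{$(d,m)$-extendable}: every set $X$ of size at most $m$ satisfies $|\Gamma_{G-\phi(T')}(X)|\ge (d-1)|X| - \sum_{x\in X\cap \phi(T')}(\deg_{T'}(x)-1)$ (or a similar deficiency count). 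The Friedman--Pippenger/Haxell lemma, with the Glebov--Johannsen--Krivelevich rollback, shows that under (E1) one can add a leaf and also roll back a leaf while preserving extendability. This embeds any tree of maximum degree $\Delta$ on at most $n - \Theta(n\Delta/C)$ vertices.

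To reach spanning size, I would first use the standard dichotomy (Krivelevich): $T$ has either $\Omega(n/k)$ leaves or $\Omega(n/k)$ vertex-disjoint bare paths of length $k$. In the leaf case, remove a linear set of leaves, embed the rest by extendability, and finish with a Hall-type argument. Hall's condition for leaves versus unused vertices follows from (E1) for small sets and (E2) for large sets. In the bare-path case, remove the paths, embed the remainder, and connect the endpoint pairs by vertex-disjoint paths covering exactly the leftover set. This is a spanning linkage problem in an expander, handled by absorbers or by Montgomery-type sorting networks built inside the extendable structure.

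The difficulty, and the source of the $\sqrt{\log n}$ exponent, is that a single round only embeds all but a $1/C$ fraction. Moreover, the leftover set need not itself expand well relative to the current embedding. I would therefore iterate: reserve random-like vertex sets $V_1\supset V_2\supset\cdots$ shrinking by a factor $\Delta^{O(\sqrt{\log n})}$ each time over $\sqrt{\log n}$ levels. At each level I would embed a further portion of $T$ while maintaining extendability into the remaining reservoir. Expansion $C$ must survive as a factor of $\Delta^{5}$ per level, giving the threshold $C\ge \Delta^{5\sqrt{\log n}}$. The main obstacle is exactly this bookkeeping: keeping (E1)-type expansion inside each successive reservoir while the connecting or leaf-matching step at the last level remains feasible. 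I would try to balance the level sizes $n\Delta^{-5i\sqrt{\log n}}$-style against $\log n$ so that the number of levels and the loss per level multiply to the stated bound.
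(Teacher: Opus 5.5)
Your proposal matches the paper: the paper gives no proof of this statement and imports it directly as \cite[Theorem~1.11]{HanYang2022}, exactly as you do. The reconstruction you append is a heuristic outline, not a proof, and should not be offered as justification. Its spanning step (the exact linkage or leaf matching) and the level-by-level bookkeeping meant to produce $\Delta^{5\sqrt{\log n}}$ are asserted rather than derived. As stated, reservoirs shrinking by $\Delta^{\Theta(\sqrt{\log n})}$ per level over $\Theta(\sqrt{\log n})$ levels would shrink by $n^{\Theta(\log \Delta)}$ in total, which can exceed $n$, so the sketch is not even self-consistent; the citation alone is the proof here.
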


For comparison with our rainbow tree embedding results, we also record the following theorem of Han and Yang~\cite{HanYang2022}


\begin{theorem}[{\cite[Theorem~1.5]{HanYang2022}}]
\label{thm:han-yang-pseudo}
Let $\Delta \geq 2$ be an integer and let $n$ be a sufficiently large integer. Then, any $n$-vertex $(p,\beta)$-bijumbled graph $F$ satisfying
$$
 \delta(F)\geq4\sqrt{\beta pn}
 \qquad \textrm{and} \qquad \beta\leq\frac{pn}{4\Delta^{5\sqrt{\log n}}},
$$
is $\cT(n,\Delta)$-universal.
\end{theorem}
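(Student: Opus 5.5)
Theorem~\ref{thm:han-yang-pseudo} is a result quoted from Han and Yang and not proved in this paper; I would derive it from Theorem~\ref{thm:han-yang} by checking that $F$ is a $C$-expander with $C:=\Delta^{5\sqrt{\log n}}$. Write $d:=\delta(F)\geq 4\sqrt{\beta pn}$ and assume $\beta\leq pn/(4C)$. The proof then splits into verifying (E1) and (E2) of Definition~\ref{def:expander}.

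\textbf{(E2).} Let $X,Y$ be disjoint with $|X|,|Y|\geq n/(2C)$, and suppose $e_F(X,Y)=0$. Bijumbledness gives $p|X||Y|\leq\beta\sqrt{|X||Y|}$, so $\sqrt{|X||Y|}\leq\beta/p$. On the other hand $\sqrt{|X||Y|}\geq n/(2C)$. Hence $\beta\geq pn/(2C)$, which contradicts $\beta\leq pn/(4C)$.

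\textbf{(E1).} Let $|X|<n/(2C)$ and put $Y:=\Gamma_F(X)$. Every neighbour of a vertex of $X$ lies in $X\cup Y$, so
$$
 d|X|\leq \evec_F(X,X\cup Y)\leq p|X|(|X|+|Y|)+\beta\sqrt{|X|(|X|+|Y|)}.
$$
Suppose for contradiction that $|Y|<C|X|$, so that $m:=|X|+|Y|<(C+1)|X|$. I would split into two regimes according to which term on the right dominates.
\begin{itemize}
\item If $pm\geq d/2$, then $m\geq d/(2p)$. Combining this with $m<(C+1)|X|<(C+1)n/(2C)\leq n$ is not yet a contradiction, so this regime needs more care. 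The fix is to apply (E2)-type discrepancy to $X$ versus $V(F)\setminus(X\cup Y)$, which has no edges to $X$. This gives $p|X|(n-m)\leq\beta\sqrt{|X|n}$, that is, $|X|\leq \beta^2n/(p^2(n-m)^2)$. Since $m<(C+1)n/(2C)\leq 3n/4$, we get $|X|\leq 16\beta^2/(p^2n)$. Then $m\leq(C+1)|X|\leq 32 C\beta^2/(p^2n)$, while $m\geq d/(2p)\geq 2\sqrt{\beta pn}/p$. Rearranging, this gives $\beta^{3/2}\geq p^{3/2}n^{3/2}/(16C)$, i.e.\ $\beta\geq pn/(16C)^{2/3}$. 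This contradicts $\beta\leq pn/(4C)$ when $C$ is large.
\item Otherwise $pm< d/2$, so $d|X|/2\leq\beta\sqrt{|X|m}$. This yields $d^2|X|\leq 4\beta^2 m< 4\beta^2(C+1)|X|$. Using $d^2\geq16\beta pn$, this becomes $4pn<\beta(C+1)$, again contradicting $\beta\leq pn/(4C)$.
\end{itemize}

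The main obstacle is making the constants in (E1) match exactly the stated $4\sqrt{\beta pn}$ and $4\Delta^{5\sqrt{\log n}}$. The expansion constant required by Theorem~\ref{thm:han-yang} is $C$ itself, and the crude estimates above lose constant factors against it. To close this gap I would expect either to apply Theorem~\ref{thm:han-yang} with a slightly smaller exponent, absorbing the constants because $\Delta^{\sqrt{\log n}}\to\infty$, or to follow Han--Yang's sharper bookkeeping directly. Finally, $\cT(n,\Delta)$-universality follows from Theorem~\ref{thm:han-yang}.
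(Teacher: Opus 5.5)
The paper does not prove this statement; it is quoted from Han--Yang, so there is no in-paper argument to compare against. Your reduction to Theorem~\ref{thm:han-yang}, which verifies (E1) and (E2) for $F$ directly from bijumbledness and the minimum-degree bound, is correct. It is also complete with the exact stated constants, so your closing paragraph about an ``obstacle'' is unwarranted:
\begin{itemize}
\item Your (E2) argument gives $\beta\geq pn/(2C)$, which contradicts $\beta\leq pn/(4C)$ outright.
\item In your second (E1) regime you obtain $\beta(C+1)>4pn$. But $\beta(C+1)\leq pn(C+1)/(4C)\leq pn/2$ for $C\geq1$, a contradiction.
\item In your first (E1) regime, $pn/(16C)^{2/3}\leq\beta\leq pn/(4C)$ forces $4C\leq(16C)^{2/3}$, that is, $64C^3\leq256C^2$, so $C\leq4$.
\end{itemize}
Hence $F$ is a $C$-expander with $C=\Delta^{5\sqrt{\log n}}$ itself as soon as $C>4$. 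This holds for all large $n$ because $\Delta\geq2$, and this is the only place where ``$n$ sufficiently large'' is used. No constant factor is lost anywhere.

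Your proposed fallback of applying Theorem~\ref{thm:han-yang} ``with a slightly smaller exponent'' would not have been legitimate. That theorem requires the expansion factor to be \emph{at least} $\Delta^{5\sqrt{\log n}}$. Had your estimates only produced, say, a $C/16$-expander, you could not have invoked it without reopening Han--Yang's proof. Fortunately this is not needed, and you should drop that paragraph and simply record the condition $C>4$.
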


For $(n,d,\lambda)$-graphs which, as previously noted, are $d$-regular $(d/n,\lambda)$-bijumbled graphs, Hyde, Morrison, M\"uyesser and Pavez-Sign\'e~\cite{HydeMorrisonMuyesserPavezSigne2025} attain $\cT(n,\Delta)$-universality with $\lambda\leq pn/[C_\Delta \log^3 n]$ which is better than the bound attained in Theorem~\ref{thm:han-yang-pseudo}. To the best of our  knowledge, the Hyde--Morrison--M\"uyesser--Pavez-Sign\'e bound on the discrepancy parameter is not known to hold for the more general class of bijumbled graphs.  

\medskip

Our proofs pertaining to clique factors rely on the
following result of Morris~\cite{Morris2025}.

\begin{theorem}
\emph{~\cite[Theorem~1.4]{Morris2025}}
\label{thm:morris}
For every integer $k\geq3$ and every $\zeta>0$, there exists
$\eta\coloneqq\eta(k,\zeta)>0$ such that every $n$-vertex
$(p,\beta)$-bijumbled graph $G$ with $p>0$ satisfying
$$
 \delta(G)\geq\zeta pn,\qquad
 \beta\leq\eta p^{k-1}n,\qquad \textrm{and} \qquad k\mid n
$$
admits a $K_k$-factor.
\end{theorem}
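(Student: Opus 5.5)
The plan is to use the absorbing method, with a counting lemma for cliques in bijumbled graphs as the basic tool. Throughout, the hypothesis $\beta\le\eta p^{k-1}n$ is exactly what lets clique embeddings run through $k-1$ rounds of common neighbourhoods, each shrinking a set by a factor about $p$, while staying above the discrepancy scale. The same hypothesis forces $p\ge n^{-c_k}$ for some $c_k>0$. This is because a $(p,\beta)$-bijumbled graph with $p\le 1/2$, say, has $\beta=\Omega(\sqrt{pn})$. Hence every constant-size structure we count occurs polynomially often.

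\emph{Step 1 (counting/embedding lemma).} For disjoint $X_1,\dots,X_k$ with $|X_i|\ge \varepsilon n$, we show that the number of copies of $K_k$ with one vertex in each $X_i$ is $(1\pm o_\eta(1))p^{\binom k2}\prod|X_i|$, or at least $\Omega(p^{\binom k2}\prod |X_i|)$. The argument is by induction on $k$. All but at most $O(\beta^2/(p^2|X_j|))$ vertices of $X_1$ have about $p|X_j|$ neighbours in each $X_j$. We then recurse inside these neighbourhoods, using the standard rooted/inheritance version in which we keep discrepancy control on sets of size $p^{i}\varepsilon n$. This is valid as long as $\beta\ll p^{i+1}\varepsilon n$, which holds for $i\le k-2$. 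A rooted variant gives copies of $K_{k-1}$ inside $N(v)$ for every vertex $v$. Here we use $\delta(G)\ge\zeta pn$ and $\beta\le\eta p^{k-1}n=\eta p^{k-2}\cdot(pn)$, so that $N(v)$ is large enough for the $(k-1)$-clique counting to apply. Thus every vertex lies in $\Omega(p^{\binom k2}n^{k-1})$ copies of $K_k$.

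\emph{Step 2 (absorbers).} For a $k$-set $S$ (or a single vertex $v$ together with a $(k-1)$-set), we define an absorber of constant size $a=a(k)$. This is a vertex set $A$ such that both $G[A]$ and $G[A\cup S]$ have $K_k$-factors, built from ``switchers'' that exchange one vertex of a clique for another. Using Step 1 with roots, we show that each vertex $v$ and each vertex $u$ have $\Omega(p^{b}n^{a})$ switchers. These are obtained by extending a $K_{k-1}$ in $N(v)\cap N(u)$-like structures through intermediate cliques. Because a single vertex only has $\zeta pn$ neighbours, we will not insist on common neighbourhoods of $u,v$. Instead we chain: find a $K_{k-1}$ in $N(v)$, then a path of switchings through cliques reaching $u$'s neighbourhood. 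The expansion from Step 1 gives enough chains of constant length. Since $p$ is polynomial in $n$, a random choice of each potential absorber with probability $\theta n^{1-a}p^{-b}$, followed by the removal of overlapping ones, yields a family $\mathcal A$ of $O(\theta n)$ disjoint absorbers. Chernoff's inequality then shows that every $k$-set still has $\Omega(\theta' n)$ absorbers in $\mathcal A$.

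\emph{Step 3 (almost cover and finish).} In $G-V(\mathcal A)$, we greedily take vertex-disjoint $K_k$'s via Step 1 while at least $\varepsilon n$ vertices remain. Vertices of the leftover whose degree into $V(\mathcal A)$ is low are prevented in advance: we first cover a reservoir-based set of exceptional vertices using the rooted $K_{k-1}$'s from Step 1. We then partition the leftover into $k$-sets (using $k\mid n$) and absorb them one by one. The main obstacle is Step 2, namely guaranteeing absorbers for \emph{every} vertex despite degrees being only $\zeta pn$ with $\zeta$ arbitrarily small and $\beta$ only $\eta p^{k-1}n$. The absorber count must be driven entirely by rooted counting within neighbourhoods of size $\Theta(pn)$. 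My first attempt would be to enlarge the switching chains until the constant-length reachability argument based on expansion closes the gap.
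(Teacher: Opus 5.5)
The paper contains no proof of this statement. It is imported verbatim as Theorem~1.4 of Morris~\cite{Morris2025} and used only as a black box, in Section~\ref{sec:limitations} and in the proof of Theorem~\ref{thm:spread-clique-factor}, so your proposal has to stand on its own. Your Step~1 is the standard counting lemma and is fine: counting $K_j$ inside an $m$-set needs $\beta\ll p^{j-1}m$, so $K_{k-1}$'s inside $N(v)$ are available. Your density floor $p\gtrsim n^{-1/(2k-3)}$ is also correct. Steps~2--3, however, contain two genuine gaps.

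\textbf{First gap: the absorber selection is impossible when $p=o(1)$.} You want a family $\mathcal A$ of pairwise disjoint absorbers of constant size $a$ such that every $k$-set keeps $\Omega(\theta'n)$ absorbers in $\mathcal A$, and you then absorb an arbitrary partition of an $\varepsilon n$-vertex leftover. This Rödl--Ruciński--Szemerédi-type property cannot hold for any gadget.
\begin{itemize}
\item If $A$ absorbs a $k$-set $S$ that is not a clique, then every clique of the $K_k$-factor of $G[A\cup S]$ meeting $S$ also meets $A$. Hence $S\subseteq\bigcup_{x\in A}N(x)$.
\item Applying bijumbledness to $X=\{v:\deg(v)\geq 2pn\}$ and $Y=V(G)$ gives $|X|\leq\beta^2/(p^2n)\leq\eta^2p^{2k-4}n$.
\item By disjointness, at most $|X|$ members of $\mathcal A$ meet $X$. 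Each other member absorbs at most $\binom{2apn}{k}$ non-clique $k$-sets.
\item There are $(1-o(1))\binom nk$ non-clique $k$-sets. Double counting then shows that such a $k$-set has on average only $O(p^kn)+O(\eta^2p^{2k-4}n)=o(n)$ absorbers in $\mathcal A$.
\end{itemize}
This contradicts your Chernoff claim. Each constant-size absorber sees only $O(pn)$ vertices, so in the sparse regime you need an architecture in which absorbers are responsible only for sets prescribed in advance. An example is a template-based (distributive) absorber, with the leftover first covered into a reserved flexible set. Nothing of this kind appears in your plan.

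\textbf{Second gap: the core difficulty is left open.} The real content of the theorem is producing enough rooted absorbing gadgets at \emph{every} vertex under $\beta\leq\eta p^{k-1}n$, and your plan stops at ``my first attempt would be\dots''.
\begin{itemize}
\item The natural switcher for $u,v$ is a common $K_{k-1}$ in $N(u)\cap N(v)$. For a fixed pair, bijumbledness does not even make this set non-empty. On sets of typical size $p^2n$, finding $K_{k-1}$'s would require $\beta\ll p^kn$.
\item Chaining does not remove the problem. Any chain argument needs, for each fixed $v$, a linear-size set of vertices reachable from $v$ in one step. That is, it needs single-rooted counts of $K_{k+1}^-$-type gadgets.
\item Running the neighbourhood embedding of your Step~1 for these gadgets, you must control edges between a candidate set of size about $\zeta p^{k-1}n$ inside $N(v)$ and a common neighbourhood of size about $p^{k-2}n$. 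This requires $\beta\ll p^{k-1/2}n$, beyond the hypothesis.
\end{itemize}
Overcoming exactly this loss at the optimal scale is what Morris's theorem accomplishes, so as written the proposal does not prove the statement.
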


Chen--Chen--Han--Zhao~\cite[Theorems~1.4 and~3.1]{ChenChenHanZhao2025}
establish uncoloured robustness for bijumbled graphs.
Given a graph $G$, write $G_\rho$ to denote its percolation at rate $\rho$, indicating the random spanning subgraph of $G$ obtained by retaining each edge independently with probability $\rho$. For fixed
$\alpha,\eta\in(0,1]$, sufficiently small
$\varepsilon := \varepsilon(\alpha,\eta)>0$, and an $n$-vertex
$(p,\beta)$-bijumbled graph $G$ satisfying
$$
 \delta(G)\geq\alpha pn,\qquad
 \beta\leq\varepsilon pn,\qquad \textrm{and} \qquad
 \frac{(1+\eta)\log n}{\alpha pn} \leq \rho \leq 1,
$$
their expansion argument implies that $G_\rho$ is a.a.s. Hamiltonian; for even $n$ it thus also admits a perfect matching. Their balanced bipartite perfect-matching counterpart~\cite[Theorem~1.5]{ChenChenHanZhao2025} uses the same conditions, with $n$ denoting the size of each part.

For triangle factors, Theorem~1.6 in~\cite{ChenChenHanZhao2025} applies to $(n,d,\lambda)$-graphs with
$$
 3\mid n,\qquad
 d\geq Cn^{5/6}(\log n)^{1/2},\qquad \textrm{and} \qquad
 \lambda\leq\varepsilon d^2/n,
$$
where $\varepsilon>0$ is sufficiently small and
$C=C(\varepsilon)$ sufficiently large; this theorem then asserts that  $G_\rho$ a.a.s. admits a triangle factor whenever
$\rho d\gg n^{1/3}(\log n)^{1/3}$.
Their proof combines iterative absorption, an $O(n/d^3)$-spread
measure on triangle factors, represented as collections of triangles,
and a sparse coupling lemma~\cite[Lemma~5.2 and Section~6]{ChenChenHanZhao2025}. The ground-set elements of this measure are whole triangles; in contrast, our rainbow problem assigns colours to individual graph edges.

\subsection{Our rainbow embedding results with sublinear palette surplus}\label{sec:sublinear}

Our companion work~\cite{AignerHorevHefetzPersonTrushkin2026}
employs a McDiarmid-type~\cite{McDiarmid1981} palette coupling, stated in Lemma~\ref{lem:coupling} below, in order 
to prove a general transference result that yields rainbow spanning configurations in uniformly coloured hosts with a small positive linear
palette surplus. Our first sequence of results reduces the surplus to $o(n)$ for various specific configurations of interest, while retaining the discrepancy scales of the deterministic inputs. These are 
$\beta = O(pn)$ for perfect matchings and Hamilton cycles, and
$\beta = O(pn/L_{n,\Delta})$ for prescribed spanning trees.
We prove all three results using quantitative estimates for the
uncoloured percolations selected by the coupling lemma, namely Lemma~\ref{lem:coupling}.



At Morris' discrepancy scale, the same coupling and sparsification
argument does not establish a sublinear surplus $K_k$-factor result
for $k\geq3$. Section~\ref{sec:limitations} identifies the quantitative
restriction of this argument and explains how stronger host assumptions
can overcome it.

\medskip
Our first sequence of results reads as follows. 

\begin{theorem}[Rainbow perfect matchings with sublinear palette surplus]
\label{thm:sublinear-surplus-perfect-matching}
For every $\alpha,\eta>0$, there exists a constant
$c:=c(\alpha,\eta)>0$ such that the following holds. 
Let $n$ range over the even positive integers, and let
$0<p:=p(n)\leq1$ satisfy $pn=\omega(\log n)$.
Let $G$ be an $n$-vertex $(p,\beta)$-bijumbled graph satisfying
$\delta(G)\geq\alpha pn$ and $\beta\leq cpn$.
Then, a $Q$-uniform colouring of $G$, where
$$
 Q\geq\frac n2+
 \left(\frac2\alpha+\eta+o(1)\right)\frac{\log n}{p},
$$
a.a.s. admits a rainbow perfect matching.
\end{theorem}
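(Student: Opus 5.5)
The plan is to reduce the rainbow problem to an uncoloured percolation problem via the McDiarmid-type palette coupling (Lemma~\ref{lem:coupling}), and then to invoke the percolation robustness of bijumbled graphs due to Chen--Chen--Han--Zhao~\cite{ChenChenHanZhao2025}. Write $h:=n/2$ for the number of edges of a perfect matching and $s:=Q-h$ for the palette surplus, so $s\geq(2/\alpha+\eta+o(1))(\log n)/p$. I expect Lemma~\ref{lem:coupling} to say roughly the following: when $G$ is $Q$-uniformly coloured, the probability that $G$ contains a rainbow copy of an $h$-edge configuration is at least the probability that a percolation $G_\rho$ contains an (uncoloured) copy. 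Here $\rho$ is of order $s/Q$, up to lower-order corrections. The intuition is that in a sequential embedding, while the partial copy uses fewer than $h$ colours, each edge independently receives an unused colour with probability at least $s/Q$.

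\textbf{Step 1.} Apply Lemma~\ref{lem:coupling} with the perfect matching as target and determine the resulting rate $\rho$. Since $Q=(1+o(1))n/2$ (because $pn=\omega(\log n)$ makes $s=o(n)$), I expect $\rho pn$ to be of order $s p$. The constant $2/\alpha+\eta$ in the statement should be calibrated exactly so that
$$
 \rho\,\alpha pn\geq(1+\eta')\log n
$$
for some $\eta'=\eta'(\alpha,\eta)>0$ and all large $n$. This is precisely the minimum-degree threshold $\delta(G)\rho\approx\log n$ needed to avoid isolated vertices in $G_\rho$, which is why the constant $2/\alpha$ appears. I also need $\rho\leq1$, which follows from $pn=\omega(\log n)$.

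\textbf{Step 2.} Choose $c(\alpha,\eta)\leq\varepsilon(\alpha,\eta')$, where $\varepsilon$ is the constant in the result of Chen--Chen--Han--Zhao recalled above. Then $G$ satisfies $\delta(G)\geq\alpha pn$, $\beta\leq\varepsilon pn$ and $(1+\eta')\log n/(\alpha pn)\leq\rho\leq1$. Consequently $G_\rho$ is a.a.s.\ Hamiltonian, and since $n$ is even, taking every other edge of the Hamilton cycle gives a perfect matching. Combining this with Step~1 then gives a rainbow perfect matching a.a.s.

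\textbf{Main obstacle.} The delicate point is Step~1: pinning down the precise form of Lemma~\ref{lem:coupling} so that the retention rate it produces is at least $(1+\eta')\log n/(\alpha pn)$ with the stated constant $2/\alpha+\eta$, and not a constant-factor loss of it. This requires careful bookkeeping of the ratio $s/Q$, absorbing the $o(1)$ terms into $\eta$. If the coupling only yields a monotone comparison with a percolation of a slightly different rate, I would use the slack $\eta$ to pass to a smaller $\eta'$. Monotonicity of the property ``contains a perfect matching'' under adding edges then allows the comparison to go in the right direction.
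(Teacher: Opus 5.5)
Your proposal is correct, but it takes a different route from the paper.

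\textbf{Your route.} You apply Lemma~\ref{lem:coupling} to $G$ itself. You then treat the uncoloured question as a black box, invoking the Chen--Chen--Han--Zhao percolation Hamiltonicity result recorded in the introduction, and take alternate edges of the Hamilton cycle.

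\textbf{The paper's route.} It first uses Lemma~\ref{lem:balanced-bipartition} to pass to a balanced bipartite spanning subgraph $H=G[U,W]$ with $\delta(H)\geq\lambda pm$, where $m=n/2$. It applies the coupling to $H$. It then verifies Hall's condition in $H_\rho$ by a direct union bound (Lemma~\ref{lem:percolated-Hall}), which requires $-pm\log(1-\rho)\geq K\log m$ with $\lambda K>2$.

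\textbf{Comparison.} The paper's argument is self-contained and elementary. Its constant $2/\alpha$ is an artefact of that argument, not the isolated-vertex threshold: the bipartition roughly halves degrees, and Hall witnesses are counted by $\binom mt\binom m{t-1}\approx m^{2t}$. Your route delegates the hard step to a much deeper theorem, but in return it has ample slack.

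\textbf{The miscalculation.} Since $\rho=(s+1)/Q=1-(h-1)/Q$ is increasing in $Q$, it suffices to take the smallest admissible $Q$. There $\rho=(1+o(1))\,2s/n$, so
$$
 \rho\alpha pn\geq(1+o(1))(4+2\alpha\eta)\log n,
$$
which is four times the threshold $(1+\eta')\log n$. Your claim that $2/\alpha+\eta$ is ``calibrated exactly'' to that threshold is therefore a miscalculation. It is harmless because it errs on the safe side. The ``main obstacle'' you anticipate does not arise, and your argument would in fact give a surplus of about $\bigl(\frac1{2\alpha}+\eta\bigr)\frac{\log n}{p}$.

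\textbf{A small technicality.} The quoted result assumes $\alpha\in(0,1]$. If $\alpha>1$, replace $\alpha$ by $1$. This costs nothing, because bijumbledness applied to $X=Y=V(G)$ forces $\alpha\leq1+c$, so the slack above still suffices.
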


\begin{theorem}[Rainbow prescribed spanning trees with sublinear palette surplus]
\label{thm:sublinear-surplus-tree}
For every $\alpha>0$ and every integer $\Delta\geq2$, there exist
constants $c:=c(\alpha)>0$ and $\kappa \coloneqq \kappa(\alpha) >0$ such that the following holds. Let $L_{n,\Delta} = \Delta^{5\sqrt{\log n}}$ and let $G$ be an $n$-vertex $(p,\beta)$-bijumbled graph satisfying 
$$
\delta(G) \geq \alpha pn \qquad \textrm{and} \qquad \beta\leq c\frac{pn}{4L_{n,\Delta}}.
$$
If $G$ is $Q$-uniformly coloured, where 
$$
 Q \geq (n-1) + \left(\frac{1}{\kappa} +o(1)\right)\frac{L_{n,\Delta} \log n}{p},
$$
then
$$
    \sup_{T\in\cT(n,\Delta)}
    \PP\left[
        G\text{ admits no rainbow copy of }T
    \right]
    =
    o(1).
$$
\end{theorem}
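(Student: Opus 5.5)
The plan is to reduce the rainbow statement to an uncoloured expansion statement for a sparse random subgraph of $G$, and then apply Theorem~\ref{thm:han-yang}. By the McDiarmid-type palette coupling (Lemma~\ref{lem:coupling}), for $h=n-1$ edges and palette size $Q$, the probability that a $Q$-uniform colouring of $G$ has no rainbow copy of $T$ is at most the probability that $G_\rho$ contains no copy of $T$. Here $\rho$ is of order the relative surplus, i.e. $\rho \approx (Q-h)/n$ up to $1+o(1)$ factors. With the stated $Q$ this gives
$$\rho pn \geq (1/\kappa+o(1))\,L_{n,\Delta}\log n,$$
where $\kappa=\kappa(\alpha)$ is a small constant fixed at the end.

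Theorem~\ref{thm:han-yang} gives $\cT(n,\Delta)$-universality of every $L_{n,\Delta}$-expander. So it suffices to show that $G_\rho$ is a.a.s.\ an $L$-expander with $L:=L_{n,\Delta}$. Since this event does not depend on $T$, the bound is automatically uniform over $T\in\cT(n,\Delta)$, which gives the supremum in the statement.

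\textbf{(E2).} Take disjoint $X,Y$ with $|X|=|Y|=m:=n/(2L)$; larger sets contain such subsets. Bijumbledness with $\beta\leq cpn/(4L)$ gives
$$e_G(X,Y)\geq pm^2-\beta m\geq pm^2/2,$$
so $\PP[e_{G_\rho}(X,Y)=0]\leq \exp(-\rho pm^2/2)$. The number of pairs is at most
$$\binom{n}{m}^2\leq \exp\bigl(2m\log(2eL)\bigr).$$
The union bound therefore succeeds if $\rho p m\gg \log L$, i.e.\ $\rho pn\gg L\log L$. This holds since $\log L=5\sqrt{\log n}\,\log\Delta=o(\log n)$ while $\rho pn\gtrsim L\log n/\kappa$.

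\textbf{(E1), first step.} Chernoff's bound and a union bound over vertices give that every vertex has degree at least $d:=\rho\alpha pn/2$ in $G_\rho$. This uses $\rho pn\gg\log n$, with $\kappa$ small enough relative to $\alpha$ to absorb the constants.

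\textbf{(E1), small sets.} Take $X$ with $|X|\leq \varepsilon n/L$ and suppose $|\Gamma_{G_\rho}(X)|<L|X|$. Then $Y:=X\cup\Gamma_{G_\rho}(X)$ satisfies $|Y|\leq (L+1)|X|$ and
$$\ee_{G_\rho}(X,Y)\geq d|X|.$$
In $G$ we have
$$\ee_G(X,Y)\leq p|X||Y|+\beta\sqrt{|X||Y|}\leq \bigl(\varepsilon+c/\sqrt L\bigr)pn|X|.$$
Thus the expectation in $G_\rho$ is at most a small fraction of $d|X|$. A Chernoff bound with large deviation ratio gives failure probability $\exp(-\Omega(d|X|))$, roughly $\exp(-\Omega(L\log n\,|X|/\kappa))$. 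This beats the $\binom{n}{|X|}\binom{n}{L|X|}\leq \exp(2L|X|\log n)$ choices of $(X,Y)$ once $\kappa$ is small in terms of $\alpha$.

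\textbf{(E1), medium sets.} For $\varepsilon n/L\leq|X|<n/(2L)$, if expansion fails then $X$ has no $G_\rho$-edges to a set $Z=V\setminus(X\cup\Gamma(X))$ of size at least $n/3$. The same argument as for (E2), with sizes $|X|$ and $n/3$, rules this out by a union bound, again using $\rho pn\gg L\log L/\varepsilon$.

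\textbf{Main obstacle.} The delicate step is the small-set range of (E1). The union bound there is over pairs $(X,Y)$ with $|Y|\approx L|X|$, which costs about $L|X|\log n$ in the exponent. This is exactly why the surplus must be of order $L\log n/p$ and cannot be $\log n/p$. The constants $\varepsilon$, $c$ and $\kappa$ must be balanced so that the Chernoff exponent $\Theta(\rho\alpha pn|X|)$ dominates this cost. For very small $|X|$ I would first try a cruder bound, using only the minimum degree in $G_\rho$ together with the fact that $G$ has few edges inside sets of size $O(L)$, in case the general estimate is too weak there.
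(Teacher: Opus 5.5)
Your architecture matches the paper's. You apply Lemma~\ref{lem:coupling} with $S=\cK$ and $\ell=Q$, show that $G_\rho$ is a.a.s.\ an $L_{n,\Delta}$-expander, and conclude with Theorem~\ref{thm:han-yang}. Uniformity over $T$ holds because the expansion event does not depend on $T$. Beyond this, there is one missing step and one genuine difference in how you prove expansion.

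\textbf{The missing step.} You claim that the stated $Q$ gives $\rho pn\ge(1/\kappa+o(1))L\log n$, where $L=L_{n,\Delta}$. This uses $\rho=(s+1)/(n-1+s)\approx s/n$ for the surplus $s=Q-(n-1)$. That approximation needs $s=o(n)$, i.e.\ $pn\gg L\log n$, and you never justify it. Monotonicity in $Q$ cannot substitute for it, since $\rho\le1$ caps $\rho pn$ at $pn$, and your Chernoff-type estimates all scale with $\rho pn$. The missing input is Observation~\ref{obs:density-floor}. Combining $\beta\ge(1-p)\sqrt{\alpha pn}$ with $\beta\le cpn/(4L)$ forces $pn\ge4\alpha L^2/c^2$ when $p\le1/2$, and $pn>n/2$ otherwise. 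Hence $pn/(L(\log n)^2)\to\infty$ and $s=o(n/\log n)$. The paper avoids approximating $\rho$ at all. It uses $1-\rho=(n-2)/Q$ together with the retention hypothesis $-\kappa pn\log(1-\rho)\ge(R+3)\log n$ of Lemma~\ref{lem:percolated-expansion}. So every $Q\ge n-1+s_T$ works, with $s_T=\lceil(n-2)e^{x_n}-(n-1)\rceil$. The density floor is then needed only to show $s_T=(1/\kappa+o(1))L\log n/p$.

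\textbf{The different route.} Your expansion argument re-derives, by a longer route, what Lemma~\ref{lem:percolated-expansion} already provides; you could simply cite it. The paper handles all of (E1) in one union bound. If $X$ fails to expand, there is $Y$ with $|Y|=R|X|$ and no $G_\rho$-edge between $X$ and $Z=V\setminus(X\cup Y)$. The deterministic bound $e_G(X,Z)\ge\kappa pn|X|$ comes from minimum degree plus bijumbledness when $|X|\le\alpha n/(4(R+1))$, and from bijumbledness alone above that. This gives failure probability $(1-\rho)^{\kappa pn|X|}$ against $n^{(R+1)|X|}$ choices, with the explicit constant $\kappa=\min\{\alpha/2,1/8\}$.

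Your scheme instead runs a Chernoff minimum-degree step for $G_\rho$, then an upper-tail bound on $\ee_{G_\rho}(X,X\cup\Gamma_{G_\rho}(X))$ for small $X$, then an empty-bipartite bound for medium $X$. It relies on the same deterministic fact: $G$ has few edges from $X$ into a set of size $O(L|X|)$. It is valid, with one care point: $\ee$ double-counts edges inside $X$, so apply Chernoff to the binomial count of edges of $G_\rho[Y]$ meeting $X$, with half the threshold. The cost is an extra concentration step and a $\kappa$ that depends on your auxiliary $\varepsilon$. That is permitted, since the statement only asks for some $\kappa(\alpha)$.

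Your closing worry about very small $|X|$ is unnecessary. Your bound on $\ee_G(X,Y)$ holds uniformly for $1\le|X|\le\varepsilon n/L$, because it only uses $(L+1)|X|\le\varepsilon n(1+1/L)$ and $\beta\sqrt{(L+1)}\,|X|\le c\,pn|X|/\sqrt L$.
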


\begin{remark}
\label{rem:not-simultaneous-trees-short}
In Theorem~\ref{thm:sublinear-surplus-tree} (and similarly in Theorem~\ref{thm:exact-trees} below) the failure probability tends to 0 uniformly over $\cT(n,\Delta)$; this does not imply simultaneous rainbow universality.
\end{remark}

\begin{theorem}[Rainbow Hamilton cycles with sublinear palette surplus]
\label{thm:sublinear-surplus-hamilton}
For every $\alpha>0$, there exist constants
$A\coloneqq A(\alpha)>0$ and $c\coloneqq c(\alpha)>0$
such that the following holds. Let $0<p:=p(n)\leq1$ satisfy
$pn=\omega(\log n)$, and let $G$ be an $n$-vertex
$(p,\beta)$-bijumbled graph satisfying $\delta(G)\geq\alpha pn$
and $\beta\leq cpn$. Then, a $Q$-uniform colouring of $G$ using 
$$
Q \geq n + (A+o(1))\frac{\log n}{p}
$$
colours a.a.s. admits a rainbow Hamilton cycle. 
\end{theorem}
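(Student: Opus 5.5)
We will prove Theorem~\ref{thm:sublinear-surplus-hamilton} by combining the McDiarmid-type palette coupling (Lemma~\ref{lem:coupling}) with Theorem~\ref{thm:expander-hamilton}. The form of the coupling we will use is as follows. A $Q$-uniform colouring of $G$ with $Q=n+s$ contains a rainbow copy of some Hamilton cycle with probability at least the probability that the random subgraph $G_\rho$ is Hamiltonian, where $\rho$ is a retention rate of order $s/Q$. Concretely, one reveals colours so that each edge independently "survives" with probability $\rho\approx s/Q$, and any fixed uncoloured Hamilton cycle in the percolated graph can be made rainbow. This works because the surplus absorbs the coupon-collector deficit.

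In our situation we take $\rho := A'\log n/(pn)$, with $A'$ a large constant depending on $\alpha$. This corresponds to a surplus $s$ of order $\rho Q\approx A'\log n/p$, which matches the claimed $(A+o(1))\log n/p$. Since $pn=\omega(\log n)$, we have $\rho=o(1)$, so the surplus is sublinear.

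The remaining work is deterministic-plus-probabilistic. We must show that $G_\rho$ is a.a.s. a $C$-expander for the constant $C$ of Theorem~\ref{thm:expander-hamilton}.

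\textbf{Step 1: minimum degree.} Each vertex has expected degree at least $\alpha\rho pn=\alpha A'\log n$ in $G_\rho$. By Chernoff bounds and a union bound, every vertex has degree at least $\alpha A'\log n/2$ once $A'$ is large in terms of $\alpha$. This is exactly where the dependence $A=A(\alpha)$ enters.

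\textbf{Step 2: (E2).} For disjoint $X,Y$ of size at least $n/(2C)$, bijumbledness with $\beta\le cpn$ gives $e_G(X,Y)\ge p|X||Y|-\beta\sqrt{|X||Y|}\ge p|X||Y|/2$. The probability that no such edge survives is at most $\exp(-\rho pn^2/(8C^2))=\exp(-\Omega(n\log n))$. This beats the $4^n$ choices of the pair $(X,Y)$.

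\textbf{Step 3: (E1).} This is the main obstacle, and we split it by the size of $X$.

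\emph{Small sets.} For $|X|\le \varepsilon n/(\log n)$, a set with $|\Gamma_{G_\rho}(X)|<C|X|$ spans a set $X\cup\Gamma$ of size $(C+1)|X|$ containing at least $|X|\cdot\alpha A'\log n/4$ edges. We will rule this out by showing that $G_\rho$ has no small dense sets. The expected number of edges of $G$ inside a set $U$ is at most $p|U|^2+\beta|U|$. Hence a union bound over $U$ and over edge subsets gives probability $\binom{n}{u}\binom{p u^2+\beta u}{m}\rho^m$. This is small when $m\ge D u\log n$-ish, because $\rho(pu^2+\beta u)\le A'\log n(u^2/n+cu)$. The term $\rho\beta u\approx cA'u\log n$ is dangerous: it must be beaten by the degree $\alpha A' \log n/4$ per vertex, forcing $c\ll\alpha/C$. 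This is where we choose $c=c(\alpha)$.

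\emph{Medium sets.} For $\varepsilon n/\log n\le|X|<n/(2C)$, we argue directly in $G$. Bijumbledness gives that $X$ has at least $|X|\alpha pn/2$ edges into $V\setminus(X\cup Z)$ for any $Z$ with $|Z|<C|X|$, provided $c$ is small. Here we use $e_G(X,Z)\le p|X||Z|+\beta\sqrt{|X||Z|}$ together with $\delta(G)\ge\alpha pn$; this requires $C|X|p\ll\alpha pn$, and that is where the restriction $|X|<n/(2C)$ combines with choosing $C$ relative to $\alpha$. The edges between $X$ and the complement of $X\cup Z$ must all be deleted, which happens with probability $\exp(-\rho\alpha pn|X|/2)=\exp(-\Omega(A'|X|\log n))$. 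This beats the $\binom{n}{|X|}\binom{n}{C|X|}\le\exp(O(C|X|\log n))$ choices of $(X,Z)$ once $A'\gg C/\alpha$.

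Putting the three ranges together, $G_\rho$ is a.a.s. a $C$-expander, so it is Hamiltonian by Theorem~\ref{thm:expander-hamilton}. Through the coupling, the $Q$-uniformly coloured $G$ then a.a.s. contains a rainbow Hamilton cycle.
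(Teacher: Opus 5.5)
\textbf{Verdict: there is a gap in your (E1) argument, though it is easily repaired.} The architecture matches the paper: Lemma~\ref{lem:coupling} with the full palette, a.a.s.\ expansion of $G_\rho$, then Theorem~\ref{thm:expander-hamilton}. Your (E2) step is also essentially the paper's.

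\textbf{Where it fails.} The problem is the ``medium sets'' part of (E1). Degree counting gives
\[
e_G\bigl(X,V\setminus(X\cup Z)\bigr)\geq \alpha pn|X|-p(C+1)|X|^2-\beta(1+\sqrt C)|X|.
\]
This is at least $\alpha pn|X|/2$ only when $p(C+1)|X|\lesssim \alpha pn/4$, that is, $|X|\lesssim \alpha n/(4(C+1))$. Your range runs up to $|X|<n/(2C)$. For $|X|$ near $n/(2C)$ the right-hand side is about $pn|X|\bigl(\alpha-\tfrac{C+1}{2C}\bigr)$, which is negative whenever $\alpha\leq 1/2$. Choosing $C$ ``relative to $\alpha$'' cannot rescue this: the restriction $|X|<n/(2C)$ only gives $C|X|<n/2$, whatever $C$ is, and $C$ is in any case forced to be large by Theorem~\ref{thm:expander-hamilton}. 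So your claimed $\alpha pn|X|/2$ crossing edges is unjustified on the top range $\alpha n/(4(C+1))<|X|<n/(2C)$, and (E1) is not established there.

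\textbf{The missing step.} On that top range, drop degree counting and apply the discrepancy inequality directly to the pair $(X,W)$, where $W=V\setminus(X\cup Z)$. Here $|W|\geq n-(C+1)|X|\geq n/4$ and $|X||W|\geq \alpha n^2/(16(C+1))$. So if $c\leq \tfrac18\sqrt{\alpha/(C+1)}$, then $\beta\leq cpn$ gives
\[
e_G(X,W)\geq \tfrac12 p|X||W|\geq \tfrac18 pn|X|.
\]
This is exactly the two-regime estimate in the proof of Lemma~\ref{lem:percolated-expansion}. It yields $e_G(X,W)\geq\kappa pn|X|$ with $\kappa=\min\{\alpha/2,1/8\}$ for \emph{every} $|X|<n/(2C)$. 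Your union bound then needs $A'\kappa\geq C+3$, so $A'$ must be large compared with $C/\kappa$ rather than $C/\alpha$; this matches the paper's $A=(R_0+3)/\kappa$.

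\textbf{What becomes unnecessary.} Once this bound holds on the whole range, one union bound settles (E1) outright. It sets at most $n^{(C+1)|X|}$ choices of $(X,Z)$ against a failure probability $(1-\rho)^{\kappa pn|X|}$ for each. Your Chernoff minimum-degree step and the ``no small dense sets'' argument are therefore superfluous, though they appear correct. The paper uses only this pair union bound, packaged as Corollary~\ref{lem:hamilton-percolation}.

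\textbf{A smaller point.} The coupling fixes the rate $\rho=(Q-n+1)/Q$; you do not get to choose it. You should check that this rate is at least $A'\log n/(pn)$ once $Q\geq n+(A'+o(1))\log n/p$, and then invoke monotonicity of Hamiltonicity.
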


\subsection{Our exact-palette percolated rainbow embedding results}\label{sec:exact}

Our second sequence of results uses exact palettes: $n/2$ colours for a perfect matching, $(k-1)n/2$ for a $K_k$-factor, $n-1$ for a prescribed spanning tree, and $n$ for a Hamilton cycle. Each conclusion holds in a uniformly coloured $(p, \beta)$-bijumbled graph that is then further percolated with retention probability $\rho$.


For matchings and Hamilton cycles, the retained density condition is
$\rho pn\geq C(\log n)^2$, whilst the host discrepancy remains
$\beta\leq\gamma pn$. For prescribed bounded-degree trees, the corresponding condition is $\rho pn\geq C L_{n,\Delta}(\log n)^2$, with
$\beta\leq\gamma pn/L_{n,\Delta}$. Clique factors require the additional
factor $(\rho/\log n)^{k-2}$ in the bound on $\beta$; its role is
explained after the statement of Theorem~\ref{thm:spread-clique-factor}.
These are sufficient quantitative conditions, and no optimality of
their logarithmic factors is claimed.

\medskip

Our second sequence of results reads as follows. 

\begin{theorem}[Percolated rainbow perfect matchings]
\label{thm:exact-matching}
For every $c>0$, there exist constants $C\coloneqq C(c)>0$ and
$\gamma\coloneqq\gamma(c)>0$ such that the following holds.
Let $G$ be an $n$-vertex $(p,\beta)$-bijumbled graph and let $\rho \coloneqq \rho(n) \in(0,1]$ satisfy 
$$
 \delta(G)\geq cpn,\qquad
 \beta\leq\gamma pn,\qquad
 \rho pn\geq C(\log n)^2,\qquad \textrm{and} \qquad 2\mid n.
$$
Then, following an $[n/2]$-uniform colouring of $G$, the percolation $G_\rho$ a.a.s. admits a rainbow perfect matching.
\end{theorem}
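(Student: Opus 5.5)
The plan is to deduce Theorem~\ref{thm:exact-matching} from the rainbow spread theorem of Han and Yuan~\cite[Theorem~3]{HanYuan2025}, as in the second row of Table~\ref{tab:spread-thresholds}. The ground set is $\Omega=E(G)$ and the family $\cA$ consists of the edge sets of perfect matchings of $G$; each has exactly $h=n/2$ elements, and the palette is exact. Percolating at rate $\rho$ and colouring from $[n/2]$ then a.a.s.\ yields a rainbow member of $\cA$, provided $\cA$ supports a $q$-spread measure with $\rho\geq C_0 q\log h$. It therefore suffices to construct a $q$-spread probability measure on perfect matchings of $G$ with
\[
q\le \frac{K\log n}{pn},
\]
where $K=K(c)$. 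Then $\rho\ge C(\log n)^2/(pn)\ge C_0 q\log n$ once $C\ge C_0K$.

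To build the measure I would use the standard ``random matching via uniform choice'' heuristic, made rigorous through counting. Take $\mu$ to be uniform on all perfect matchings of $G$. Then $\mu(\langle S\rangle)=\Phi(G-V(S))/\Phi(G)$, where $\Phi$ denotes the number of perfect matchings and $S$ is a matching with $s$ edges. It suffices to prove
\[
\frac{\Phi(G-V(S))}{\Phi(G)}\le \Bigl(\frac{K\log n}{pn}\Bigr)^{s}.
\]
Expected matching counts suggest $\Phi$ behaves like $(pn/e)^{n/2}$ up to $e^{O(n)}$ factors, which is too coarse for a ratio. So I would instead use a sequential (edge-by-edge) comparison. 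Removing one edge $uv$ at a time, I need
\[
\Phi(G'-u-v)\le \frac{K\log n}{pn}\,\Phi(G')
\]
for every induced subgraph $G'$ arising during the process. Equivalently, the edge $uv$ lies in at most a $K\log n/(pn)$ fraction of the perfect matchings of $G'$.

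To prove this, I would use a switching argument. From a perfect matching $M$ of $G'$ containing $uv$, produce many perfect matchings avoiding $uv$ by alternating $4$-cycles: choose $xy\in M$ with $ux,vy\in E(G')$ and replace $uv,xy$ by $ux,vy$. Bijumbledness together with $\delta(G)\ge cpn$ should give $\Omega(pn)$ such switches for typical $M$. Each target matching arises from $O(1)$ sources per pair, which yields the ratio bound $O(1/(pn))$.

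The hard part is degeneracy. When $G'$ is small (after removing $s$ edges, with $s$ up to $n/2$), minimum degree and discrepancy are no longer relative to $|G'|$. A fix is to apply spread only to sets $S$ of size at most $n/4$, say, and to build the measure so that this suffices. This is where the $\log n$ loss comes in. I would follow the Pham--Sah--Sawhney--Simkin scheme: pick a random vertex partition into $O(\log n)$ levels of geometrically decreasing sizes, and form a measure by matching within nested bijumbled pieces. Each restriction is still $(p,\beta)$-bijumbled with minimum degree $\ge cp|X|/2$ by Chernoff (using $pn=\omega(\log n)$), and $\beta\le\gamma pn$ controls the pieces down to size about $n/\log n$. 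The switching bound then holds inside each piece, costing a factor $\log n$ in $q$.

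The main obstacle is exactly this step: controlling the conditional probability of a prescribed edge uniformly over all partially embedded states, including the final small pieces where $\beta\le\gamma pn$ ceases to give relative discrepancy. I would first try to stop the iteration at pieces of size $n/\log n$, and complete them by an absorber-free count using the Chen--Chen--Han--Zhao expansion estimates.
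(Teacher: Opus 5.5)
Your outer reduction matches the paper's: exhibit an $O(\log n/(pn))$-spread measure on the perfect matchings of $G$, then invoke Han--Yuan with palette $[n/2]$, so that $\rho\ge 2C_{\rm tr}q\log(n/2)$ follows from $\rho pn\ge C(\log n)^2$. For a non-uniform measure you also need the rational-approximation step of Lemma~\ref{lem:spread-exact-palette}, at a factor-$2$ cost.

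\textbf{The gap.} The spread measure, which is the whole content beyond that citation, is never constructed, and each route you sketch fails at a concrete point.
\begin{enumerate}
\item A $4$-cycle switch at $uv$ needs $w\in N(u)$ with $M(w)\in N(v)$. So a typical $M$ admits about $p^2n$ switches, not $\Omega(pn)$, and with $O(1)$ preimages the ratio is bounded only by $O(1/(p^2n))$.
\item When $p^3n^2=o(1)$ there may be no switches at all. A typical $\mathbb G(n,p)$ with $pn=(\log n)^3$ meets all the hypotheses, yet almost every edge lies in no $4$-cycle.
\item The stepwise inequality $\Phi(G'-u-v)\le q\,\Phi(G')$ must fail once $G'$ is small: an edge of a $2$-vertex graph lies in every perfect matching.
\item Even granting the spread inequality for $|S|\le n/4$, this yields only a $q^{1/2}$-spread measure. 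Han--Yuan cannot use that at the rate $\rho pn\ge C(\log n)^2$.
\item The hypothesis $\beta\le\gamma pn$ gives relative discrepancy $\beta/(p|X|)=\gamma n/|X|$ on a piece $X$. It therefore controls only pieces of linear size, not pieces of size $n/\log n$, so your nested partition scheme lacks its input.
\item The final ``absorber-free completion'' step is left open.
\end{enumerate}

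\textbf{The missing idea.} It is the conditional-selection lemma (Lemma~\ref{lem:robust-gives-spread}). Let $R$ be a $\vartheta$-random subset of $E(G)$, and let $a$ be the probability that $R$ contains a perfect matching. Fix an order on perfect matchings and take the first one inside $R$, conditioned on one existing. Its law satisfies $\mu(\langle S\rangle)\le\PP[S\subseteq R]/a=\vartheta^{|S|}/a$ for every $S$. So it is $(\vartheta/a)$-spread for all $S$ simultaneously, with no counting or switching and no degeneracy issue.

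The only host-specific input is then uncoloured: $G_\vartheta$ a.a.s.\ has a perfect matching at $\vartheta=2A\log n/(pn)$. The paper obtains this in two steps:
\begin{enumerate}
\item Pass to a balanced bipartite spanning subgraph $H$ with $\delta(H)=\Omega(pn)$ (Lemma~\ref{lem:balanced-bipartition}).
\item Union-bound over Hall violators in $H_\vartheta$, using minimum degree for small violators and bijumbledness for large ones (Lemma~\ref{lem:percolated-Hall}).
\end{enumerate}
This gives $q=\vartheta/a=O(\log n/(pn))$. One factor of $\log n$ comes from this percolation threshold, not from a multi-level partition, and the other comes from the $\log h$ in Han--Yuan.
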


\begin{theorem}[Percolated rainbow clique factors]
\label{thm:exact-clique-factor}
For every integer $k\geq3$ and every $\alpha\in(0,1]$, there exist
constants $C\coloneqq C(k,\alpha)>0$ and
$\gamma\coloneqq\gamma(k,\alpha)>0$ such that the following holds.
Let $G$ be
an $n$-vertex $(p,\beta)$-bijumbled graph and let $\rho \coloneqq \rho(n) \in (0,1]$ satisfy 
$$
 \delta(G)\geq\alpha pn,
 \qquad
 \beta\leq\gamma
       \left(\frac{\rho}{\log n}\right)^{k-2}p^{k-1}n,
 \qquad
 \rho p\geq Cn^{-1/(2k-3)}\log n,
 \qquad \textrm{and} \qquad k\mid n.
$$
Then, following a 
$[(k-1)n/2]$-uniform colouring of $G$, the percolation $G_\rho$ a.a.s. admits a rainbow $K_k$-factor.
\end{theorem}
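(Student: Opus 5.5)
We plan to derive the rainbow $K_k$-factor from the Han--Yuan rainbow spread theorem (second row of Table~\ref{tab:spread-thresholds}). The ground set is $E(G)$ and $h=\binom{k}{2}n/k=(k-1)n/2$, so the exact palette $[(k-1)n/2]$ has exactly $h$ colours. Han--Yuan then gives a rainbow member of $\cA$ in the coloured percolation $G_\rho$ a.a.s., provided two things hold. First, we need a $q$-spread measure $\mu$ on the family $\cA$ of edge sets of $K_k$-factors of $G$. Second, we need $\rho\geq C_0 q\log h$. So the whole task is to build $\mu$ with $q$ of order $\rho/\log n$. Equivalently, we need $\mu(\langle S\rangle)\leq (\rho/(C_0\log n))^{|S|}$ for every edge set $S$.

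\textbf{Constructing the measure.} Following the paper's philosophy of building spread measures from uncoloured containment estimates, we would use a random partition rather than uniform measure on factors. Fix $m$ and split $V(G)$ uniformly at random into parts (or use random vertex orderings) so that each clique in the factor lives in a random sub-host. More concretely, we would sample a random induced subgraph structure under which Theorem~\ref{thm:morris} applies inside every block, pick a $K_k$-factor there deterministically, and let $\mu$ be the resulting law. The cleaner plan is as follows.

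\begin{enumerate}
\item Sample a uniform random equipartition of $V(G)$ into $n/m$ blocks of size $m$, with $k\mid m$ and $m$ of order $\log n/(\rho p)$-ish, to be tuned.
\item Show a.a.s. (over the partition) that each $G[B]$ is $(p,\beta')$-bijumbled with $\delta(G[B])\geq \alpha p m/2$. Minimum degree follows from Chernoff plus a union bound, using $pm\gg \log n$. Bijumbledness is inherited with $\beta'=\beta$, and we need $\beta\leq \eta p^{k-1} m$.
\item Apply Theorem~\ref{thm:morris} to each block, choosing a $K_k$-factor uniformly among the factors of $G[B]$, and concatenate the blocks' factors.
\end{enumerate}

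\textbf{Spread estimate.} For a fixed edge set $S$ spanning vertex set $V(S)$, the event $S\subseteq F$ forces every edge of $S$ to have both ends in a common block. Grouping the components of $S$ by the blocks they land in, the probability that a component on $v$ vertices lies inside one block is about $(m/n)^{v-1}$. We would then bound the conditional probability that a fixed $K_k$ (or partial clique) is used, given that it lies in a block. This is where uncoloured containment enters. The number of $K_k$-factors of $G[B]$ containing a given partial clique, compared with the total, should be at most roughly $(C/(p^{\cdot}m))$ per vertex, via a switching/counting argument. That argument uses clique counts in bijumbled graphs, namely that vertex-neighbourhood clique counts are of order $p^{\binom{k}{2}}m^{k-1}$, which requires $\beta\ll p^{k-1}m$. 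Combining these, each edge of $S$ costs a factor of order $1/(pm)$ or $m/n$. We need both to be at most $\rho/(C_0\log n)$. This is achieved by taking $pm\asymp C_0\log n/\rho$, while the $m/n$ factor requires $m\leq \rho n/\log n$. These two constraints are compatible when $\rho p n\gg(\log n)^2$. The factor $(\rho/\log n)^{k-2}$ in the hypothesis on $\beta$ is exactly $\eta p^{k-1}m$ with $m\asymp \log n/(\rho p)$, up to rescaling. So I expect the bijumbledness condition at block scale $m$ to be the point where the hypothesis $\beta\leq\gamma(\rho/\log n)^{k-2}p^{k-1}n$ is used. The condition $\rho p\geq Cn^{-1/(2k-3)}\log n$ should arise from needing clique counts inside blocks to concentrate (a second-moment or Janson bound).

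\textbf{Main obstacle.} The hardest step is the per-edge spread bound for the inner uniform-factor measure in each block. We must show that a uniformly random $K_k$-factor of a sparse bijumbled $G[B]$ contains any prescribed $s$ edges with probability at most $(C/(pm))^{s}$-type. I would first try a switching argument. Given a factor containing the prescribed cliques, reroute each via absorbing-type swaps with other cliques, counting the swaps with the bijumbled clique-extension estimates. If the switching losses are too large, the fallback is to replace the uniform inner measure by an iterated random-greedy-plus-absorber construction, where spread is tracked step by step.
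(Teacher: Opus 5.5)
There is a genuine gap, and it lies in the construction of the spread measure. Your framing is right: Han--Yuan applied to a $q$-spread measure with $q$ of order $\rho/\log n$. The first failure is the block step as written. $G[B]$ does inherit $(p,\beta)$-bijumbledness, but $\beta$ does not shrink with $m$. Theorem~\ref{thm:morris} on $m\asymp\log n/(\rho p)$ vertices needs $\beta\le\eta p^{k-1}m\asymp\eta p^{k-2}\log n/\rho$. This is not the hypothesis: the hypothesised bound exceeds it by a factor of order $(\rho/\log n)^{k-1}pn$. Under $\rho p\ge Cn^{-1/(2k-3)}\log n$ that factor is at least of order $n^{(k-2)/(2k-3)}$. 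Worse, Observation~\ref{obs:density-floor} gives $\beta\ge(1-p)\sqrt{\alpha pn}$. Under the same density hypothesis this lower bound is incompatible with $\beta\lesssim p^{k-2}\log n/\rho$ unless $p=1-o(1)$, so blocks of your size are essentially never Morris-admissible. Enlarging $m$ to about $\rho n/\log n$ would repair this particular step.

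The deeper problem is the inner estimate: that a uniformly random $K_k$-factor of a sparse bijumbled graph contains a prescribed edge set with probability exponentially small in its size. That is the entire difficulty, and you give no argument for it. Clique-extension counts do not control the number of factors through a prescribed partial clique, and no switching argument for uniform clique factors in sparse pseudorandom hosts is available off the shelf. The partition does not ease this. Heuristically, a fixed clique is used with probability $(m/n)^{k-1}\cdot\Theta\bigl((p^{\binom{k}{2}}m^{k-1})^{-1}\bigr)=\Theta\bigl((p^{\binom{k}{2}}n^{k-1})^{-1}\bigr)$, independently of $m$. Your per-edge bookkeeping is also off for cliques. A clique's $\binom{k}{2}$ edges span only $k$ vertices, so only $k-1$ of them can be paid for. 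The correct rate is $p^{-1}n^{-2/k}$ per edge, not $\max\{1/(pm),m/n\}$.

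The missing idea is that no explicit measure needs to be analysed. By Lemma~\ref{lem:robust-gives-spread}, suppose $G_\vartheta$ contains a $K_k$-factor with probability $a$. Then the conditional law of the first factor (in a fixed order) contained in $G_\vartheta$ is $(\vartheta/a)$-spread. This is immediate, because $S\subseteq F(R)$ forces $S\subseteq R$ and $\PP[S\subseteq R]=\vartheta^{|S|}$. Taking $\vartheta=b\rho/\log n$ reduces everything to uncoloured containment, which the paper proves in four steps:
\begin{itemize}
\item Pass to a degree-regularised $J\subseteq G$ with $\Delta(J)=O(pn)$ (Lemma~\ref{lem:degree-regularisation}).
\item By Lemma~\ref{lem:jumbled-sparsification}, $J_\vartheta$ is $\bigl(t,\,5\vartheta\beta+C_2(\sqrt{\vartheta\Delta(J)}+\sqrt{\log n})\bigr)$-bijumbled with $t=\vartheta p$.
\item Use Chernoff to bound $\delta(J_\vartheta)$ from below.
\item Apply Theorem~\ref{thm:morris} at density $t$.
\end{itemize}
This is where the hypotheses actually enter. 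The bound $\beta\le\gamma(\rho/\log n)^{k-2}p^{k-1}n$ is exactly what gives $5\vartheta\beta\le\frac{\eta}{2}t^{k-1}n$: thinning scales $\beta$ by $\vartheta$, while Morris' allowance scales by $\vartheta^{k-1}$. The condition $\rho p\ge Cn^{-1/(2k-3)}\log n$ is exactly what makes $t^{2k-3}n$ large enough to absorb the fluctuation term of order $\sqrt{tn}$; it does not come from concentration of clique counts. Lemma~\ref{lem:spread-exact-palette} then finishes, with $q\le 2\vartheta$ and $\log h\le 2\log n$. That lemma is Han--Yuan applied after a rational approximation of the measure, which costs a factor $2$.
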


The additional factor $(\rho/\log n)^{k-2}$ in the upper bound on $\beta$ appearing in Theorem~\ref{thm:exact-clique-factor} is absent from Morris'
uncoloured theorem. At $\rho=1$ this is a polylogarithmic strengthening
of the host discrepancy assumption; an explanation follows the statement of 
Theorem~\ref{thm:spread-clique-factor}.

\begin{theorem}[Percolated rainbow prescribed spanning trees]
\label{thm:exact-trees}
For every $\alpha\in(0,1]$, there exist constants
$C\coloneqq C(\alpha)>0$ and $\gamma\coloneqq\gamma(\alpha)>0$
such that, for every fixed integer $\Delta\geq2$, the following holds.
Let $L_{n,\Delta} = \Delta^{5\sqrt{\log n}}$, let $G$ be an $n$-vertex $(p,\beta)$-bijumbled graph, and let $\rho := \rho(n) \in(0,1]$ satisfy 
$$
 \delta(G)\geq\alpha pn,\qquad
 \beta\leq\gamma\frac{pn}{L_{n,\Delta}},\qquad \textrm{and} \qquad
 \rho pn\geq C L_{n,\Delta}(\log n)^2.
$$
Then, following an $[n-1]$-uniform colouring of $G$,
$$
 \sup_{T\in\cT(n,\Delta)}
 \PP[\text{$G_\rho$ admits no rainbow copy of $T$}]=o(1).
$$
\end{theorem}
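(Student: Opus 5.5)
Fix $T\in\cT(n,\Delta)$. The plan is to build a $q$-spread measure on copies of $T$ in $G$ with $q=O(L_{n,\Delta}\log n/(pn))$, uniformly in $T$. We then apply the rainbow spread theorem of Han--Yuan (second row of Table~\ref{tab:spread-thresholds}) with $h=n-1$ and $\kappa=1/q$. That theorem gives a rainbow copy of $T$ in $G_\rho$ under an $[n-1]$-uniform colouring once $\rho\geq C'\log n/\kappa = C'' L_{n,\Delta}(\log n)^2/(pn)$, which is exactly the hypothesis $\rho pn\geq CL_{n,\Delta}(\log n)^2$. The failure probability from Han--Yuan depends only on $h$, $\kappa$ and $C$, not on the structure of $T$, so the bound is uniform and the supremum is $o(1)$.

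To construct the measure we follow the scheme for matchings and Hamilton cycles: the measure is the law of a random copy of $T$ inside a random sparse subgraph of $G$.

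\textbf{Step 1.} Sample $G_\tau$ with $\tau:=A L_{n,\Delta}\log n/(pn)$ for a suitable constant $A=A(\alpha)$. We may assume $\tau\le 1$, since otherwise $L_{n,\Delta}\log n\gtrsim pn$ and the claimed spread is trivial after adjusting constants.

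\textbf{Step 2.} Show that, with probability $1-o(1)$ (say at least $1/2$), $G_\tau$ is a $C_0$-expander with $C_0\ge L_{n,\Delta}$. For (E2): bijumbledness with $\beta\le\gamma pn/L_{n,\Delta}$ gives $e_G(X,Y)\ge p|X||Y|/2$ for $|X|,|Y|\ge n/(2C_0)$. A Chernoff bound and a union bound over at most $4^n$ pairs then show that $G_\tau$ has an edge between every such pair, because $\tau p n^2/C_0^2\gg n$. This follows from $\tau pn/L_{n,\Delta}=A\log n$, provided $C_0$ is taken equal to $L_{n,\Delta}$ up to a constant factor; I expect this to work, but it is tight. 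For (E1) on small sets $X$: every vertex has $G_\tau$-degree at least $\alpha\tau pn/2 = \Omega(AL_{n,\Delta}\log n)$ by Chernoff and a union bound over vertices. Bijumbledness then prevents a small $X$ from having its neighbourhood confined to a set of size $C_0|X|$. The argument is standard: if $|\Gamma(X)\cup X|<(C_0+1)|X|$, then $e_{G_\tau}(X,X\cup\Gamma(X))$ is large, and we need a percolated discrepancy bound on $e_{G_\tau}$. That bound comes from Chernoff plus a union bound over sets of size $s$, costing $\binom ns^2\le e^{2s\log n}$. This is precisely where the extra factor $\log n$ in $\tau$ is consumed. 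Theorem~\ref{thm:han-yang} then shows that $G_\tau$ is $\cT(n,\Delta)$-universal, with $n\ge n_0$.

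\textbf{Step 3.} Let $\mu$ be the law of the edge set of a copy of $T$ in $G_\tau$ chosen by any fixed rule, conditioned on the good event $\mathcal E$ from Step 2. For $S\subseteq E(G)$ we have
$$
\mu(\langle S\rangle)\le \PP[S\subseteq E(G_\tau)\mid\mathcal E]\le \tau^{|S|}/\PP[\mathcal E].
$$
This is at most $(2\tau)^{|S|}$, so $\mu$ is $2\tau$-spread and $\kappa=pn/(2AL_{n,\Delta}\log n)$.

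The main obstacle is Step 2, specifically (E1) for sets of size between roughly $n/(pn\tau)$ and $n/(2C_0)$. For these sets the degree argument alone is insufficient, and the percolated discrepancy must be made strong enough to certify expansion by a factor $L_{n,\Delta}$. If this fails to close directly, I would first try splitting into $|X|\le \varepsilon n/L_{n,\Delta}$, handled by host bijumbledness applied to $X$ and its $G$-neighbourhood combined with a Chernoff bound on edges of $G$ leaving $X$, and larger sets, handled by (E2)-type counting. If needed I would enlarge $A$ to absorb the union-bound entropy.
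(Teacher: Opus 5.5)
\textbf{There is a gap in Step~2.} Your architecture coincides with the paper's: an auxiliary percolation at rate $\tau\asymp L_{n,\Delta}\log n/(pn)$, a.a.s.\ $\lceil L_{n,\Delta}\rceil$-expansion of $G_\tau$, Theorem~\ref{thm:han-yang}, conditional selection giving a $2\tau$-spread measure, and Han--Yuan at rate $\rho$. The problem is Step~2, which is the only place where real work is needed.

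For (E2) you union-bound over $4^n$ pairs and claim that $\tau pn^2/C_0^2\gg n$ follows from $\tau pn=AL_{n,\Delta}\log n$. With $C_0\asymp L_{n,\Delta}$, however,
$$
 \frac{\tau pn^2}{C_0^2}=\frac{AL_{n,\Delta}\,n\log n}{C_0^2}\asymp\frac{An\log n}{L_{n,\Delta}}=o(n),
$$
since $L_{n,\Delta}=\Delta^{5\sqrt{\log n}}\gg\log n$. So the estimate is not ``tight''; it fails by a factor of order $L_{n,\Delta}/\log n$. The repair is to test only pairs of sets of size exactly $t=\lceil n/(2C_0)\rceil$. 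There are at most $\binom{n}{t}^2\le(2eC_0)^{2t}=\exp\bigl(O((n/L_{n,\Delta})\log L_{n,\Delta})\bigr)$ such pairs. This entropy is beaten by $\tau pt^2/2\gtrsim An\log n/L_{n,\Delta}$ precisely because $\log L_{n,\Delta}=O(\sqrt{\log n})=o(\log n)$. That is the role of the hypothesis $\log R=o(\log n)$ in Lemma~\ref{lem:percolated-expansion}.

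For (E1) you leave the medium range open, and your entropy count is too small. With Chernoff and a union bound over the pairs $(X,X\cup\Gamma(X))$ that actually occur, the second set has size about $C_0|X|$. The count is therefore about $e^{C_0|X|\log n}$, not $\binom{n}{|X|}^2$, and it is the factor $L_{n,\Delta}$ in $\tau$, not just the $\log n$, that pays for it.

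The paper avoids degree concentration and percolated discrepancy altogether. For $|X|=x<n/(2R)$ and each $Y$ with $|Y|=Rx$, it proves in the host that $e_G(X,Z)\ge\kappa pnx$, where $Z=V(G)\setminus(X\cup Y)$. This uses $\delta(G)\ge\alpha pn$ and bijumbledness when $x\le\alpha n/(4(R+1))$, and $|Z|\ge n/4$ and bijumbledness otherwise. A union bound over $n^{(R+1)x}$ pairs against $(1-\vartheta)^{\kappa pnx}$ then closes, because $-\kappa pn\log(1-\vartheta)\ge(R+3)\log n$. Your fallback for larger sets is essentially this second range, so your plan can be completed, but Step~2 as written is not a proof.

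Two smaller points. First, Theorem~\ref{thm:han-yuan} is stated for multihypergraphs, i.e.\ uniform measures, whereas conditional selection produces arbitrary real weights. The paper handles this by rational approximation at the cost of a factor $2$ in the spread (Lemma~\ref{lem:spread-exact-palette}). Second, the case $\tau>1$ never arises: $\rho\le1$ and $\rho pn\ge CL_{n,\Delta}(\log n)^2$ give $\tau\le A/(C\log n)$. Were it to arise, the Han--Yuan condition would be unsatisfiable rather than trivial.
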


Up to constants, the bounds on $\beta$ appearing in Theorems~\ref{thm:han-yang-pseudo} and ~\ref{thm:exact-trees} coincide. This (asymptotic) bound on the discrepancy implicitly imposes the density requirement $pn = \Omega(L_{n,\Delta}^2)$ (see Observation~\ref{obs:density-floor} and its proof below). In particular, in the setting of Theorem~\ref{thm:exact-trees} with percolation rate $\rho =1$, we have $pn \geq \frac{\alpha}{4 \gamma^2}L_{n,\Delta}^2$ for sufficiently large $n$. Consequently, at $\rho =1$, the explicit density requirement of $pn = \Omega (L_{n,\Delta} (\log n)^2)$, imposed in Theorem~\ref{thm:exact-trees}, is already subsumed by the implicit density threshold imposed by the aforementioned bound on the discrepancy parameter. On the other hand, the minimum degree condition imposed in Theorem~\ref{thm:exact-trees} is stronger than the one imposed in Theorem~\ref{thm:han-yang-pseudo}. 


\begin{theorem}[Percolated rainbow Hamilton cycles]
\label{thm:exact-hamilton}
For every $\alpha\in(0,1]$, there exist constants
$C\coloneqq C(\alpha)>0$ and $\gamma\coloneqq\gamma(\alpha)>0$
such that the following holds.
Let $G$ be an $n$-vertex $(p,\beta)$-bijumbled graph and let
$\rho \coloneqq \rho(n) \in(0,1]$ satisfy 
$$
 \delta(G)\geq\alpha pn,\qquad
 \beta\leq\gamma pn, \qquad \textrm{and} \qquad
 \rho pn\geq C(\log n)^2.
$$
Then, following an $[n]$-uniform colouring of $G$, the percolation $G_\rho$ a.a.s. admits a rainbow Hamilton cycle.
\end{theorem}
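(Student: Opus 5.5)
The plan is to apply the Han--Yuan rainbow spread theorem (second row of Table~\ref{tab:spread-thresholds}) with ground set $\Omega=E(G)$, $h=n$, and the family $\cA$ of edge sets of Hamilton cycles of $G$. The retained coloured set $G_\rho$ with an $[n]$-uniform colouring is exactly the setting of that theorem. So it suffices to build a probability measure $\mu$ on Hamilton cycles of $G$ that is $q$-spread with $q\le K\log n/(pn)$, where $K=K(\alpha)$. Then $\kappa=1/q\ge pn/(K\log n)$, and the required rate $C'\log h/\kappa\le C'K(\log n)^2/(pn)$ is at most $\rho$ once $C\ge C'K$.

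To construct $\mu$, I would take a random sparsification $G_\tau$ of $G$ with $\tau:=D\log n/(pn)$ for a large constant $D=D(\alpha)$. Conditional on the event $\mathcal E$ that $G_\tau$ is Hamiltonian, let $\mu$ be the law of a uniformly chosen (or any fixed-rule) Hamilton cycle of $G_\tau$. Then for every edge set $S$,
$$\mu(\langle S\rangle)\le \PP[S\subseteq E(G_\tau)]/\PP[\mathcal E]=\tau^{|S|}/\PP[\mathcal E]\le (2\tau)^{|S|}$$
whenever $\PP[\mathcal E]\ge1/2$ and $S\neq\emptyset$. This gives $q=2\tau$, which has the required order $\log n/(pn)$.

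The main step is therefore to show that $G_\tau$ is a.a.s.\ a $C_0$-expander, with $C_0$ the constant of Theorem~\ref{thm:expander-hamilton}, so that Theorem~\ref{thm:expander-hamilton} makes it Hamiltonian.

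For (E2): let $X,Y$ be disjoint with $|X|,|Y|\ge n/(2C_0)$. Discrepancy with $\gamma$ small gives $e_G(X,Y)\ge p|X||Y|/2=\Omega(pn^2)$. The probability that no such edge survives is at most $\exp(-\tau\Omega(pn^2))=\exp(-\Omega(Dn\log n))$, which beats the $4^n$ choices of the pair.

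For (E1): Chernoff and $\delta(G)\ge\alpha pn$ give $\deg_{G_\tau}(v)\ge \alpha D\log n/2$ for all $v$ a.a.s. For small sets $X$, say $|X|\le \varepsilon n/\log n$, I would argue as in Chen--Chen--Han--Zhao. If $|\Gamma(X)|<C_0|X|$, then $X\cup\Gamma(X)$ spans at least $|X|\alpha D\log n/4$ edges of $G_\tau$ on $O(|X|)$ vertices. A union bound then applies: bijumbledness bounds $e_G$ on small sets by $p s^2+\beta s\le 2\gamma pns$, and the expected number of $\tau$-retained edges is $O(\gamma D\log n\cdot s)$. Taking $\gamma$ small relative to $\alpha$, the probability of at least $\alpha D s\log n/4$ such edges is $n^{-\Omega(Ds)}$, which beats the $n^{O(s)}$ choices. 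For medium sets, up to $n/(2C_0)$, I would use the non-retained neighbourhood directly. If $|\Gamma_{G_\tau}(X)|<C_0|X|$, set $Y=V\setminus(X\cup\Gamma(X))$, which has size $\ge n/2$. Discrepancy gives $e_G(X,Y)\ge p|X|n/4$, so the failure probability is $\exp(-\Omega(D|X|\log n))$, and this covers the $\binom{n}{|X|}^{O(1)}$ choices.

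I expect the bookkeeping in (E1) around the small/medium boundary to be the main obstacle. The argument must keep $\gamma$ depending only on $\alpha$, and it must ensure that the bound $\beta\le\gamma pn$ controls the edges inside sets of size $\Theta(n/\log n)$ without any extra density assumption. Here $pn\ge C(\log n)^2$ holds automatically because $\rho\le1$, and this should give enough room.
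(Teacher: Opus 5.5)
Your architecture is the paper's proof of Theorem~\ref{thm:spread-hamilton}. You sparsify at rate $\tau=\Theta(\log n/(pn))$, show $G_\tau$ is a.a.s.\ a $C_0$-expander, apply Theorem~\ref{thm:expander-hamilton}, turn conditional selection into a $2\tau$-spread measure, and invoke Han--Yuan. There is one small omission: Theorem~\ref{thm:han-yuan} concerns multihypergraphs, i.e.\ uniform edge measures. Your conditional law may have irrational weights, so it must first be rationally approximated and represented with repeated edges, at the cost of a factor $2$ in $q$. This is Lemma~\ref{lem:spread-exact-palette}.

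The genuine gap is the medium-set case of (E1), and it cannot be closed by choosing constants. For $x=|X|$ just above $\varepsilon n/\log n$ you claim discrepancy gives $e_G(X,Y)\ge pxn/4$ with $|Y|=\Theta(n)$. Bijumbledness only gives $e_G(X,Y)\ge px|Y|-\beta\sqrt{x|Y|}$. With $\beta=\gamma pn$ the error term exceeds the main term unless $x\gtrsim\gamma^2 n$; at $x=\varepsilon n/\log n$ their ratio is of order $\gamma\sqrt{\log n/\varepsilon}\to\infty$. So on the window $\varepsilon n/\log n<x\ll\gamma^2 n$ you have no lower bound on $e_G(X,Y)$, and with $\gamma$ a fixed constant this window is non-empty for large $n$. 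This is exactly the boundary bookkeeping you were worried about.

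The missing idea is to use the minimum degree of $G$ itself, as the paper does in Lemma~\ref{lem:percolated-expansion}. Let $Y'\supseteq\Gamma_{G_\tau}(X)$ with $|Y'|=C_0x$ and $Z=V(G)\setminus(X\cup Y')$. Then
\[
 e_G(X,Z)\ \ge\ \alpha pnx-\ee_G(X,X)-e_G(X,Y')\ \ge\ \alpha pnx-p(C_0+1)x^2-\beta(1+\sqrt{C_0})\,x\ \ge\ \tfrac{\alpha}{2}\,pnx
\]
for every $x\le\alpha n/(4(C_0+1))$, provided $\gamma\le\alpha/(4(1+\sqrt{C_0}))$. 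The $\beta$-term is linear in $x$ precisely because $|Y'|=C_0|X|$. Then $(1-\tau)^{e_G(X,Z)}\le n^{-\alpha Dx/2}$ beats the $n^{(C_0+1)x}$ choices of $(X,Y')$. Plain discrepancy is then needed only for $x>\alpha n/(4(C_0+1))$, where it works. This single union bound also makes your degree-Chernoff and dense-spot argument for small sets unnecessary.

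Alternatively, your dense-spot argument in fact holds for all $x\le\varepsilon n$ with a small constant $\varepsilon=\varepsilon(\alpha,C_0)$. There the expected number of retained edges in $X\cup Y'$, about $\tau(ps^2+\beta s)$ with $s=(C_0+1)x$, is still a small fraction of the threshold $\alpha Dx\log n/4$. With that larger small-set range, your medium-range discrepancy step becomes correct once $\gamma\le\sqrt{\varepsilon}/4$.
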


\begin{remark}
The sizes of the palettes in Theorems~\ref{thm:exact-matching} and~\ref{thm:exact-hamilton} are different; hence, the latter theorem does not imply the former. 
\end{remark}

\subsubsection{Comparison with the uncoloured results of Chen--Chen--Han--Zhao~\cite{ChenChenHanZhao2025}}

As mentioned above, Chen--Chen--Han--Zhao~\cite{ChenChenHanZhao2025} study the statistical robustness of spanning configurations in $(n,d,\lambda)$-graphs in the colourless setting. In this section, we perform a parameter comparison between our exact-palette rainbow results and their results.  
Table~\ref{tab:chen-rainbow-matching-hamilton} starts this comparison by considering the results pertaining to perfect matchings and Hamiltonicity; note that, like our results, these results from~\cite{ChenChenHanZhao2025} cover the more general class of $(p, \beta)$-bijumbled graphs. 

\begin{table}[htbp]
\centering
\small
\caption{{\small \textbf{Uncoloured and exact-palette rainbow robustness.}
The host is an $n$-vertex $(p,\beta)$-bijumbled graph and
$\rho\in(0,1]$. All conclusions hold asymptotically almost surely.
Here, $\alpha\in(0,1]$ is fixed, $\varepsilon,\gamma>0$ are sufficiently
small, and $C>0$ is sufficiently large; constants may differ between
columns and depend on $\alpha$. Perfect matchings require $2\mid n$.}}
\label{tab:chen-rainbow-matching-hamilton}
\setlength{\tabcolsep}{5pt}
\renewcommand{\arraystretch}{1.15}
\begin{tabularx}{\textwidth}{@{}
 >{\raggedright\arraybackslash}p{0.22\textwidth}
 >{\raggedright\arraybackslash}X
 >{\raggedright\arraybackslash}X@{}}
\toprule
&
\textbf{Chen--Chen--Han--Zhao}\newline
~\cite[Theorem~3.1]{ChenChenHanZhao2025}
&
\textbf{Our exact-palette results}\newline
Theorems~\ref{thm:exact-matching}
and~\ref{thm:exact-hamilton}
\\
\midrule
Host hypotheses
& $\delta(G)\geq\alpha pn$,\newline $\beta\leq\varepsilon pn$
& $\delta(G)\geq\alpha pn$,\newline $\beta\leq\gamma pn$
\\[4pt]
Sufficient retained degree scale
& $\rho pn\geq C\log n$
& $\rho pn\geq C(\log n)^2$
\\[4pt]
Conclusion
& Perfect matching or Hamilton cycle
& Rainbow perfect matching or rainbow Hamilton cycle
\\[4pt]
Palette size
& Uncoloured
& $n/2$ for a perfect matching;\newline $n$ for a Hamilton cycle
\\
\bottomrule
\end{tabularx}
\end{table}

Theorems~\ref{thm:exact-matching} and~\ref{thm:exact-hamilton}
extend the uncoloured robustness results of
Chen--Chen--Han--Zhao~\cite[Theorems~1.4 and~3.1]{ChenChenHanZhao2025}
to independent uniform edge colourings with exact palettes, at the
cost of an additional logarithmic factor in the sufficient retention
rate. The minimum-degree and bijumbledness assumptions retain the same
scales, with suitable choices of constants.
\FloatBarrier

\medskip
Proceeding to clique factors, Theorem~\ref{thm:exact-clique-factor} covers every fixed $k\geq3$
with exactly $(k-1)n/2$ colours. Its comparison with the uncoloured
triangle-factor result of
Chen--Chen--Han--Zhao~\cite[Theorem~1.6]{ChenChenHanZhao2025}
requires attention to the different host assumptions.
For $3\mid n$, their result concerns $(n,d,\lambda)$-graphs satisfying
$$
 d\geq C_0n^{5/6}(\log n)^{1/2}
 \qquad \textrm{and} \qquad
 \lambda\leq\varepsilon\frac{d^2}{n},
$$
where $\varepsilon>0$ is sufficiently small and
$C_0=C_0(\varepsilon)>0$ is sufficiently large.
The percolation $G_\rho$ a.a.s. admits a triangle factor whenever
$$
 \rho d\gg n^{1/3}(\log n)^{1/3}.
$$
On these regular hosts, specialising
Theorem~\ref{thm:exact-clique-factor} to $k=3$, $p=d/n$,
$\beta=\lambda$, and $\alpha=1$, instead requires
$$
 \lambda\leq\gamma\frac{\rho}{\log n}\cdot\frac{d^2}{n}
 \qquad \textrm{and} \qquad
 \rho d\geq Cn^{2/3}\log n,
$$
where $\gamma,C>0$ are the constants supplied by that theorem;
the resulting triangle factor is rainbow with exactly $n$ colours.
Thus, on the common class of spectral hosts, our sufficient
discrepancy bound is smaller by a factor of order $\rho/\log n$,
and our sufficient retention requirement is stronger. Apart from obtaining clique factors that are rainbow, we handle every fixed clique size and the more general class of bijumbled graphs. Moreover, we consider host graphs with a smaller minimum degree.
 

\subsection{A palette transference principle}

The four exact-palette applications follow from
Theorem~\ref{thm:spread-transference}, which combines a containment
estimate with the Han--Yuan~\cite{HanYuan2025} rainbow threshold theorem stated as Theorem~\ref{thm:han-yuan}.

It is useful to see the core message of Theorem~\ref{thm:spread-transference} in graph terminology prior to its
formal statement. If the edge-sets of specified spanning configurations
in a graph $G$ support a sufficiently spread measure, then this
\emph{deterministic} property ensures a form of \emph{statistical
robustness}: after an independent uniform colouring, typical
percolations of $G$ retain rainbow copies of those configurations,
even when the palette contains exactly the required number of colours.

\medskip
 For a finite set $\Omega$ and $\rho\in[0,1]$, the random set $\Omega_\rho$ retains each element of $\Omega$ independently with probability $\rho$.

\begin{theorem}[Exact-palette spread transference with percolation]
\label{thm:spread-transference}
For every positive integer $n$, let $\Omega_n$ be a finite set and let
$\cA_n \subseteq \binom{\Omega_n}{h_n}$, where $h_n\to\infty$. Let $\vartheta_n \in (0,1]$
and let 
$$
 a_n = \PP[\text{$(\Omega_n)_{\vartheta_n}$ contains a member of $\cA_n$}].
$$
If $a_n>0$ for every $n$ and
$
 q_n:=\frac{\vartheta_n}{a_n} \leq 1/2
$
holds for all sufficiently
large $n$, then $\cA_n$ supports a $q_n$-spread  measure. 

Moreover, there exists an absolute constant $C_{\rm tr}>0$ such that, upon colouring the members of $\Omega_n$ independently and uniformly using the palette $[h_n]$, 
$$
\PP[(\Omega_n)_{\rho_n}\; \text{admits a rainbow member of}\; \cA_n] = 1 - o(1)
$$
holds whenever $\rho_n\in(0,1]$ satisfies
$
 \rho_n\geq
 2C_{\rm tr}\cdot q_n \log h_n
$,
where the probability is over both the colouring and the
$\rho_n$-percolation. 

In particular, there exists an absolute constant $c_{\rm sp}>0$
such that, whenever $q_n\log h_n\leq c_{\rm sp}$ for all sufficiently large $n$, an independent uniform colouring of
$\Omega_n$ from $[h_n]$ a.a.s. admits a rainbow member of $\cA_n$.
\end{theorem}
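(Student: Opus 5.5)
The plan has two parts. First, extract a spread measure from the containment probability $a_n$. Second, feed this measure into the Han--Yuan rainbow threshold theorem, Theorem~\ref{thm:han-yuan}.

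\textbf{Spread measure.} Given the random set $W=(\Omega_n)_{\vartheta_n}$, on the event $\mathcal E$ that $W$ contains a member of $\cA_n$, I choose one such member $A(W)\subseteq W$ by a fixed rule. Say we take the lexicographically first member in a fixed ordering of $\cA_n$. Define $\mu(A)=\PP[A(W)=A\mid\mathcal E]$. Then $\mu(\cA_n)=1$. For any $S\subseteq\Omega_n$ we have $\langle S\rangle\ni A(W)$ only if $S\subseteq W$. Hence
$$
\mu(\langle S\rangle)\le \frac{\PP[S\subseteq W]}{\PP[\mathcal E]}=\frac{\vartheta_n^{|S|}}{a_n}.
$$
This gives $q_n^{|S|}$ only when $|S|=1$, since for $|S|\ge 1$ we need $\vartheta_n^{s}/a_n\le(\vartheta_n/a_n)^s$, i.e.\ $a_n^{s-1}\le 1$, which is true. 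For $S=\emptyset$ the bound is trivial. So $\mu$ is $q_n$-spread, which is the first assertion.

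\textbf{Rainbow step.} The condition $q_n\le 1/2$ is presumably needed to match the normalisation of the Han--Yuan theorem. That theorem is phrased for uniform measures on an $h$-bounded multihypergraph that is $\kappa$-spread. I would therefore approximate $\mu$ by a uniform measure on a multiset. Take $M$ large and include each $A$ with multiplicity $\lfloor M\mu(A)\rfloor$ (or use rational approximation, since all probabilities here are polynomials in $\vartheta_n$). A small loss in the spread parameter, say a factor $1+o(1)\le 2$, is absorbed into the constant; this is where $2C_{\rm tr}$ and $q_n\le1/2$ enter. Alternatively, if Theorem~\ref{thm:han-yuan} as stated earlier already applies to general $q$-spread measures, I would apply it directly. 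Either way, I take $\kappa=1/(2q_n)$ and $h=h_n\to\infty$. Since all members have size exactly $h_n$ and the palette is $[h_n]$, the theorem gives that for $\rho_n\ge C\log h_n/\kappa=2Cq_n\log h_n$, the percolation $(\Omega_n)_{\rho_n}$ with an independent uniform $[h_n]$-colouring a.a.s.\ contains a rainbow member. Set $C_{\rm tr}=C$.

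\textbf{Final clause.} This is the case $\rho_n=1$. If $q_n\log h_n\le c_{\rm sp}:=1/(2C_{\rm tr})$, then $\rho_n=1$ satisfies $1\ge 2C_{\rm tr}q_n\log h_n$. Since $\Omega_1=\Omega$, the colouring of the whole ground set a.a.s.\ admits a rainbow member.

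The main obstacle is the interface with Theorem~\ref{thm:han-yuan}: checking its exact hypotheses (uniform versus arbitrary measure, whether it requires $\kappa$-spread with $\kappa$ relative to $h$, and whether the $o(1)$ is uniform over $n$ given only $h_n\to\infty$). I would handle the measure-to-multihypergraph discretisation carefully, with the factor-$2$ slack, and make sure $\rho_n\le1$ is consistent.
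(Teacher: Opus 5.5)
Your proposal is correct and takes essentially the paper's route: selecting the first member of $\cA_n$ contained in $(\Omega_n)_{\vartheta_n}$ gives $\mu(\langle S\rangle)\le\vartheta_n^{|S|}/a_n\le q_n^{|S|}$, and a factor-two discretisation then gives a $(2q_n)^{-1}$-spread $h_n$-uniform multihypergraph, to which Theorem~\ref{thm:han-yuan} is applied with $r=k=h_n$ (so $C_{\rm tr}=C_{\rm HY}$, and $\rho_n=1$ gives the final clause). The only difference is the discretisation: the paper approximates $\mu$ by a strictly positive rational vector within $\ell_1$-distance $\delta_0=\min_S\bigl((2q)^{|S|}-\mu(\langle S\rangle)\bigr)$, whereas your floor multiplicities $\lfloor M\mu(A)\rfloor$ also work once $M\ge 2|\cA_0|$ ($\cA_0$ the support of $\mu$), since the total multiplicity is then at least $M/2$ and hence $\nu(\langle S\rangle)\le 2\mu(\langle S\rangle)\le(2q_n)^{|S|}$ (your aside that the masses are rational because they are polynomials in $\vartheta_n$ holds only for rational $\vartheta_n$, but the floor construction does not need it).
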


\begin{remark}
The auxiliary retention rate $\vartheta_n$ is only used in order to construct
 the spread measure from an uncoloured containment experiment.
The final percolation at rate $\rho_n$ is sampled directly from
$\Omega_n$, independently of the colouring. These are separate uses
of sampling, not successive percolations; the final retention rate is
$\rho_n$, not $\vartheta_n\rho_n$.
\end{remark}

Theorem~\ref{thm:spread-transference} isolates the following useful
argument. If an auxiliary uncoloured $\vartheta$-percolation contains
a target with probability $a>0$, conditional selection provides a
$(\vartheta/a)$-spread measure. This deterministic certificate then
supports exact-palette rainbow containment at final retention rate
$\rho\geq2C_{\rm tr}(\vartheta/a)\log h$. Thus, one can verify the
rainbow conclusion through uncoloured containment estimates and
deterministic embedding theorems. In our applications $a=1-o(1)$,
so conditioning changes the spread parameter by a factor $1+o(1)$.


\subsubsection{Spread form of our rainbow embedding results} Using the notion of spread measures and Theorem~\ref{thm:spread-transference}, we now state strengthenings of the results of Section~\ref{sec:exact}; these have the same percolation and rainbow conclusions but with the additional claim that certain spread measures exist. 

\begin{theorem}[Spread form of Theorem~\ref{thm:exact-matching}]
\label{thm:spread-matching}
For every $c>0$, there exist constants $C\coloneqq C(c)>0$ and
$\gamma\coloneqq\gamma(c)>0$ such that the following holds.
Let $G$ be an $n$-vertex $(p,\beta)$-bijumbled graph and let
$\rho \coloneqq \rho(n)\in(0,1]$ satisfy 
$$
 \delta(G)\geq cpn,\quad
 \beta\leq\gamma pn,\quad
 \rho pn\geq C(\log n)^2, \quad \textrm{and} \quad 2\mid n.
$$
Then, the family of perfect matchings of $G$ supports a $C\log n/(pn)$-spread measure.
Moreover, following an $[n/2]$-uniform colouring of $G$, the percolation $G_\rho$ a.a.s. admits a rainbow perfect matching.
\end{theorem}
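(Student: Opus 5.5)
The plan is to apply Theorem~\ref{thm:spread-transference} with $\Omega_n=E(G)$ and $\cA_n$ the edge sets of perfect matchings of $G$, so $h_n=n/2$. Choose the auxiliary rate $\vartheta:=C_0\log n/(pn)$ for a suitably large $C_0=C_0(c)$, capping it at $1$ if needed. We then need an uncoloured containment estimate of the form $a_n=\PP[G_\vartheta \text{ has a perfect matching}]=1-o(1)$. Given this, $q_n=\vartheta/a_n\le 2C_0\log n/(pn)$. This is at most $1/2$ for large $n$, because $pn\ge \rho pn\ge C(\log n)^2$ gives $pn=\omega(\log n)$. The first part of Theorem~\ref{thm:spread-transference} then yields a $C\log n/(pn)$-spread measure once $C\ge 2C_0$.

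For the rainbow conclusion, the second part of Theorem~\ref{thm:spread-transference} requires $\rho\ge 2C_{\rm tr}q_n\log(n/2)$. It therefore suffices that $\rho\ge 4C_{\rm tr}C_0(\log n)^2/(pn)$, which follows from $\rho pn\ge C(\log n)^2$ once $C\ge 4C_{\rm tr}C_0$. Note that $\rho$ and $\vartheta$ play separate roles, as in the remark following Theorem~\ref{thm:spread-transference}.

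The key step, and the main obstacle, is the containment estimate for $G_\vartheta$. The first option is to quote the Chen--Chen--Han--Zhao robustness theorem recorded above, with $\alpha=c$, $\eta=1$ and $\varepsilon(c,1)$. It gives a.a.s. Hamiltonicity of $G_\vartheta$, and hence a perfect matching since $n$ is even, provided $\vartheta\ge 2\log n/(cpn)$ and $\beta\le\varepsilon pn$. So we take $\gamma\le\varepsilon$ and $C_0\ge 2/c$. If we would rather not depend on that external result, the alternative is a self-contained argument. It would show that $G_\vartheta$ is a.a.s. a $K$-expander in the sense of Definition~\ref{def:expander}, and then apply Theorem~\ref{thm:expander-hamilton}. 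The two ingredients are as follows.

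\textbf{(E2).} For disjoint $X,Y$ of linear size, bijumbledness gives $e_G(X,Y)\ge p|X||Y|-\beta\sqrt{|X||Y|}=\Omega(pn^2)$. The probability that no such edge survives is at most $(1-\vartheta)^{\Omega(pn^2)}=e^{-\Omega(C_0 n\log n)}$, which beats the $4^n$ choices of $X$ and $Y$.

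\textbf{(E1).} For small $X$, we use $\delta(G)\ge cpn$ together with Chernoff to get minimum degree $\Omega(C_0\log n)$ in $G_\vartheta$. We then use discrepancy to bound edges inside small sets, $e_G(U)\le p|U|^2+\beta|U|$, and union bound over pairs $(X,N)$ with $|N|<K|X|$. The delicate range is $|X|$ between roughly $n/(pn)$ and $n/(2K)$. Here $\beta\le\gamma pn$ must be used to show that $X\cup\Gamma(X)$ cannot absorb the $\Omega(|X|pn)$ $G$-edges leaving $X$. The union bound then needs $\vartheta p n|X|\gg |X|\log n$, which is exactly what the choice of $\vartheta$ provides.
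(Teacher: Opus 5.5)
Your proposal is correct. Its outer layer matches the paper's Steps~2 and~3: an auxiliary rate $\vartheta\asymp\log n/(pn)$, $a_n=1-o(1)$, $q_n\le 2\vartheta$, and then both assertions of Theorem~\ref{thm:spread-transference} with $C$ a large multiple of $C_0$ and of $C_{\rm tr}C_0$. The real difference is the uncoloured containment estimate.

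The paper never goes through Hamiltonicity. It takes a balanced bipartition $H=G[U,W]$ from Lemma~\ref{lem:balanced-bipartition} with $r=1/4$, so that $\delta(H)\ge \min\{c,1\}pn/4$ and $H$ stays bipartite $(p,\beta)$-bijumbled. It then sets $\vartheta=2A\log n/(pn)$ with $A=8/\min\{c,1\}$ and applies Lemma~\ref{lem:percolated-Hall}, which is a direct union bound over Hall obstructions $(S,B)$ in $H_\vartheta$.

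You instead get the perfect matching from a Hamilton cycle in $G_\vartheta$, using $2\mid n$. You do this either by quoting Chen--Chen--Han--Zhao or via percolated expansion plus Theorem~\ref{thm:expander-hamilton}. Your self-contained variant is exactly Corollary~\ref{lem:hamilton-percolation} combined with Theorem~\ref{thm:expander-hamilton}, i.e.\ Step~1 of the paper's proof of Theorem~\ref{thm:spread-hamilton}, so you could simply cite it. In that argument your Chernoff minimum-degree step and the worry about a ``delicate range'' are unnecessary. A single union bound over pairs $(X,Y)$ with $|Y|=K|X|$, using $e_G(X,V\setminus(X\cup Y))\ge\kappa pn|X|$, covers all sizes $|X|<n/(2K)$ at once.

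Your route buys economy: the matching and Hamilton-cycle theorems share one containment step, and no bipartition is needed. The paper's route keeps the matching result on Hall's theorem and elementary union bounds, with an explicit discrepancy constant $\eta(a,A)$. It does not depend on the deep Dragani\'c--Montgomery--Munh\'a Correia--Pokrovskiy--Sudakov theorem or on an external robustness theorem.

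One small fix: if you cite Chen--Chen--Han--Zhao, take $\alpha=\min\{c,1\}$ rather than $c$, since that result is stated for $\alpha\in(0,1]$. Then choose $C_0\ge 2/\min\{c,1\}$ accordingly.
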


\begin{theorem}[Spread form of Theorem~\ref{thm:exact-clique-factor}]
\label{thm:spread-clique-factor}
For every integer $k\geq3$ and every $\alpha\in(0,1]$, there exist
constants $C\coloneqq C(k,\alpha)>0$,
$\gamma\coloneqq\gamma(k,\alpha)>0$, and
$\sigma\coloneqq\sigma(k,\alpha)>0$ such that the following holds.
Let $G$ be an $n$-vertex $(p,\beta)$-bijumbled graph and let
$\rho \coloneqq \rho(n) \in(0,1]$ satisfy 
$$
 \delta(G)\geq\alpha pn,\quad
 \beta\leq\gamma
       \left(\frac{\rho}{\log n}\right)^{k-2}p^{k-1}n,
 \quad
 \rho p\geq Cn^{-1/(2k-3)}\log n,\quad \textrm{and} \quad k\mid n.
$$
Then, the family of $K_k$-factors of $G$ supports a $(\sigma\rho/\log n)$-spread measure.
Moreover, following a $[(k-1)n/2]$-uniform colouring of $G$, the percolation $G_\rho$ a.a.s. admits a rainbow $K_k$-factor.
\end{theorem}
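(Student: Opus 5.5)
We apply Theorem~\ref{thm:spread-transference} with $\Omega_n=E(G)$, $h_n=(k-1)n/2$ and $\cA_n$ the family of edge sets of $K_k$-factors of $G$. We take the auxiliary retention rate $\vartheta:=\sigma'\rho/\log n$ for a small constant $\sigma'=\sigma'(k,\alpha)$. It then suffices to show that $a_n=\PP[G_\vartheta\text{ contains a }K_k\text{-factor}]=1-o(1)$. Granting this, $q_n=\vartheta/a_n\le 2\sigma'\rho/\log n=:\sigma\rho/\log n$ gives the spread claim. The rainbow claim at retention rate $\rho$ follows once $\rho\ge 2C_{\rm tr}q_n\log h_n$. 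Since $\log h_n\le 2\log n$, this reduces to $\rho\ge 4C_{\rm tr}\sigma\rho$, which holds after choosing $\sigma'$ small enough (equivalently, $\sigma\le 1/(4C_{\rm tr})$). Note also that $q_n\le 1/2$ trivially.

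The main step is the uncoloured containment estimate. The plan is to show that $G_\vartheta$ is a.a.s.\ a $(\vartheta p,\beta')$-bijumbled graph with $\delta(G_\vartheta)\ge (\alpha/2)\vartheta pn$ and $\beta'\le \eta(k,\alpha/2)(\vartheta p)^{k-1}n$, and then to invoke Theorem~\ref{thm:morris}.

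\emph{Minimum degree.} Each degree is binomial with mean at least $\alpha\vartheta pn$. We have $\vartheta pn=\sigma'\rho pn/\log n\ge \sigma' C n^{1-1/(2k-3)}$, which is polynomially large, so Chernoff's bound and a union bound over vertices suffice.

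\emph{Discrepancy.} For $X,Y$ we write $\ee_{G_\vartheta}(X,Y)-\vartheta p|X||Y| = \bigl(\ee_{G_\vartheta}(X,Y)-\vartheta\,\ee_G(X,Y)\bigr)+\vartheta\bigl(\ee_G(X,Y)-p|X||Y|\bigr)$. The second term is at most $\vartheta\beta\sqrt{|X||Y|}$. Using the hypothesis on $\beta$,
$$\vartheta\beta\le \gamma\,\vartheta\Bigl(\frac{\rho}{\log n}\Bigr)^{k-2}p^{k-1}n=\gamma\sigma'^{-(k-2)}(\vartheta p)^{k-1}n.$$
This is exactly where the factor $(\rho/\log n)^{k-2}$ is spent: choosing $\gamma\ll \eta\,\sigma'^{k-2}$ makes this contribution at most $\tfrac{\eta}{2}(\vartheta p)^{k-1}n$.

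The first term is a sum of independent fluctuations with mean $0$ and variance at most $\vartheta\,\ee_G(X,Y)\le 2\vartheta p|X||Y|$ when $|X||Y|$ is not too small. A Chernoff bound plus a union bound over all pairs of sets of sizes $(x,y)$, which costs $\exp(O((x+y)\log n))$, gives a deviation $O\bigl(\sqrt{\vartheta p\,xy\,(x+y)\log n}+(x+y)\log n\bigr)$. We need this to be at most $\tfrac{\eta}{2}(\vartheta p)^{k-1}n\sqrt{xy}$.

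\emph{Hard regime.} The hard regime is large sets, $x=y\approx n$. There the requirement becomes $\sqrt{\vartheta p n\log n}\lesssim(\vartheta p)^{k-1}n$, i.e.\ $(\vartheta p)^{2k-3}\gtrsim \log n/n$. This is precisely what the assumption $\rho p\ge Cn^{-1/(2k-3)}\log n$ provides: it gives $\vartheta p\ge \sigma' Cn^{-1/(2k-3)}$, and since the hypothesis carries a spare factor $\log n$, $C$ large absorbs the logarithm.

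For small sets we cannot afford the $(x+y)\log n$ term directly. Instead we bound $\ee_{G_\vartheta}(X,Y)$ crudely by degrees or by $\ee_G(X,Y)$ together with the $G$-discrepancy. Unbalanced pairs, say $x\le y$, are handled by the standard reduction: either split off the regime where $\vartheta p\,y$ dominates, or verify the weaker form of bijumbledness that suffices for Morris' theorem.

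I expect this case analysis of the discrepancy of $G_\vartheta$ across set sizes to be the main obstacle. In particular, checking that the bound $\beta'=\eta(\vartheta p)^{k-1}n$ is attained uniformly, including at small and unbalanced set sizes, may require replacing global bijumbledness by a truncated version (only for $|X|,|Y|$ above some threshold) and confirming that Morris' proof tolerates this. Finally, once $a_n=1-o(1)$ is established, the whole statement follows from Theorem~\ref{thm:spread-transference}, with $k\mid n$ ensuring that $\cA_n\subseteq\binom{E(G)}{(k-1)n/2}$.
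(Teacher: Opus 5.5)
Your outer reduction (auxiliary rate $\vartheta=\sigma'\rho/\log n$, conditional selection via Theorem~\ref{thm:spread-transference}, and the final check $4C_{\rm tr}\sigma\le 1$) matches the paper. The uncoloured containment step, however, has a genuine gap: your claim that $G_\vartheta$ itself is a.a.s.\ $(\vartheta p,\eta(\vartheta p)^{k-1}n)$-bijumbled is false in general, because nothing controls $\Delta(G)$.

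Bijumbledness only gives $(1-p)\deg_G(v)\le\beta\sqrt{\deg_G(v)}$, so a vertex may have degree $D$ of order $\beta^2\gg pn$. For example, adding a star of size $\beta^2/16$ to $\mathbb G(n,p)$ preserves $(p,\beta)$-bijumbledness once $\beta$ is a large multiple of $\sqrt{pn}$. This is compatible with all hypotheses, e.g.\ for $\rho=1$ and $p=n^{-1/(2k-3)}(\log n)^2$. Testing $G_\vartheta$ on $X=\{v\}$ and $Y=N_{G_\vartheta}(v)$ shows that its discrepancy parameter is at least of order $\sqrt{\vartheta D}$, i.e.\ of order $\sqrt{\vartheta}\,\beta$. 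For $\beta$ at the permitted maximum, this exceeds $\eta(\vartheta p)^{k-1}n$ by a factor of order $\sqrt{\log n/\rho}$.

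For the same reason, your ``crude degree bounds'' for small and unbalanced sets cannot work, since $\Delta(G_\vartheta)$ need not be $O(\vartheta pn)$. Your fallback of a truncated bijumbledness is also not available without reproving Theorem~\ref{thm:morris}, which is used as a black box requiring the full discrepancy condition.

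The missing idea is the paper's preliminary step:
\begin{itemize}
\item Apply Lemma~\ref{lem:degree-regularisation} (valid since $\beta\le\alpha pn$) to obtain a spanning $(p,5\beta)$-bijumbled $J\subseteq G$ with $c_1^\circ pn\le\delta(J)\le\Delta(J)\le c_2^\circ pn$.
\item Percolate $J$ rather than $G$; a $K_k$-factor of $J_\vartheta$ lies in $G_\vartheta$.
\item Invoke Lemma~\ref{lem:jumbled-sparsification}, which handles all set sizes at once. Writing $t=\vartheta p$, it yields the parameter $5\vartheta\beta+C_2(\sqrt{\vartheta\Delta(J)}+\sqrt{\log n})\le 5\vartheta\beta+2C_2\sqrt{c_2^\circ tn}$.
\end{itemize}
The first term is absorbed as in your computation, with $\gamma=\eta b^{k-2}/10$ covering the factor $5$. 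The second is absorbed via $t^{2k-3}n\ge(bC)^{2k-3}$. Morris' theorem is then applied with $\zeta=c_1^\circ/2$.

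Separately, your ``hard regime'' arithmetic is wrong as written. The spare $\log n$ in $\rho p\ge Cn^{-1/(2k-3)}\log n$ is consumed by $\vartheta=\sigma'\rho/\log n$. You therefore only get $(\vartheta p)^{2k-3}n\ge(\sigma'C)^{2k-3}$, a constant, and no constant $C$ can absorb a factor $\log n$. The spurious requirement $(\vartheta p)^{2k-3}\gtrsim\log n/n$ comes from bounding the number of pairs $(X,Y)$ by $\exp(O((x+y)\log n))$. For $x,y$ of order $n$ there are at most $4^n$ pairs, and with the count $\binom nx\binom ny$ the condition becomes $(\vartheta p)^{2k-3}n\gtrsim\eta^{-2}$, which the hypothesis does supply. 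This is consistent with Lemma~\ref{lem:jumbled-sparsification}, whose fluctuation term $\sqrt{\vartheta\Delta(J)}$ carries no logarithmic factor.
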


The additional factor $(\rho/\log n)^{k-2}$ arises from the auxiliary
retention rate $\vartheta=b\rho/\log n$, where $b > 0$ is some constant, used to construct the spread measure. The discrepancy estimate after this sampling contains the term $\vartheta\beta$, whereas Morris' theorem at density $\vartheta p$ permits discrepancy of order
$(\vartheta p)^{k-1}n$. Comparing these terms requires the original
bound $\beta=O(\vartheta^{k-2}p^{k-1}n)$, which explains the stated
factor. Taking $\rho=1$ removes the final percolation but leaves this
auxiliary sampling in the proof, and hence the polylogarithmic loss.
This is a sufficient condition for the method to apply, not a claimed necessary condition
for rainbow clique factors.

\begin{theorem}[Spread form of Theorem~\ref{thm:exact-trees}]
\label{thm:spread-trees}
For every $\alpha\in(0,1]$, there exist constants
$C\coloneqq C(\alpha)>0$ and $\gamma\coloneqq\gamma(\alpha)>0$
such that, for every fixed integer $\Delta\geq2$, the following
holds. Let $L_{n,\Delta} = \Delta^{5\sqrt{\log n}}$, let $G$ be an $n$-vertex $(p,\beta)$-bijumbled graph, and let $\rho \coloneqq \rho(n) \in(0,1]$ satisfy 
$$
 \delta(G)\geq\alpha pn,\quad
 \beta\leq\gamma\frac{pn}{L_{n,\Delta}},\quad \textrm{and} \quad
 \rho pn\geq C L_{n,\Delta}(\log n)^2.
$$
Then, for each prescribed $T\in\cT(n,\Delta)$, the family of
copies of $T$ in $G$ supports
a $q_T$-spread measure satisfying
\begin{equation}\label{eq:q_T}
 q_T\leq C\frac{L_{n,\Delta}\log n}{pn}
      =o\left(\frac1{\log n}\right),
\end{equation}
uniformly over $T$. Moreover, following an $[n-1]$-uniform colouring of $G$, it holds that 
$$
 \sup_{T\in\cT(n,\Delta)}
 \PP[\text{$G_\rho$ admits no rainbow copy of $T$}]=o(1).
$$
\end{theorem}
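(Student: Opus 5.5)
We prove Theorem~\ref{thm:spread-trees} by applying Theorem~\ref{thm:spread-transference} with $\Omega_n=E(G)$, $h_n=n-1$, and $\cA_n$ the family of edge sets of copies of a fixed $T\in\cT(n,\Delta)$ in $G$. Take the auxiliary retention rate $\vartheta:=b L_{n,\Delta}\log n/(pn)$, where $b=b(\alpha)$ is a large constant. It then suffices to show that $G_\vartheta$ contains a copy of $T$ with probability $a=1-o(1)$, uniformly over $T$. This yields a $q_T$-spread measure with $q_T=\vartheta/a\le 2b L_{n,\Delta}\log n/(pn)$.

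The hypothesis $\beta\le\gamma pn/L_{n,\Delta}$ forces $pn=\Omega(L_{n,\Delta}^2)$ (Observation~\ref{obs:density-floor}). Hence $q_T=O(\log n/L_{n,\Delta})=o(1/\log n)$, since $L_{n,\Delta}=\Delta^{5\sqrt{\log n}}$ grows faster than any power of $\log n$; this gives \eqref{eq:q_T} and also ensures $\vartheta\le1$. The rainbow conclusion then follows from the second part of Theorem~\ref{thm:spread-transference}: we need $\rho\ge 2C_{\rm tr}q_T\log n$, which holds once $\rho pn\ge C L_{n,\Delta}(\log n)^2$ with $C\ge 4C_{\rm tr}b$. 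Because all bounds are uniform in $T$, the supremum is $o(1)$.

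The main step is to show that $G_\vartheta$ is a.a.s.\ a $C'$-expander with $C'\ge L_{n,\Delta}$; Theorem~\ref{thm:han-yang} then shows that $G_\vartheta$ is $\cT(n,\Delta)$-universal, and in particular contains $T$. Write $K:=2L_{n,\Delta}$ and verify the two properties for $G_\vartheta$ with expansion factor $K$.

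\emph{(E2).} Let $X,Y$ be disjoint with $|X|,|Y|\ge n/(2K)$. Bijumbledness gives $e_G(X,Y)\ge p|X||Y|-\beta\sqrt{|X||Y|}\ge \tfrac12 p|X||Y|$, provided $\beta\le \tfrac12 p\sqrt{|X||Y|}$. This holds because $\sqrt{|X||Y|}\ge n/(2K)$ and $\beta\le\gamma pn/L_{n,\Delta}$ with $\gamma$ small. The probability that no such edge survives is at most $\exp(-\vartheta p|X||Y|/2)\le\exp(-\vartheta pn^2/(8K^2))$. A union bound over at most $4^n$ pairs requires $\vartheta p n\gg K^2$, i.e.\ $b\log n\, L_{n,\Delta}\gg L_{n,\Delta}^2$, which fails. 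I therefore expect (E2) to be the main obstacle. The fix is to union-bound only over sets of size exactly $s=n/(2K)$ and to use the stronger estimate $e_G(X,Y)\ge p s^2/2$. The failure probability is then $\binom{n}{s}^2 e^{-\vartheta p s^2/2}\le \exp\bigl(2s\log(eK)-\vartheta p s^2/2\bigr)$. With $s=n/(2K)$, we have $\vartheta p s/2=b\log n/4\cdot(L_{n,\Delta}/K)$, which is of order $b\log n$ and therefore exceeds $2\log(eK)=O(\sqrt{\log n})$. So (E2) holds.

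\emph{(E1).} For small sets, first use Chernoff and a union bound over vertices (valid since $\vartheta\alpha pn\ge b\alpha L_{n,\Delta}\log n$) to get $\delta(G_\vartheta)\ge \vartheta\alpha pn/2$. Bijumbledness gives $e_G(X)\le p|X|^2+\beta|X|$ for every $X$. Combining this with a union bound over pairs $(X,Z)$ with $|Z|<K|X|$, one shows that every $X$ of size $|X|<n/(2K)$ satisfies $|X\cup\Gamma_{G_\vartheta}(X)|\ge (K+1)|X|$. The argument splits into two ranges. For tiny $X$ (with $|X|\le \beta/(pK)$-type sizes), use the minimum degree of $G_\vartheta$ together with the sparsity of $G_\vartheta$ inside sets of size $(K+1)|X|$. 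For larger $X$, apply the same $e^{-\vartheta p|X||Y|}$ counting as in (E2), using $|Y|\ge n/(2K)$-type sets. The factor $L_{n,\Delta}$ in $\vartheta$ is precisely what pays for the $\log\binom{n}{K|X|}$ entropy in the union bound. The constants $b$, $\gamma$ and $C$ are then chosen in this order, depending only on $\alpha$.
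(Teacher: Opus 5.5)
Your proposal is correct and follows the paper's proof. The shared steps are: an auxiliary retention rate of order $L_{n,\Delta}\log n/(pn)$; a.a.s.\ expansion of $G_\vartheta$ with factor of order $L_{n,\Delta}$, fed into Theorem~\ref{thm:han-yang}; Observation~\ref{obs:density-floor} for the $o(1/\log n)$ bound; and both assertions of Theorem~\ref{thm:spread-transference}.

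The only difference is that you re-derive the expansion by hand, whereas the paper invokes Lemma~\ref{lem:percolated-expansion} with $R=\lceil L_{n,\Delta}\rceil$ and $\vartheta=(R+3)\log n/(\kappa pn)$. That lemma's (E1) argument avoids your min-degree-plus-sparsity detour for small sets: it bounds $e_G(X,V(G)\setminus(X\cup Y))\geq\kappa pn|X|$ deterministically and union-bounds $(1-\vartheta)^{e_G(X,Z)}$ directly. Also, in your large-$X$ range the non-neighbourhood of $X$ must have size $\Omega(n)$ (it does, being at least $n/4$), not merely $n/(2K)$ as your wording suggests; only then does the factor $L_{n,\Delta}$ in $\vartheta$ pay for the $n^{(K+1)|X|}$ entropy.
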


\begin{theorem}[Spread form of Theorem~\ref{thm:exact-hamilton}]
\label{thm:spread-hamilton}
For every $\alpha\in(0,1]$, there exist constants
$C\coloneqq C(\alpha)>0$ and $\gamma\coloneqq\gamma(\alpha)>0$
such that the following holds.
Let $G$ be an $n$-vertex $(p,\beta)$-bijumbled graph and let $\rho \coloneqq \rho(n) \in(0,1]$ satisfy
$$
 \delta(G)\geq\alpha pn,\quad
 \beta\leq\gamma pn,\quad \textrm{and} \quad
 \rho pn\geq C(\log n)^2.
$$
Then, the family of Hamilton cycles of $G$ supports a $C\log n/(pn)$-spread measure. Moreover, following an $[n]$-uniform colouring of $G$, the percolation $G_\rho$ a.a.s. admits a rainbow Hamilton cycle.
\end{theorem}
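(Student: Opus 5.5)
We derive the statement from Theorem~\ref{thm:spread-transference} with $\Omega_n=E(G)$, $\cA_n$ the edge-sets of Hamilton cycles of $G$, $h_n=n$, and auxiliary rate $\vartheta_n:=b\log n/(pn)$, where $b=b(\alpha)$ is a large constant. Since $\rho pn\ge C(\log n)^2$ and $\rho\le1$, we have $pn\ge C(\log n)^2$, so $\vartheta_n\le b/(C\log n)=o(1)$. Suppose we show that $a_n=\PP[G_{\vartheta_n}\text{ is Hamiltonian}]=1-o(1)$. Then the first part of Theorem~\ref{thm:spread-transference} shows that the Hamilton cycles of $G$ support a $q_n$-spread measure with $q_n=\vartheta_n/a_n\le 2b\log n/(pn)$, which is at most $C\log n/(pn)$ once $C\ge 2b$. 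For the rainbow conclusion we need $\rho\ge 2C_{\rm tr}q_n\log n$, and $q_n\log n\le 2b(\log n)^2/(pn)\le 2b\rho/C$. So it suffices to take $C\ge 4bC_{\rm tr}$ as well as $C\ge 2b$, and the second part of Theorem~\ref{thm:spread-transference} then yields the rainbow Hamilton cycle in $G_\rho$ under an $[n]$-uniform colouring.

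The main step is proving $a_n=1-o(1)$. We show that $G':=G_\vartheta$ is a.a.s.\ a $K$-expander for a constant $K$ large enough for Theorem~\ref{thm:expander-hamilton}.

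\textbf{(E2).} For disjoint $X,Y$ with $|X|,|Y|\ge n/(2K)$, bijumbledness with $\gamma$ small in terms of $K$ gives $e_G(X,Y)\ge p|X||Y|-\gamma pn\sqrt{|X||Y|}\ge p|X||Y|/2$. The probability that none of these edges survives is at most $\exp(-\vartheta p n^2/(8K^2))=\exp(-\Omega(bn\log n))$. This beats the $4^n$ choices of $(X,Y)$.

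\textbf{Degrees.} Chernoff and a union bound give $\deg_{G'}(v)\ge \alpha\vartheta pn/2=\alpha b\log n/2$ for all $v$, provided $b$ is large in terms of $\alpha$.

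\textbf{(E1) for small sets.} For $|X|<n/(2K)$, suppose $|\Gamma_{G'}(X)|<K|X|$ and set $Y=X\cup\Gamma_{G'}(X)$, so that $|Y|\le (K+1)|X|$. Every $G'$-edge at $X$ stays inside $Y$, so $e_{G'}(X,Y)\ge |X|\alpha b\log n/2$. We split by the size of $X$.
\begin{itemize}
\item For $|X|\le \varepsilon n$, the $G$-count satisfies $\ee_G(X,Y)\le p|X||Y|+\gamma pn\sqrt{|X||Y|}$. The expected number of surviving edges is then at most $\vartheta(K+1)p|X|^2+\vartheta\gamma pn\sqrt{K+1}\,|X|$. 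With $\gamma$ small relative to $\alpha/\sqrt{K+1}$ and $\varepsilon$ small, this is a small constant fraction of $|X|\alpha b\log n/2$.
\item A Chernoff upper tail of the form $\exp(-\Omega(|X|\alpha b\log n))$ then beats the $\binom{n}{|X|}\binom{n}{K|X|}\le n^{(K+1)|X|}$ choices of $(X,Y)$, provided $b$ is large compared to $K$.
\item For $\varepsilon n\le|X|<n/(2K)$, expansion follows from (E2)-type reasoning. Apply the edge statement with a set of size about $\varepsilon n$ against $V\setminus Y$ (with $\varepsilon$ chosen relative to $K$): this gives a $G'$-edge from $X$ to outside $Y$, a contradiction.
\end{itemize}

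I expect the small-set expansion to be the main obstacle. It requires tuning the order of constants $K\to\gamma,\varepsilon\to b\to C$ so that the discrepancy term $\vartheta\gamma pn$ stays below the degree scale $\alpha\vartheta pn$ uniformly in $|X|$. Chen--Chen--Han--Zhao's expansion argument already proves exactly this Hamiltonicity of $G_\vartheta$ for $\vartheta\ge(1+\eta)\log n/(\alpha pn)$ and $\beta\le\varepsilon pn$, so that result may alternatively be invoked directly.
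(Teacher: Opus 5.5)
Your proposal is correct and follows the paper's proof. You take an auxiliary rate $\vartheta=\Theta(\log n/(pn))$, show a.a.s.\ constant expansion of $G_\vartheta$, and combine it with Theorem~\ref{thm:expander-hamilton} to get $a_n=1-o(1)$; you then apply both assertions of Theorem~\ref{thm:spread-transference} with the same constant bookkeeping ($C\geq 2b$ and $C\geq 4bC_{\rm tr}$).

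The only difference is that you re-derive the percolated expansion of Corollary~\ref{lem:hamilton-percolation} yourself. You handle small sets via a degree lower bound in $G_\vartheta$ and an upper-tail bound on edges inside $X\cup\Gamma_{G_\vartheta}(X)$, whereas the paper shows $e_G(X,V\setminus(X\cup Y))\geq\kappa pn|X|$ and bounds the probability that all these edges are deleted; both arguments work.
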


\subsection{Organisation}

In Section~\ref{sec:preliminaries}, we collect various results facilitating our arguments. 
The proofs of the sublinear-surplus results can all be found in
Section~\ref{sec:sublinear-proofs}; perfect matchings, prescribed trees,
and Hamilton cycles are treated in
Sections~\ref{sec:sublinear-matching},
\ref{sec:sublinear-trees}, and~\ref{sec:sublinear-hamilton},
respectively. In Section~\ref{sec:limitations}, we explain why the McDiarmid-type coupling argument 
fails to yield a sublinear palette surplus for clique factors
under the stated bijumbledness hypotheses.

Our proof of Theorem~\ref{thm:spread-transference} appears in
Section~\ref{sec:proof-transference}. 
The applications to exact palettes are developed in
Section~\ref{sec:exact-rbw-proofs}, with the proofs for perfect
matchings, clique factors, prescribed trees, and Hamilton cycles
presented in Sections~\ref{sec:proof-matching},
\ref{sec:proof-spread-clique-factor}, \ref{sec:proof-trees},
and~\ref{sec:proof-hamilton}, respectively.

\section{Preliminaries}
\label{sec:preliminaries}

\subsection{Balanced bipartitions}\label{sec:balanced-partition}

The following result asserts that graphs of even order with sufficiently large minimum degree admit balanced bipartitions preserving any fixed proportion below one half of every vertex degree to the opposite part; its proof is delegated to Appendix~\ref{sec:lem:balanced-bipartition}.

\begin{lemma}[Balanced bipartition]
\label{lem:balanced-bipartition}
Let $\alpha > 0$ and $r\in(0,1/2)$ be fixed, let $n=2m$, and let $G$ be an $n$-vertex graph with minimum degree 
$\delta(G) \geq \alpha pn$, where $p \coloneqq p(n) \in(0,1]$ satisfies $pn = \omega(\log n)$. Then, for all sufficiently large $n$, the graph $G$ admits a bipartition $V(G)=U\mathbin{\dot\cup}W$ such that $|U|=|W|=m$ and
$$
 \deg_H(v)\geq r\deg_G(v)
 \qquad\text{for every }v\in V(G),
$$
where $H\coloneqq G[U,W]$. In particular, $\delta(H)\geq r \alpha pn$.
Moreover, if $G$ is $(p,\beta)$-bijumbled, then $H$ is bipartite $(p,\beta)$-bijumbled.
\end{lemma}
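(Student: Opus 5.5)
Take a uniformly random balanced bipartition and use concentration plus a union bound. Choose $U$ uniformly among all $m$-subsets of $V(G)$ and set $W:=V(G)\setminus U$. Fix a vertex $v$ and condition on which side contains $v$, say $v\in U$. Then $\deg_H(v)=|N_G(v)\cap W|$, and the set $W$ is a uniformly random $m$-subset of the $n-1$ vertices other than $v$. So $\deg_H(v)$ is hypergeometric with mean
$$
\deg_G(v)\cdot\frac{m}{n-1}\geq\frac{\deg_G(v)}{2}.
$$
The case $v\in W$ is symmetric.

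Next apply concentration. Hypergeometric variables obey the same Chernoff lower-tail bounds as binomials (Hoeffding). Write $r=\tfrac12(1-\varepsilon)$ with $\varepsilon:=1-2r\in(0,1)$ fixed. Then
$$
\PP\bigl[\deg_H(v)<r\deg_G(v)\bigr]\leq\exp\bigl(-\varepsilon^2\deg_G(v)/4\bigr)\leq\exp\bigl(-\varepsilon^2\alpha pn/4\bigr).
$$
Since $pn=\omega(\log n)$ and $\alpha,\varepsilon$ are fixed, the right-hand side is $n^{-\omega(1)}$. A union bound over the $n$ vertices shows the total failure probability is $o(1)<1$ for large $n$. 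Hence some balanced bipartition satisfies $\deg_H(v)\geq r\deg_G(v)$ for every $v$. The bound $\delta(H)\geq r\alpha pn$ then follows immediately from $\delta(G)\geq\alpha pn$.

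The bijumbledness claim is deterministic. Take $X\subseteq U$ and $Y\subseteq W$. These sets are disjoint, so
$$
e_H(X,Y)=e_G(X,Y)=\ee_G(X,Y),
$$
and the discrepancy inequality for $G$ applies verbatim to give $|e_H(X,Y)-p|X||Y||\leq\beta\sqrt{|X||Y|}$.

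The only point needing care is the concentration step. We must use a lower-tail bound valid for sampling without replacement, and the degree-dependent bound must beat the union bound uniformly in $v$. The minimum-degree hypothesis handles the second point. There is no other real obstacle.
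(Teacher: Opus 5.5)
Your proposal is correct and follows essentially the same route as the paper's proof. Both take a uniformly random balanced bipartition and note that the cross-degree is hypergeometric with mean $\frac{m}{n-1}\deg_G(v)\geq\frac12\deg_G(v)$. Both then apply the hypergeometric Chernoff bound with deviation $1-2r$ to obtain $\exp\bigl(-(1-2r)^2\alpha pn/4\bigr)$, take a union bound over the $n$ vertices, and transfer bijumbledness deterministically via $e_H(X,Y)=\ee_G(X,Y)$ for $X\subseteq U$, $Y\subseteq W$.
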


\subsection{Intrinsic lower bound on bijumbledness}
\label{sec:intrinsic-lower}

A minimum-degree condition on a bijumbled graph yields an
intrinsic lower bound on its bijumbledness parameter. The following
consequence is used in our proofs of both spanning-tree results.

\begin{observation}
\label{obs:density-floor}
Let $\alpha>0$, let $0<p\leq1$, and let $G$ be an $n$-vertex
$(p,\beta)$-bijumbled graph satisfying $\delta(G)\geq\alpha pn$.
Then,
$$
 \beta\geq(1-p)\sqrt{\alpha pn}.
$$
In addition, suppose that $b>0$ is fixed and
$\beta\leq bpn/L$, where $L := L(n)>0$ satisfies
$L=n^{o(1)}$ and $(\log n)^2=o(L)$.  Then,
$$
 \frac{pn}{L(\log n)^2}\longrightarrow\infty.
$$
\end{observation}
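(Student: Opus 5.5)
Take $X=\{v\}$ and $Y=N_G(v)$ for a single vertex $v$. Then $\ee_G(X,Y)=\deg_G(v)=:d$. With $|X|=1$ and $|Y|=d$, the discrepancy condition reads
$$
 d-pd\le\beta\sqrt{d},
$$
that is, $(1-p)\sqrt d\le\beta$. Since $d\ge\delta(G)\ge\alpha pn$, this gives $\beta\ge(1-p)\sqrt{\alpha pn}$ as claimed.

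If one wants to avoid a degree larger than $\delta(G)$, apply the same computation to a vertex of minimum degree, or to a subset of $N_G(v)$ of size exactly $\lceil\alpha pn\rceil$. The argument above already works for any $v$, so this is not needed.

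For the second assertion, combine the lower bound with $\beta\le bpn/L$:
$$
 (1-p)\sqrt{\alpha pn}\le \frac{bpn}{L},
 \qquad\text{hence}\qquad
 pn\ge \frac{\alpha(1-p)^2}{b^2}L^2 .
$$
When $1-p$ is bounded away from $0$, say $p\le1/2$, we get $pn/(L(\log n)^2)\ge \frac{\alpha}{4b^2}\cdot L/(\log n)^2\to\infty$, using $(\log n)^2=o(L)$.

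The step I expect to need care is the regime $p$ close to $1$, where $(1-p)$ may vanish and the lower bound on $\beta$ becomes useless. In that case I would argue directly: $p>1/2$ gives $pn\ge n/2$, so $pn/(L(\log n)^2)\ge n^{1-o(1)}/2\to\infty$, because $L=n^{o(1)}$ and $(\log n)^2=n^{o(1)}$. Splitting along a subsequence into the cases $p\le1/2$ and $p>1/2$ and combining them yields the limit. The hypothesis $L=n^{o(1)}$ is exactly what handles this dense case, and $(\log n)^2=o(L)$ handles the sparse case.
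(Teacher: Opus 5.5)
Your proof is correct and follows the paper's argument essentially verbatim. You apply the discrepancy condition to $X=\{v\}$ and its neighbourhood to get $(1-p)\sqrt{\deg_G(v)}\le\beta$. You then split the second assertion into $p\le 1/2$, using $(\log n)^2=o(L)$, and $p>1/2$, using $L=n^{o(1)}$; the paper phrases the subsequence step as the ratio being bounded below by two deterministic quantities that both tend to infinity.
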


\begin{proof}
Fix a vertex $v \in V(G)$ and let $D = \deg_G(v)>0$.
Applying bijumbledness to $X=\{v\}$ and
$Y=\Gamma_G(\{v\})$, for which $|Y|=\ee_G(X,Y)=D$,
we obtain
$$
 (1-p)D\leq\beta\sqrt D.
$$
Division by $\sqrt D$ and the minimum-degree assumption then imply
$\beta\geq(1-p)\sqrt D\geq(1-p)\sqrt{\alpha pn}$.

For the additional assertion, suppose first that $p\leq1/2$.
Combining the lower bound just proved with $\beta\leq bpn/L$
and squaring both sides of the inequality, we obtain
$$
 \frac12\sqrt{\alpha pn}\leq\frac{bpn}{L}
 \quad \textrm{and thus} \quad
 pn\geq\frac{\alpha L^2}{4b^2}.
$$
Consequently,
$$
 \frac{pn}{L(\log n)^2}
 \geq\frac{\alpha L}{4b^2(\log n)^2}
 \longrightarrow\infty.
$$
If $p>1/2$, then $pn>n/2$. Since, moreover, $L=n^{o(1)}$, it follows that
$$
 \frac{pn}{L(\log n)^2}
 >\frac{n}{2L(\log n)^2}
 \longrightarrow\infty.
$$
Both deterministic lower bounds tend to infinity, thus also
handling sequences along which $p$ crosses $1/2$.
\end{proof}

\subsection{McDiarmid palette coupling}\label{sec:coupling}

The aforementioned key result of~\cite[Lemma~2.1]{AignerHorevHefetzPersonTrushkin2026} reads as follows.

\begin{lemma}[McDiarmid-type palette coupling]
\label{lem:coupling}
Let $F$ be a fixed graph and let
$\cF\subseteq\binom{E(F)}h$ be a family of $h$-edge sets.  Colour
the edges of $F$ uniformly from a palette $\cK$ of size $Q$.
Let $S\subseteq\cK$ be of size $\ell\geq h$, and set
\begin{equation}
 \rho = \frac{\ell-h+1}{Q}.
 \label{eq:coupling-rate}
\end{equation}
Then,
\begin{align}
\PP\!\left[
  \text{some $A\in\cF$ is rainbow with all colours in $S$}
 \right]
 \geq
 \PP\!\left[F_\rho\text{ contains some $A\in\cF$}\right].
 \label{eq:coupling-conclusion}
\end{align}
\end{lemma}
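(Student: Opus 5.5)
The plan is a McDiarmid-style sequential replacement (interpolation) argument. We compare two random experiments on $E(F)$ and transform one into the other one edge at a time, without decreasing the probability of the target event.

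In the first experiment, each edge receives a uniform colour from $\cK$; we ask for some $A\in\cF$ that is rainbow with all colours in $S$. In the second, each edge is retained with probability $\rho$. To unify them, enlarge the palette: a retained edge gets a special ``joker'' colour $\star$, and a deleted edge gets the colour $\dagger$. Event $\mathcal E_0$ is then ``some $A\in\cF$ is rainbow in $S$''. Event $\mathcal E_m$ is ``some $A\in\cF$ has every edge coloured $\star$''. Jokers are allowed to repeat, but each is compatible with any rainbow condition.

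Order the edges $e_1,\dots,e_m$ of $F$. For $0\le i\le m$, define the hybrid colouring $\chi_i$: edges $e_1,\dots,e_i$ are independently $\star$ with probability $\rho$ and $\dagger$ otherwise, while edges $e_{i+1},\dots,e_m$ receive uniform colours from $\cK$. The hybrid event $\mathcal E_i$ asks for some $A\in\cF$ whose $\star$-edges and $\dagger$-edges satisfy the following: $A$ contains no $\dagger$-edge, and its non-joker edges carry distinct colours lying in $S$. At $i=0$ this is exactly the left-hand side of~\eqref{eq:coupling-conclusion}, and at $i=m$ it is exactly the right-hand side. It therefore suffices to prove $\PP[\mathcal E_{i-1}]\ge\PP[\mathcal E_i]$ for each $i$.

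For the step, condition on the colours of all edges other than $e=e_i$; these are identically distributed in $\chi_{i-1}$ and $\chi_i$. Given this conditioning there are three cases.
\begin{enumerate}
\item The event holds regardless of the colour of $e$, say because it holds via an $A\not\ni e$. This case contributes equally to both sides.
\item The event fails regardless of the colour of $e$. This case also contributes equally.
\item The event depends on the colour of $e$, so every witness contains $e$. Then in $\chi_i$ the event holds iff $e$ is $\star$, which has probability $\rho$. In $\chi_{i-1}$ it holds iff $e$ receives a colour $c\in S$ completing some witness.
\end{enumerate}

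In the third case, fix one witness $A\ni e$ (valid when $e$ is a joker). Its other $h-1$ edges use at most $h-1$ distinct non-joker colours from $S$. Hence every colour in $S$ outside those at most $h-1$ colours completes $A$ rainbowly. This gives at least $\ell-h+1$ good colours out of $Q$, so the probability is at least $(\ell-h+1)/Q=\rho$.

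This case analysis is the crux, and the main care needed is to check that the dichotomy is exhaustive. The key monotonicity point is that changing $e$ from $\dagger$ to $\star$ can only help, since $\star$ is universally compatible. Consequently the ``depends on $e$'' case really does reduce to ``holds iff $e$ is $\star$'' on the $\chi_i$ side. Summing over conditionings gives $\PP[\mathcal E_{i-1}]\ge\PP[\mathcal E_i]$, and chaining over $i=1,\dots,m$ yields~\eqref{eq:coupling-conclusion}.
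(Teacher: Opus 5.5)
Your proposal is correct. The $\star/\dagger$ interpolation, the monotonicity on the edge $e$ (a $\dagger$ on $e$ is never better and a $\star$ never worse than any genuine colour, which makes your three cases exhaustive), and the count of at least $\ell-h+1$ completing colours out of $Q$ together give $\PP[\mathcal E_{i-1}]\ge\PP[\mathcal E_i]$ with $\rho=(\ell-h+1)/Q$. The paper imports this lemma from its companion work without reproducing a proof, but its name and the citation of McDiarmid point to exactly this edge-by-edge replacement argument, so your route is essentially the intended one.
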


\subsection{Percolated expansion}

The next lemma asserts that, at sufficiently large retention rates, percolations of balanced bijumbled bipartite graphs a.a.s. satisfy Hall's condition; its proof is delegated to Appendix~\ref{sec:lem:percolated-Hall}. The adjustable constant in the retention condition preserves the leading palette-surplus constant required in Section~\ref{sec:sublinear-matching}.

\begin{lemma}[Percolated Hall condition for bijumbled graphs]
\label{lem:percolated-Hall}
For every $a>0$ and every $A>2/a$, there exists a constant
$\eta\coloneqq\eta(a,A)>0$ such that the following holds.
Let $H=(U,W;E)$ be a bipartite $(p,\beta)$-bijumbled graph with
$|U|=|W|=m$, where $m\to\infty$, and let
$\vartheta\coloneqq\vartheta(m)\in(0,1]$ satisfy
$$
 \delta(H)\geq apm,
 \qquad
 \beta\leq\eta pm,
 \qquad \textrm{and} \qquad
 -pm\log(1-\vartheta)\geq A\log m,
$$
where $-\log(1-\vartheta)=+\infty$ when $\vartheta=1$.
Then, $H_\vartheta$ a.a.s. admits a perfect matching, uniformly over all such graphs and retention rates.
In particular, the conclusion holds whenever
$\vartheta pm\geq A\log m$.
\end{lemma}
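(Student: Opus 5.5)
We prove Lemma~\ref{lem:percolated-Hall} by checking Hall's condition for $H_\vartheta$ from the side $U$, split by the size of the set. We call a pair $(X,Y)$ with $X\subseteq U$, $Y\subseteq W$, $|X|=k$ and $|Y|=k-1$ a \emph{violating pair} if $H_\vartheta$ has no edges between $X$ and $W\setminus Y$. A perfect matching exists iff there is no violating pair on either side. By symmetry it suffices to bound the probability of a violating pair with $X\subseteq U$ and to take a union bound with the mirror case $X\subseteq W$. Throughout we write $\lambda:=-\log(1-\vartheta)$, so that a fixed set of $e$ edges of $H$ is entirely deleted with probability $(1-\vartheta)^e=e^{-\lambda e}$.

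\emph{Large sets.} We fix a small constant $\epsilon=\epsilon(a,A)>0$ and first handle $\epsilon m\le k\le(1-\epsilon)m$. Here $|X|=k$ and $|W\setminus Y|=m-k+1$ are both at least $\epsilon m$. Bijumbledness gives
\[
e_H(X,W\setminus Y)\ge p\,k(m-k+1)-\beta\sqrt{k(m-k+1)}\ge \tfrac12 p\,k(m-k+1),
\]
provided $\eta\le\epsilon/2$. The number of such pairs is at most $\binom mk\binom m{k-1}\le 4^m$. The failure probability for a fixed pair is at most $\exp(-\tfrac12\lambda p\epsilon^2m^2/2)$, and since $\lambda pm\ge A\log m$ this is $\exp(-\Omega(m\log m))$. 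The union bound over all pairs is therefore $o(1)$.

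\emph{Small sets.} Next take $1\le k\le\epsilon m$. Each $x\in X$ has at least $apm-(k-1)$ neighbours in $W\setminus Y$. Counting edges from $X$ into $W\setminus Y$ directly only gives about $k\,apm$, and combining this with $\lambda pm\ge A\log m$ yields a failure probability of $m^{-Aak}$ per pair. The number of pairs is about $m^{2k}$, so the union bound needs $Aa>2$. This is exactly the hypothesis $A>2/a$, and it is the source of the constant.

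The catch is that the degree loss $k-1$ is only negligible when $k=o(pm)$. For $k$ comparable to $pm$ we use bijumbledness instead: the edges inside $X\times Y$ number at most $pk^2+\beta k\le(\epsilon+\eta)pmk$. Hence at least $(a-\epsilon-\eta)pmk$ edges leave $X$ into $W\setminus Y$, and we choose $\epsilon,\eta$ small relative to $A a-2$ so the exponent still beats $2k\log m$ with some slack. Concretely, we bound the union sum by
\[
\sum_{k}\Bigl(m^{2}\,e^{-(a-2\epsilon)\lambda pm}\Bigr)^k\le\sum_k m^{-\delta k}=o(1),
\]
where $\delta=(a-2\epsilon)A-2>0$.

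\emph{Large $X$ via the other side.} For $k\ge(1-\epsilon)m$ we pass to the complement: a violating pair yields $Y'=W\setminus Y$ of size $m-k+1\le\epsilon m+1$ whose $H_\vartheta$-neighbourhood lies in $U\setminus X$, a set of size $m-k$. This is the small-set case with the roles of $U$ and $W$ exchanged, and we handle it with the same estimate.

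\emph{Remaining checks and main obstacle.} The ``in particular'' statement follows since $\lambda\ge\vartheta$. The main obstacle is the small-set regime: the losses from bijumbledness and from $Y$ must be absorbed into the slack $Aa-2$ uniformly in $k$, and it is the small-set calculation that dictates how $\epsilon$ and then $\eta$ are chosen.
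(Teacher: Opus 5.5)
Your proof is correct and follows essentially the same route as the paper. Both arguments take a union bound over Hall-violating pairs. Small sets are handled by the minimum degree minus a bijumbledness bound on $e_H(X,Y)$, which leaves the slack $(a-O(\epsilon))A-2>0$, and large sets are handled directly by the discrepancy inequality. The only cosmetic difference is that the paper first swaps sides to reduce to $|S|\le(m+1)/2$ and uses two regimes, whereas you use three regimes and swap sides only when $k\ge(1-\epsilon)m$.
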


The following lemma supplies the expansion required in both palette
regimes, allowing the expansion factor to depend on $n$; its proof is
delegated to Appendix~\ref{app:tree-expansion-percolation}.

\begin{lemma}[Percolated expansion]
\label{lem:percolated-expansion}
\label{lem:tree-percolation}
\label{lem:sublinear-surplus-expansion-percolation}
Fix $\alpha>0$, let $R := R(n) \geq 2$ be an integer satisfying
$\log R = o(\log n)$, and set
$$
 \kappa = \min\left\{\frac\alpha2,\frac18\right\}, \quad
 \gamma_0 = \min\left\{
  \frac{\alpha}{16},\frac{\sqrt\alpha}{16},\frac14
 \right\},
 \quad \textrm{and} \quad
 \gamma_R = \frac{\gamma_0}{R}.
$$
Let $0 < p := p(n) \leq1$, and let $G$ be an $n$-vertex
$(p,\beta)$-bijumbled graph satisfying
$$
 \delta(G)\geq\alpha pn
 \quad\text{and}\quad
 \beta\leq\gamma_Rpn.
$$
If $\vartheta=\vartheta(n)\in(0,1)$ satisfies
\begin{equation}
 \label{eq:sublinear-expansion-retention-condition}
 -\kappa pn\log(1-\vartheta)\geq(R+3)\log n,
\end{equation}
then a.a.s. $G_\vartheta$ is an $R$-expander.
\end{lemma}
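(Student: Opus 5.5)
We prove Lemma~\ref{lem:percolated-expansion} by checking the two expander properties (E1) and (E2) separately for $G_\vartheta$, using the bijumbledness of $G$ to control the host edge counts and a union bound over vertex sets for the percolation. Throughout we write $s:=-\log(1-\vartheta)$. An edge set of size $e$ loses all its edges in $G_\vartheta$ with probability $(1-\vartheta)^e=e^{-se}$. The hypothesis \eqref{eq:sublinear-expansion-retention-condition} reads $\kappa pn\, s\ge (R+3)\log n$.

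\emph{Property (E2).} Fix disjoint $X,Y$ with $|X|,|Y|\ge n/(2R)$. Bijumbledness gives
$$e_G(X,Y)\ge p|X||Y|-\beta\sqrt{|X||Y|}\ge \tfrac12 p|X||Y|,$$
since $\beta\le\gamma_R pn\le \tfrac12 p\sqrt{|X||Y|}$ once $\gamma_0\le 1/4$ and $\sqrt{|X||Y|}\ge n/(2R)$. The probability that $G_\vartheta$ has no $X$--$Y$ edge is therefore at most $\exp(-s\,p n^2/(8R^2))$. It suffices to take $|X|=|Y|=\lceil n/(2R)\rceil$, and there are at most $4^n$ such pairs. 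We have $s\,pn^2/(8R^2)\ge (R+3)n\log n/(8\kappa R^2)$, which dominates $n\log 4$ provided $R=n^{o(1)}$ (note $\kappa\le 1/8$, so the constant is at least $1$). So this step needs $R\log 4\ll\log n$. If $R$ is large this fails, and we then refine the estimate: for large $R$ the exponent $n\log n/R$ versus $n$ requires $R=o(\log n)$, which is not given. The fix is to use the union bound only over sets of the minimal relevant size, counted as $\binom{n}{n/2R}^2\le (2eR)^{n/R}$. This costs $(n/R)\log(2eR)$, which is beaten by $n\log n/(8R)$ because $\log R=o(\log n)$.

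\emph{Property (E1).} Fix $X$ with $|X|=x<n/(2R)$ and suppose $|\Gamma_{G_\vartheta}(X)|<Rx$. Then there is a set $Y\supseteq\Gamma_{G_\vartheta}(X)$ with $|Y|=Rx$ and $Y\cap X=\emptyset$ such that no edge of $G$ between $X$ and $Z:=V\setminus(X\cup Y)$ survives. We bound $e_G(X,Z)$ from below via the degrees: $e_G(X,Z)\ge \alpha pn x-\ee_G(X,X\cup Y)$. Bijumbledness gives
$$\ee_G(X,X\cup Y)\le p x(R+1)x+\beta x\sqrt{R+1}.$$
Since $(R+1)x<n$ roughly, the first term is at most about $p x n\cdot (R+1)x/n$. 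I expect this to be the main obstacle, because for $x$ close to $n/(2R)$ this term is $\approx pnx/2$, which competes with $\alpha pnx$ when $\alpha<1/2$.

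I would split on the size of $x$. For $x\le \alpha n/(4(R+1))$ the density term is at most $\alpha pnx/4$. The $\beta$ term is $\le\gamma_0 pn x\sqrt{R+1}/R\le \alpha pnx/4$, using $\gamma_0\le\alpha/16$. Together these give $e_G(X,Z)\ge \alpha pnx/2\ge\kappa pnx$. For larger $x$ (still $<n/(2R)$), I would instead lower-bound $e_G(X,Z)$ directly by discrepancy, $p x|Z|-\beta\sqrt{x|Z|}$ with $|Z|\ge n/4$. This gives $\ge pnx/8$, where the $\beta$ term is controlled since $x\gtrsim \alpha n/R$ and $\gamma_R\le\sqrt\alpha/(16R)$. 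This is the role of the constant $\kappa\le 1/8$.

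In either range the failure probability for a fixed pair $(X,Y)$ is at most
$$e^{-s\kappa pnx}\le n^{-(R+3)x}.$$
The number of pairs is at most $n^x n^{Rx}$, so the union bound over $x\ge1$ gives $\sum_x n^{-2x}=o(1)$. Combining this with (E2) shows that $G_\vartheta$ is a.a.s. an $R$-expander.
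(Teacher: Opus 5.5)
Your proposal is correct and follows the paper's proof essentially step for step. For (E1) you use the same split at $x=\alpha n/(4(R+1))$: below it, the degree sum minus $\ee_G(X,X\cup Y)$; above it, direct discrepancy with $|Z|\ge n/4$. You then use the same $n^{(R+1)x}$ versus $n^{-(R+3)x}$ union bound. For (E2) you use the same count $\binom{n}{t}^2\le(2eR)^{2t}$ with $t=\lceil n/(2R)\rceil$ against $e_G\ge pn^2/(8R^2)$, closed by $\log R=o(\log n)$. Your initial $4^n$ count for (E2) is too crude, as you noticed, so only the corrected version should be kept (with $2t\le n/R+2$ rather than $n/R$, which is harmless).
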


Since constant expansion suffices for Hamiltonicity by  Theorem~\ref{thm:expander-hamilton}, it is convenient to state and use the following immediate consequence of Lemma~\ref{lem:percolated-expansion}.

\begin{corollary}[Percolated constant expansion]
\label{lem:hamilton-percolation}
For every $\alpha>0$ and every fixed integer $R\geq2$,
there exists a constant $\eta\coloneqq\eta(\alpha,R)>0$
such that the following holds, with $\kappa$ as in
Lemma~\ref{lem:percolated-expansion}.
Let $G$ be an $n$-vertex $(p,\beta)$-bijumbled graph satisfying
$$
 \delta(G)\geq\alpha pn
 \quad\text{and}\quad
 \beta\leq\eta pn.
$$
If $\vartheta := \vartheta(n)\in(0,1)$ satisfies
$$
 -\kappa pn\log(1-\vartheta)\geq(R+3)\log n,
$$
then $G_\vartheta$ is a.a.s. an $R$-expander.
\end{corollary}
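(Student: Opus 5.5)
This corollary is a direct specialisation of Lemma~\ref{lem:percolated-expansion} to a constant expansion factor, so the plan is simply to check that lemma's hypotheses for the fixed integer $R\geq2$.

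First, the growth condition on $R$. Taking $R(n)\equiv R$ constant, we have $R\geq2$ and $\log R=O(1)=o(\log n)$, as required.

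Second, the choice of constant. With $\gamma_0=\min\{\alpha/16,\sqrt\alpha/16,1/4\}$ as in Lemma~\ref{lem:percolated-expansion}, set
$$
 \eta(\alpha,R):=\gamma_R=\frac{\gamma_0}{R},
$$
which is positive and depends only on $\alpha$ and $R$. Then $\beta\leq\eta pn$ is exactly the hypothesis $\beta\leq\gamma_Rpn$ of the lemma. The hypothesis $\delta(G)\geq\alpha pn$ transfers verbatim.

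Third, the retention condition. With $\kappa=\min\{\alpha/2,1/8\}$, the assumption $-\kappa pn\log(1-\vartheta)\geq(R+3)\log n$ is literally \eqref{eq:sublinear-expansion-retention-condition}. Also $\vartheta\in(0,1)$, as the lemma requires.

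Lemma~\ref{lem:percolated-expansion} then gives that $G_\vartheta$ is a.a.s.\ an $R$-expander. There is no genuine obstacle; the only point needing care is that $\eta$ is independent of $n$, which holds because $R$ is fixed.
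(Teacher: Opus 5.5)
Your proposal is correct and follows the paper's route exactly. The paper records this corollary as an immediate consequence of Lemma~\ref{lem:percolated-expansion}, applied with the constant choice $R(n)\equiv R$ (for which $\log R=o(\log n)$ holds trivially), and taking $\eta(\alpha,R)=\gamma_R=\gamma_0/R$ as you do makes the hypotheses of the corollary coincide with those of the lemma.
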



\section{Rainbow embeddings with sublinear palette surplus}
\label{sec:sublinear-proofs}

In this section, we prove Theorems~\ref{thm:sublinear-surplus-perfect-matching}, ~\ref{thm:sublinear-surplus-tree}, and~\ref{thm:sublinear-surplus-hamilton}; all proofs employ the McDiarmid-type palette coupling lemma, namely Lemma~\ref{lem:coupling}, established in~\cite{AignerHorevHefetzPersonTrushkin2026}. 

\subsection{Perfect matchings: proof of Theorem~\ref{thm:sublinear-surplus-perfect-matching}} \label{sec:sublinear-matching}

The argument consists of three stages. First, we pass to a balanced bipartite
spanning subgraph of the host graph whose minimum degree remains of order $pn$. Second, the palette coupling lemma, namely Lemma~\ref{lem:coupling}, is applied to the resulting bipartite graph, thereby reducing the rainbow problem to the existence of a perfect matching in an independently percolated
subgraph. Third, the probability of a Hall obstruction arising in the percolated bipartite graph is controlled by Lemma~\ref{lem:percolated-Hall}.

\medskip
\noindent\textbf{Step 1. A balanced bipartite spanning subgraph.}\nopagebreak[4]
Set $m = n/2$ as well as
$$
 K = \frac2\alpha + \eta,
 \qquad
 \lambda = \frac12 \left(\alpha+\frac2K\right),
 \qquad \textrm{and} \qquad
 r = \frac{\lambda}{2\alpha}.
$$
Since $K>2/\alpha$, it follows that $\lambda K>2$ and $0<r<1/2$. Let $\eta_{\rm H} := \eta_{\rm H}(\lambda,K) > 0$ be the constant obtained from Lemma~\ref{lem:percolated-Hall} and set $c = \eta_{\rm H}/2$.

Applying Lemma~\ref{lem:balanced-bipartition} to $G$ with the above value of $r$ yields a bipartition
$$
 V(G)=U\mathbin{\dot\cup}W
 \qquad \textrm{such that} \qquad |U| = |W| = n/2,
$$
satisfying $\deg_H(v)\geq r\deg_G(v)$ for every $v\in V(G)$, where $H :=  G[U,W]$. Lemma~\ref{lem:balanced-bipartition} also ensures that $H$ is bipartite $(p,\beta)$-bijumbled. Moreover, setting $D = pm$ and using the equality $2 r \alpha = \lambda$, we obtain
$$
 \delta(H) \geq r\delta(G)\geq r\alpha pn=\lambda D,
 \qquad
 \beta\leq cpn=\eta_{\rm H}D.
$$


\medskip
\noindent\textbf{Step 2. Palette coupling.}\nopagebreak[4]
Let 
$$
 s_{\rm pm} =
 \max\left\{0,
 \left\lceil
 (m-1)\exp\left(\frac{K\log m}{D}\right)-m
 \right\rceil\right\}
$$
denote the \emph{sufficient integer surplus}.
Let $\cK$ be a palette of size $Q \geq m + s_{\rm pm}$, and let $\cF$ be
the family of perfect matchings of $H$.
Lemma~\ref{lem:coupling}, applied with
$$
 F=H, \qquad S=\cK, \qquad h=m, \qquad \textrm{and} \qquad \ell=Q,
$$
asserts that
\begin{equation} \label{eq::McDiarmidMatching}
\PP[H\text{ admits a rainbow perfect matching}]
 \geq \PP[H_\rho\text{ admits a perfect matching}],    
\end{equation}
where the retention rate in~\eqref{eq:coupling-rate} is
$$
 \rho := \frac{Q-m+1}{Q} \geq \frac{s_{\rm pm}+1}{m+s_{\rm pm}},
$$
where the above inequality holds since $1-(m-1)/Q$ is an increasing function of $Q$. It remains to prove that $H_\rho$ a.a.s. admits a perfect matching  and that $s_{\rm pm}$ has the asserted asymptotic value.

\medskip
\noindent\textbf{Step 3. The retention condition and Hall's Theorem.}\nopagebreak[4]
We start by verifying that all conditions of Lemma~\ref{lem:percolated-Hall} are satisfied. Its minimum degree and discrepancy conditions were verified in Step 1. Setting $L_\rho = -D \log(1-\rho)$, its remaining hypothesis is
\begin{equation}
 L_\rho \geq K \log m.
 \label{eq:sublinear-matching-retained-degree}
\end{equation}
For an arbitrary integer $u\geq0$, set
$$
 \rho_u = \frac{u+1}{m+u}
 \quad\textrm{so that}\quad
 1-\rho_u=\frac{m-1}{m+u}.
$$
Since $D>0$ and the exponential function is increasing, the equivalences
\begin{align*}
 -D\log(1-\rho_u)\geq K\log m
 &\quad\Longleftrightarrow\quad
 D\log\left(\frac{m+u}{m-1}\right)\geq K\log m\\*
 &\quad\Longleftrightarrow\quad
 \frac{m+u}{m-1}\geq\exp\left(\frac{K\log m}{D}\right)\\*
 &\quad\Longleftrightarrow\quad
 u\geq(m-1)\exp\left(\frac{K\log m}{D}\right)-m
\end{align*}
hold for $m\geq2$.
By definition, $s_{\rm pm}$ is the least non-negative integer satisfying the last inequality.
Since $\rho\geq\rho_{s_{\rm pm}}$ and $-\log(1-\rho)$ is increasing in $\rho$, condition~\eqref{eq:sublinear-matching-retained-degree} is satisfied.
Lemma~\ref{lem:percolated-Hall} thus ensures that
\begin{equation} \label{eq::aasMatching}
 \PP[H_\rho\text{ admits a perfect matching}] = 1 - o(1).
\end{equation}
It then follows by~\eqref{eq::McDiarmidMatching} and~\eqref{eq::aasMatching} that 
\begin{align*}
\PP[G \textrm{ admits a rainbow perfect matching}] &\geq \PP[H \textrm{ admits a rainbow perfect matching}] \\
&\geq \PP[H_\rho\textrm{ admits a perfect matching}] = 1 - o(1).
\end{align*}

It remains to prove that 
$$
 s_{\rm pm}
 =\left(\frac2\alpha+\eta+o(1)\right)\frac{\log n}{p}.
$$
Let
$$
 x = \frac{K\log(n/2)}{pn/2}=o(1),
$$
where the last equality holds by our assumption that $pn = \omega(\log n)$.
Note that
\begin{align*}
 s_{\rm pm}
 &=(m-1)\exp(x)-m+O(1)
 =m\left(\exp(x)-1\right)-\exp(x)+O(1)\\
 &=mx+O(mx^2)-\exp(x)+O(1)
 =(1+o(1))mx \\
 &=(1+o(1))\frac{K \log m}{p} 
 = \left(\frac2\alpha+\eta+o(1)\right)\frac{\log n}{p},
\end{align*}
where the third equality holds since $\exp(x)-1=x+O(x^2)$, and the fourth equality holds since $x = o(1)$ and $mx \to \infty$.
\hfill $\square$

\subsection{Prescribed rainbow trees: proof of Theorem~\ref{thm:sublinear-surplus-tree}}
\label{sec:sublinear-trees}

The argument consists of three stages. First, we choose the expansion
parameter and apply the palette coupling lemma to the prescribed tree.
Second, Lemma~\ref{lem:percolated-expansion} and
Theorem~\ref{thm:han-yang} establish the required uncoloured
containment probability. Third, the intrinsic density estimate
determines the order of the palette surplus.


\medskip
\noindent\textbf{Step 1. Parameters and palette coupling.} Given $\alpha$ and $\Delta$ per the theorem, fix $T\in\cT(n,\Delta)$. Let
$$
    \kappa =
    \min\left\{\frac\alpha2,\frac18\right\}
$$
be the constant defined in
Lemma~\ref{lem:percolated-expansion}, and set
$$
    c =
    \min\left\{
        \frac\alpha8,
        \frac{\sqrt\alpha}{8},
        \frac12
    \right\}.
$$
Set
$$
    L = L_{n,\Delta},
    \qquad
    R = \lceil L\rceil,
    \qquad \textrm{and} \qquad
    x_n = \frac{(R+3)\log n}{\kappa pn}.
$$
Since $\Delta\geq2$ is fixed, it follows that
$$
    L
    =
    \Delta^{5\sqrt{\log n}}
    =
    \exp\left(O_\Delta(\sqrt{\log n})\right)
    =
    n^{o(1)}.
$$
Moreover, $L\to\infty$, and thus $\log R = o(\log n)$, implying that 
$R$ is an admissible expansion parameter for Lemma~\ref{lem:percolated-expansion}. Additionally, note that the condition on $\beta$ imposed by Lemma~\ref{lem:percolated-expansion} is met. Indeed, our choice of $c$, the inequality $R \leq 2L$, and the assumption $\beta \leq cpn/(4L)$ made in the statement of Theorem~\ref{thm:sublinear-surplus-tree} imply that  
\begin{align*}
    \frac{\beta}{p n}
    \leq
    \frac{c}{2R}
    = \frac{1}{R} \min\left\{
        \frac\alpha{16},
        \frac{\sqrt\alpha}{16},
        \frac14
    \right\} = \gamma_R,
\end{align*}
where $\gamma_R$ is as in Lemma~\ref{lem:percolated-expansion}.


\medskip

Let
$$
    s_T =
    \max\left\{
        0,
        \left\lceil
            (n-2)\exp(x_n)-(n-1)
        \right\rceil
    \right\}
$$
denote the \emph{sufficient integer surplus}; we prove that every integer palette size $Q \geq n-1 + s_T$ suffices. Moreover, at the end of the proof we verify that
$$
    s_T
    =
    \left(\frac1\kappa+o(1)\right) \frac{L_{n,\Delta}\log n}{p}
$$
holds. Let $\cK$ be a palette of size $Q\geq n-1+s_T$, and let
$$
    \cF_T
    :=
    \left\{
        E(T'):\ T'\subseteq G\text{ and }T'\cong T
    \right\}
    \subseteq
    \binom{E(G)}{n-1}.
$$
Applying Lemma~\ref{lem:coupling} with
$$
    F = G,
    \qquad
    S = \cK,
    \qquad
    h = n-1,
    \qquad \textrm{and} \qquad
    \ell = Q
$$
yields
$$
    \PP\left[
        G\text{ admits a rainbow copy of }T
    \right]
    \geq
    \PP\left[
        G_\rho\text{ admits a copy of }T
    \right],
$$
where
$$
    \rho
    :=
    \frac{Q-(n-1)+1}{Q}
    =
    \frac{Q-n+2}{Q}.
$$

\medskip
\noindent\textbf{Step 2. Percolated expansion and tree containment.} It is immediate from the definition of $s_T$ that
$$
    n-1+s_T
    \geq
    (n-2)\exp(x_n).
$$
Since $Q\geq n-1+s_T$, it follows by the definition of $\rho$ that
\begin{align*}
    1-\rho
    =
    \frac{n-2}{Q}
    \leq
    \frac{n-2}{n-1+s_T}
    \leq
    \exp(-x_n).
\end{align*}
Taking logarithms and using the definition of $x_n$, we
obtain
$$
    -\kappa pn\log(1-\rho)
    \geq
    \kappa pn x_n
    =
    (R+3)\log n.
$$
All the hypotheses of
Lemma~\ref{lem:percolated-expansion} are therefore
satisfied, and consequently
$$
    \PP\left[
        G_\rho\text{ is an }R\text{-expander}
    \right]
    =
    1-o(1).
$$
Since $R \geq \Delta^{5\sqrt{\log n}}$, it follows by  Theorem~\ref{thm:han-yang} that every $R$-expander is $\cT(n,\Delta)$-universal. Hence,
\begin{align*}
    \PP\left[
        G\text{ admits no rainbow copy of }T
    \right]
    &\leq
    \PP\left[
        G_\rho\text{ admits no copy of }T
    \right]\\
    &\leq
    \PP\left[
        G_\rho\text{ is not an }R\text{-expander}
    \right] = o(1).
\end{align*}

The error term does not depend on the particular prescribed tree. Therefore,
$$
    \sup_{T\in\cT(n,\Delta)}
    \PP\left[
        G\text{ admits no rainbow copy of }T
    \right]
    =
    o(1).
$$


\medskip
\noindent\textbf{Step 3. The order of the palette surplus.} It remains to determine the order of $s_T$. Observation~\ref{obs:density-floor}, applied with $b=c/4$ and $L=L_{n,\Delta}$, implies
$$
    \frac{pn}{L(\log n)^2}\longrightarrow\infty.
$$
Since $R=(1+o(1))L$, it follows from the definition of $x_n$ that
$$
    x_n
    =O_\alpha\left(\frac{L\log n}{pn}\right)
    =o(1).
$$

It is easy to see that $s_T > 0$ holds for sufficiently large $n$; we may thus assume that $s_T =
(n-2)\exp(x_n)-(n-1)+O(1)$. Using $\exp(x_n)-1=x_n+O(x_n^2)$, we thus obtain
\begin{align*}
    s_T
    &=
    (n-2)\exp(x_n)-(n-1)+O(1) =
    (n-2)(\exp(x_n)-1)-1+O(1)\\
    &=
    nx_n+O(nx_n^2)+O(1) =
    (1+o(1))nx_n =
    (1+o(1))
    \frac{(R+3)\log n}{\kappa p}\\
    &=
    \left(\frac1\kappa+o(1)\right)
    \frac{L\log n}{p}
\end{align*}
where in the fourth equality we use $n x_n^2 = o(n x_n)$ which holds since  $x_n = o(1)$, and the last equality holds since $R=(1+o(1))L$. \hfill $\square$

\subsection{Hamilton cycles: proof of Theorem~\ref{thm:sublinear-surplus-hamilton}}
\label{sec:sublinear-hamilton}

The argument consists of three stages. First, we fix a constant expansion
factor and use the palette coupling lemma to pass to an uncoloured
percolation. Second, Corollary~\ref{lem:hamilton-percolation} and
Theorem~\ref{thm:expander-hamilton} ensure Hamiltonicity of that
percolation. Third, we estimate the resulting palette surplus.

\medskip
\noindent\textbf{Step 1. Parameters and palette coupling.} Fix an integer $R_0 \geq 2$, sufficiently large for the assertion of 
Theorem~\ref{thm:expander-hamilton} to hold, and let $\alpha>0$. Let $\kappa$ and $\eta$ be the constants appearing in Corollary~\ref{lem:hamilton-percolation} when applied with respect to $\alpha$ and $R_0$. Set 

$$
    c = \eta
    \qquad\text{and}\qquad
    A = \frac{R_0+3}{\kappa}.
$$
Since $R_0$ is an absolute constant, both $A$ and $c$ depend only on $\alpha$. Set 
\begin{equation}
y_n = \frac{A\log n}{pn} \qquad \textrm{and} \qquad
    s_H := s_H(n,p) =
    \max\left\{
        0,
        \left\lceil
            (n-1)\exp\left(y_n\right)-n
        \right\rceil
    \right\}.
    \label{eq:sublinear-hamilton-surplus}
\end{equation}
We prove that every integer palette size $Q \geq n + s_H$ suffices.  At
the end of the proof we show that
$$
    s_H =
    (A+o(1))\frac{\log n}{p}.
$$

Let $\cK$ be a palette of size $Q\geq n+s_H$, and let
$$
    \cF_H =
    \left\{
        E(H):\ H\subseteq G\text{ is a Hamilton cycle}
    \right\}
    \subseteq
    \binom{E(G)}n.
$$
Applying Lemma~\ref{lem:coupling} with
$$
    F=G,
    \qquad
    S=\cK,
    \qquad
    h=n,
    \qquad \textrm{and} \qquad
    \ell = Q
$$
yields
$$
    \PP\left[
        G\text{ admits a rainbow Hamilton cycle}
    \right]
    \geq
    \PP\left[
        G_\rho\text{ admits a Hamilton cycle}
    \right],
$$
where
$$
    \rho
    :=
    \frac{Q-n+1}{Q}.
$$

\medskip
\noindent\textbf{Step 2. Percolated expansion and Hamiltonicity.} It is immediate from the definition of $s_H$ that
$$
    n + s_H \geq (n-1) \exp\left(y_n\right).
$$
Since $Q \geq n+s_H$, it follows by the definition of $\rho$ that
\begin{align*}
    1-\rho
    =
    \frac{n-1}{Q}
    \leq
    \frac{n-1}{n+s_H}
    \leq
    \exp\left(-y_n\right).
\end{align*}
Taking logarithms and using the definitions of $y_n$ and
$A$, we obtain
\begin{align*}
    -\kappa pn\log(1-\rho)
    \geq
    \kappa pn y_n
    =
    \kappa A\log n
    =
    (R_0+3)\log n.
\end{align*}
Moreover, $\beta \leq c p n = \eta p n$ holds by the premise of the theorem and our choice of $c$. Hence, all the hypotheses of Corollary~\ref{lem:hamilton-percolation} hold, and consequently
$$
    \PP\left[
        G_\rho\text{ is an }R_0\text{-expander}
    \right]
    =
    1-o(1).
$$
It follows by the choice of $R_0$ and Theorem~\ref{thm:expander-hamilton} that every $R_0$-expander is Hamiltonian. It then follows by Lemma~\ref{lem:coupling} that
\begin{align*}
    \PP\left[
        G\text{ admits a rainbow Hamilton cycle}
    \right]
    &\geq \PP\left[
        G_{\rho}\text{ admits a Hamilton cycle}
    \right] \\
    &\geq
    \PP\left[
        G_\rho\text{ is an }R_0\text{-expander}
    \right]
    =
    1-o(1).
\end{align*}

\medskip
\noindent\textbf{Step 3. The order of the palette surplus.} It remains to estimate $s_H$.  The assumption $pn=\omega(\log n)$
implies
$$
    y_n
    =
    \frac{A \log n}{pn}
    =
    o(1).
$$

It is easy to see that $s_H > 0$ holds for sufficiently large $n$; we may thus assume that $s_H = (n-1)\exp\left(y_n\right)-n+O(1)$. Using $\exp\left(y_n\right)-1=y_n+O(y_n^2)$, we obtain
\begin{align*}
    s_H
    &=
    (n-1)\exp\left(y_n\right)-n+O(1) =
    (n-1)\bigl(\exp\left(y_n\right)-1\bigr)-1+O(1)\\
    &=
    ny_n+O(ny_n^2)+O(1) =
    (1+o(1))ny_n =
    (A+o(1))\frac{\log n}{p},
\end{align*}
where in the fourth equality we use $n y_n^2 = o(n y_n)$ which holds since  $y_n = o(1)$. \hfill $\square$

\subsection{Clique factors: limitations of the McDiarmid-type coupling approach} \label{sec:limitations}

In order to prove a sublinear-surplus result for clique factors in the spirit of Theorems~\ref{thm:sublinear-surplus-perfect-matching}, \ref{thm:sublinear-surplus-tree}, and~\ref{thm:sublinear-surplus-hamilton}, we require Lemma~\ref{lem:coupling} and a result ensuring the a.a.s. appearance of clique factors in percolated bijumbled graphs. The most natural (and perhaps only) result of this nature is Theorem~\ref{thm:morris} which we complement with Lemmas~\ref{lem:degree-regularisation} and~\ref{lem:jumbled-sparsification} in order to handle the percolation. In this section we explain the limitations of this approach.

Fix an integer $k\ge3$ and $\alpha\in(0,1]$, and suppose that
$k\mid n$. A $K_k$-factor has
\[
h=\frac{k-1}{2}n
\]
edges. For a palette of size $Q=h+s$, where $s \ge 0$ is an integer,
Lemma~\ref{lem:coupling}, applied with the entire palette, yields
$$
\mathbb{P}[G\textrm{ admits a rainbow }K_k\textrm{-factor}]
\ge
\mathbb{P}[G_\rho\textrm{ admits a }K_k\textrm{-factor}],
$$
where $\rho = (s+1)/(h+s)$. Since $h=\Theta(n)$, a sublinear surplus is equivalent to $\rho=o(1)$. Hence, it would suffice to prove that a vanishing-rate percolation of the host a.a.s.\ admits an uncoloured $K_k$-factor. Suppose that $G$ is an $n$-vertex $(p,\beta)$-bijumbled graph, satisfying
\[
0 < p \leq 1, \qquad
\delta(G) \ge \alpha pn,
\qquad \textrm{and} \qquad
\beta\le cp^{k-1}n,
\]
where $0 < c \leq \alpha$ is a sufficiently small constant. Set $q = \rho p$, and suppose 
that $qn \gg \log n$. Let
\[
c_1^\circ := c_1^\circ(\alpha)
\qquad \textrm{and} \qquad
c_2^\circ := c_2^\circ(\alpha,\alpha)
\]
be the constants provided by Lemma~\ref{lem:degree-regularisation}. Since
\[
\beta \le cp^{k-1} n \le \alpha pn
\]
and $pn \ge qn \gg \log n$, Lemma~\ref{lem:degree-regularisation} may be applied to yield a spanning $(p,5\beta)$-bijumbled subgraph
$J \subseteq G$ satisfying
\[
c_1^\circ pn\le\delta(J)\le\Delta(J)\le c_2^\circ pn.
\]

Let $C_2$ be the constant provided by Lemma~\ref{lem:jumbled-sparsification}, applied with $K_0=2$, and set
\[
\zeta = c_1^\circ/2
\qquad \textrm{and} \qquad
\eta := \eta(k, \zeta) > 0,
\]
where $\eta$ is the constant appearing in Theorem~\ref{thm:morris}.
By Lemma~\ref{lem:jumbled-sparsification}, with probability at least $1-(2n)^{-2}$, the graph $J_\rho$ is $(q,\widehat\beta)$-bijumbled, where 
\[
\widehat\beta
=
5\rho\beta+
C_2\left(\sqrt{c_2^\circ qn}+\sqrt{\log n}\right).
\]
Moreover, a standard application of Chernoff's bound and a union bound yield
\[
\mathbb{P}\!\left[\delta(J_\rho)<\zeta qn\right]
\le
n\exp\left(-\frac{c_1^\circ qn}{8}\right).
\]
Consequently, both properties hold simultaneously with
probability at least
\[
1-(2n)^{-2}
-n\exp\left(-\frac{c_1^\circ qn}{8}\right)
=1-o(1).
\]

To apply Theorem~\ref{thm:morris} through this discrepancy estimate, we need
\[
\widehat\beta\le\eta q^{k-1}n.
\]
Setting
\[
r_n = \frac{\beta}{p^{k-1}n},
\]
the relevant ratio is
\begin{equation} \label{eq::rn}
\frac{\widehat\beta}{q^{k-1}n}
=
\frac{5r_n}{\rho^{k-2}}
+
\frac{C_2}{\sqrt{(\rho p)^{2k-3}n}}
\left(
\sqrt{c_2^\circ}
+
\sqrt{\frac{\log n}{\rho pn}}
\right).
\end{equation}

If we only require $r_n \leq c$, for some constant $c > 0$, then in order to obtain $\widehat\beta\le\eta q^{k-1}n$, using just the first term in~\eqref{eq::rn} shows that
\begin{equation}
\rho^{k-2} \ge \frac{5c}{\eta}
\label{eq:clique-coupling-inherited}
\end{equation}
must hold. This condition is incompatible with the desired $\rho=o(1)$.
The reason behind this failure is that thinning scales the inherited discrepancy by $\rho$, whereas the discrepancy permitted by Theorem~\ref{thm:morris} scales by $\rho^{k-1}$. Hence, decreasing $c$ but keeping it a constant lowers the bound in
\eqref{eq:clique-coupling-inherited}, but does not make it vanish.

Even if the inherited discrepancy is negligible, absorbing
the term $C_2\sqrt{c_2^\circ qn}$ requires
\begin{equation}
(\rho p)^{2k-3}n
\ge
\frac{C_2^2c_2^\circ}{\eta^2}.
\label{eq:clique-coupling-density}
\end{equation}
In particular, when $p = \Theta \left(n^{-1/(2k-3)} \right)$, this condition also
prevents $\rho=o(1)$. Conversely, a sufficiently large constant
lower bound on $(\rho p)^{2k-3}n$ controls the fluctuation term.
Such a bound also entails
\[
qn = \Omega \left(n^{(2k-4)/(2k-3)} \right) \gg \log n.
\]
Hence, the $\sqrt{\log n}$ term in $\widehat{\beta}$ and the minimum degree
concentration cause no additional difficulty.

There is also an intrinsic reason for the density scale in
\eqref{eq:clique-coupling-density}. By Observation~2.2, any
$(q,\beta_H)$-bijumbled graph $H$ with minimum degree
$\delta(H)\ge\zeta qn$ satisfies
\[
\beta_H\ge(1-q)\sqrt{\zeta qn}.
\]
Consequently, $\beta_H\le\eta q^{k-1}n$ implies
\[
q^{2k-3}n
\ge
\frac{\zeta(1-q)^2}{\eta^2}.
\]
Therefore, when $q=o(1)$, a positive constant lower bound on
$q^{2k-3}n$ is necessary for the simultaneous hypotheses of
Theorem~\ref{thm:morris}, independently of the particular concentration
estimate used above.


For comparison, the discrepancy scales used for perfect matchings and
Hamilton cycles are linear in $q$; their inherited-discrepancy
ratios do not incur the factor $\rho^{-(k-2)}$.
A sharper clique-factor robustness argument may instead exploit
the clique hypergraph. Such an argument must account for the
fact that a clique survives with probability
$\rho^{\binom{k}{2}}$, while cliques sharing graph edges have
dependent survival events.




Finally, the exact-palette argument in Section~\ref{sec:proof-spread-clique-factor} uses a separate auxiliary retention rate to construct a spread measure. It compensates for the same discrepancy comparison by imposing
a stronger host bound, and then applies spread transference.
Thus it neither contradicts nor removes the limitations of the
direct discrepancy verification discussed here.

\section{Exact-palette transference principle: Proof of Theorem~\ref{thm:spread-transference}}
\label{sec:proof-transference}

We prove Theorem~\ref{thm:spread-transference} in two steps.
Lemma~\ref{lem:robust-gives-spread} constructs a spread measure from
uncoloured containment in a random subset; Lemma~\ref{lem:spread-exact-palette}
then applies the rainbow threshold theorem of Han--Yuan~\cite[Theorem~3]{HanYuan2025} to this measure and identifies a sufficient final retention rate.

\begin{lemma}[Spread through robust containment]
\label{lem:robust-gives-spread}
Let $\Omega$ be finite and let $\cA\subseteq\binom{\Omega}{h}$, where
$h\geq1$. Form $R\subseteq\Omega$ by retaining every element
independently with probability $\vartheta\in(0,1]$.  
Then, $\cA$ supports a $(\vartheta/a)$-spread measure whenever
$$
 a:=\PP[\text{$R$ contains a member of $\cA$}]>0.
$$
\end{lemma}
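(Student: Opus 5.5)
The plan is the standard "conditional selection" construction. Condition on the event $\mathcal E$ that $R$ contains a member of $\cA$; this has probability $a>0$. On $\mathcal E$, fix a deterministic selection rule $\phi$: for instance, order $\cA$ by an arbitrary fixed linear order and let $\phi(R)$ be the first member of $\cA$ contained in $R$. Then define $\mu$ on $2^\Omega$ as the law of $\phi(R)$ under $\PP[\,\cdot\mid\mathcal E]$. By construction $\mu$ is supported on $\cA$, so $\mu(\cA)=1$.

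It remains to verify the spread condition. Fix $S\subseteq\Omega$. If $\phi(R)\supseteq S$, then in particular $S\subseteq\phi(R)\subseteq R$. Hence
$$
 \mu(\langle S\rangle)=\frac{\PP[\mathcal E\text{ and }S\subseteq\phi(R)]}{a}
 \leq\frac{\PP[S\subseteq R]}{a}=\frac{\vartheta^{|S|}}{a}.
$$
For $S=\emptyset$ the bound $\mu(\langle\emptyset\rangle)=1\le 1$ is trivial. For $|S|\geq1$ we use $a\leq1$, so $a^{-1}\leq a^{-|S|}$, and therefore $\vartheta^{|S|}/a\leq(\vartheta/a)^{|S|}$. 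This is exactly $(\vartheta/a)$-spreadness in the sense of Definition~\ref{def:spread}.

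There is essentially no obstacle here. The only point needing care is the exponent bookkeeping, namely converting the single factor $1/a$ into $a^{-|S|}$, which requires $|S|\geq1$ and $a\le1$. The empty set is handled separately. It is also worth noting that when $\vartheta/a>1$ the statement is vacuous but still true, and that the uniformity $h$ of $\cA$ plays no role in this lemma.
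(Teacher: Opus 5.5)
Your proposal is correct and matches the paper's proof: the paper also fixes a total order on $\cA$, takes $\mu$ to be the conditional law, given $\mathcal E$, of the first member of $\cA$ contained in $R$, and bounds $\mu(\langle S\rangle)\leq \PP[S\subseteq R]/\PP[\mathcal E]=\vartheta^{s}/a\leq(\vartheta/a)^{s}$ for $s\geq 1$ using $a\leq 1$. As in your argument, it handles $S=\varnothing$ separately.
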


\begin{proof}
Fix a total order on $\cA$. Conditioning on the event
$$
 \mathcal E:=\{\text{$R$ contains a member of $\cA$}\},
$$
let $F(R)$ denote the first member of $\cA$ contained in $R$, and let 
$\mu$ denote the conditional law of $F(R)$ given $\mathcal E$. For every
non-empty $S\subseteq\Omega$, set $s=|S|$. Then 
\begin{align*}
 \mu(\langle S\rangle) 
 & = \sum_{\substack{A \in \cA \\ S \subseteq A}} \mu(\{A\}) 
 = \sum_{\substack{A \in \cA \\ S \subseteq A}} \PP[ F(R) =A \mid\mathcal E] = \PP[S\subseteq F(R)\mid\mathcal E] \\
 & \leq \PP[S\subseteq R\mid\mathcal E]
 \leq \frac{\PP[S\subseteq R]}{\PP[\mathcal E]}
 =\frac{\vartheta^s}{a}
 \leq\left(\frac{\vartheta}{a}\right)^s,
\end{align*}
where the first inequality above is supported by the implication
$S\subseteq F(R)\Rightarrow S\subseteq R$, and the last inequality holds since $a \in (0,1]$ and $s \geq 1$. For $S=\varnothing$, both sides of the spread inequality equal 1. Hence, $\mu$ is the required spread measure. 
\end{proof}

\begin{remark}
Throughout all of our applications, the probability $a$ defined in Lemma~\ref{lem:robust-gives-spread} satisfies $a = 1-o(1)$; for such an $a$, conditioning has a negligible cost and a sampling rate of $\vartheta$ produces a $(1+o(1))\vartheta$-spread measure. 
\end{remark}

The next lemma applies the Han--Yuan~\cite[Theorem~3]{HanYuan2025} theorem (see Theorem~\ref{thm:han-yuan}) to a spread measure supported on  
uncoloured configurations, with an exact palette and independent final
retention. 

\begin{lemma}
\label{lem:spread-exact-palette}
There exists an absolute constant $C_{\rm tr}>0$ such that the following
holds. Let $h\to\infty$ through a sequence of integers; for each
$h$, let $\Omega$ be a finite set and let
$\cA\subseteq\binom{\Omega}{h}$ support a $q$-spread measure,
where $0<q\leq1/2$. Upon 
colouring the elements of $\Omega$ independently and uniformly
from the palette $[h]$,
$$
 \PP\bigl[ \Omega_\rho \;
   \text{contains a rainbow member of }\; \cA
 \bigr]
 = 1 - o_{h}(1)
$$
holds, whenever $\rho\in(0,1]$ satisfies
\begin{equation}\label{eq:q}
 2C_{\rm tr}\cdot q\log h\leq\rho.
\end{equation}
\end{lemma}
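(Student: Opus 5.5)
The plan is to derive the lemma from the Han--Yuan rainbow threshold theorem, stated later as Theorem~\ref{thm:han-yuan}. As recorded in Table~\ref{tab:spread-thresholds}, that theorem says: if an $h$-bounded multihypergraph is $\kappa$-spread, and $\Omega$ is retained at rate $\rho\geq C\log h/\kappa$ and coloured uniformly from $[h]$, then a.a.s.\ a rainbow edge survives. Here $C$ is an absolute constant. So the work lies in converting our $q$-spread probability measure $\mu$ on $\cA$ into a $\kappa$-spread multihypergraph with $\kappa$ comparable to $1/q$.

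\textbf{Step 1: discretise $\mu$.} We cannot use $\mu$ directly, since the Han--Yuan hypothesis concerns the uniform measure on a multiset of edges. Fix a large integer $M$. For each $A$ in the support of $\mu$, take $\lfloor M\mu(A)\rfloor$ copies of $A$, and let $\cH$ be the resulting multihypergraph. Its size is $N_M\geq M-|\cA|$. For every $S\subseteq\Omega$ the number of members of $\cH$ containing $S$ is at most $M\mu(\langle S\rangle)\leq Mq^{|S|}$. The uniform measure on $\cH$ is therefore $q'$-spread with $q'=q\,M/N_M$ (using $q'^{|S|}\geq q^{|S|}M/N_M$ for $|S|\geq1$). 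Taking $M\geq 2|\cA|$ gives $q'\leq 2q$. The case $S=\varnothing$ is trivial. Every member of $\cH$ has size $h$, so $\cH$ is $h$-bounded and $\kappa$-spread with $\kappa=1/(2q)\geq 1$, where the last inequality uses $q\leq1/2$.

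\textbf{Step 2: apply Han--Yuan.} Setting $C_{\rm tr}:=C$, the hypothesis \eqref{eq:q} reads $\rho\geq 2Cq\log h=C\log h/\kappa$, which is exactly the retention condition. The rainbow edge of $\cH$ it produces is a member of $\cA$ that survives in $\Omega_\rho$ and is rainbow under the $[h]$-colouring. The $o_h(1)$ comes directly from the theorem as $h\to\infty$, and it is uniform in $\Omega$ and $\cA$ because the theorem is uniform.

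\textbf{Main obstacle.} The delicate step is matching the exact form of \cite[Theorem~3]{HanYuan2025}. That means checking three things: whether it requires $\kappa\geq1$ or some mild lower bound (this is presumably where $q\leq1/2$ is used), whether it needs uniform rather than bounded edge size (satisfied here, since $\cA\subseteq\binom{\Omega}{h}$), and whether its palette is $[h]$ with exactly $h$ colours. If the theorem is instead stated for spread measures directly, Step~1 can be skipped.
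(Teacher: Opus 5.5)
Your proposal is correct and follows the paper's proof. Both discretise the $q$-spread measure into an $h$-uniform multihypergraph whose uniform law is $2q$-spread, then apply Han--Yuan with $r=k=h$, $\kappa=(2q)^{-1}$ and $C_{\rm tr}=C_{\rm HY}$. The only difference is in the discretisation step. The paper approximates $\mu$ by a positive rational probability vector within an $\ell^1$-slack $\delta_0$, exploiting the strict inequality $q^{|S|}<(2q)^{|S|}$. Your floor rounding with $M\geq 2|\cA|$ yields the same factor-two loss more directly and explicitly.
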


\begin{remark} \label{rem::lemma42}
In Lemma~\ref{lem:spread-exact-palette} the $\rho$-percolation is independent of the colouring, and the probability is over both experiments.
In particular, there exists an absolute constant
$c_{\rm sp}>0$ such that, whenever $q\log h\leq c_{\rm sp}$, taking $\rho=1$ a.a.s. yields a rainbow member of $\cA$ in the full coloured ground set.    
\end{remark}

The aforementioned result of Han and Yuan facilitating our proof is stated next. Recall that an $r$-bounded multihypergraph has edges of size at most $r$; it is $\kappa$-spread when the uniform law on its edge copies is
$\kappa^{-1}$-spread. 

\begin{theorem}[Han--Yuan {~\cite[Theorem~3]{HanYuan2025}}]
\label{thm:han-yuan}
There is an absolute constant $C_{\rm HY}>0$ such that the following
holds.  Let $\cH$ be an $r$-bounded $\kappa$-spread
multihypergraph on the finite ground set $\Omega$, and colour the elements of $\Omega$
independently and uniformly from $[k]$, where $k\geq r$.
Independently of the colouring, retain each element with probability
$\rho$. Then, $\Omega_\rho$ contains a rainbow edge of $\cH$ with
probability $1-o_{r}(1)$ whenever
$\rho\in(0,1]$ satisfies 
$$
 \rho\geq\frac{C_{\rm HY}\log r}{\kappa}.
$$
\end{theorem}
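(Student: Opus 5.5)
The approach is to lift the rainbow problem to an ordinary (uncoloured) spread problem on the product ground set $\Omega':=\Omega\times[k]$, and then rerun the Frankston--Kahn--Narayanan--Park / Park--Pham fragment argument in the colouring model. The one genuine adaptation is to handle the fact that each element of $\Omega$ receives \emph{exactly one} colour.

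\textbf{Step 1: the lifted hypergraph and its spread.} For every edge $A$ of $\cH$ (with multiplicity) and every injection $\sigma\colon A\to[k]$, put the set $A^\sigma:=\{(x,\sigma(x)):x\in A\}$ into a multihypergraph $\cH'$ on $\Omega'$. A rainbow edge of $\cH$ in the coloured set $\Omega_\rho$ is exactly an edge of $\cH'$ contained in the random set
$$
\Omega'_\rho:=\{(x,c(x)):x\in\Omega_\rho\}.
$$
Now fix $S'\subseteq\Omega'$ with $|S'|=s$. If two pairs in $S'$ share a first coordinate or a colour, then no edge of $\cH'$ contains $S'$. Otherwise, with $S$ the projection of $S'$ to $\Omega$, the uniform law on $\cH'$ gives
$$
\Pr[S'\subseteq A^\sigma]\leq \kappa^{-s}\cdot\frac{(k-s)!}{k!}\leq\left(\frac{e}{\kappa k}\right)^{s},
$$
using $k\ge r\ge s$ and $k!/(k-s)!\ge (k/e)^s$. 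Hence $\cH'$ is an $r$-bounded $(\kappa k/e)$-spread multihypergraph. In $\Omega'_\rho$, each pair $(x,c)$ is present with probability exactly $\rho/k$. The hypothesis $\rho\ge C_{\rm HY}\log r/\kappa$ is therefore precisely the FKNP rate $C\log r/(\kappa k/e)$ for $\cH'$, once $C_{\rm HY}\ge eC_{\rm FKNP}$.

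\textbf{Step 2: dealing with the non-product law.} The law of $\Omega'_\rho$ is not a product measure, because the pairs $(x,1),\dots,(x,k)$ are mutually exclusive. I would not try to invoke FKNP as a black box. Instead I would rerun its multi-round proof. Split the exposure into $m=O(\log r)$ independent rounds. In round $i$, each $x$ is retained with probability $\rho_i\approx\rho/m$ and receives an independent uniform colour. If $x$ is selected in several rounds, the later colours are discarded; by the law of total probability this reproduces exactly the law of $\Omega'_\rho$ with $1-\prod_i(1-\rho_i)\le\rho$. Within each round, the per-element colour choices are independent across $x$. So the key ingredient of the Park--Pham argument, namely the counting of ``bad'' minimal fragments $A^\sigma\setminus W$ given the current random set $W$, only ever involves sets $S'$ with distinct first coordinates. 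For such $S'$ the inclusion probability factorises as $\prod(\rho_i/k)$, which is the same as in the product model. The fragment lemma then shrinks the maximum fragment size by a constant factor per round. After $O(\log r)$ rounds, the fragments are empty with probability $1-o_r(1)$.

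\textbf{Step 3: the main obstacle, the final and collision-sensitive steps.} The delicate point is the interaction between discarded repeat selections and the minimal-fragment choice. A pair $(x,c)$ that would complete a fragment might be blocked because $x$ was already assigned a different colour in an earlier round. I would handle this in two ways. First, restrict every fragment to $A^\sigma$ whose first coordinates avoid the already-coloured elements; this is allowed because the spread bound above is insensitive to conditioning on a set of $o(|\Omega|)$ coloured elements, since it only counts injections into unused colours, which lose at most a factor $(k/(k-s))^s$ absorbed into $e^s$. Second, choose the round rates so that the total number of coloured elements at each stage is dominated by the retained set, as required. I expect verifying that the Park--Pham counting inequality survives this restriction, with only constant-factor losses in the spread parameter, to be the main technical step. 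If it fails, my fallback is the Bell--Frieze--Marbach style argument: take a uniform $m$-subset together with a random injective partial colouring, and replace their hypothesis $\kappa=\Omega(r)$ by the Park--Pham iteration above.
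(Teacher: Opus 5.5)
The paper does not prove this statement. It is quoted from Han--Yuan and used only as a black box in Lemma~\ref{lem:spread-exact-palette}. Read as a self-contained proof, your proposal has a genuine gap, located exactly where Han--Yuan's contribution over Bell--Frieze--Marbach lies.

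Step~1 is fine: the lift to $\Omega\times[k]$, the estimate $k!/(k-s)!\ge (k/e)^s$, and the rate $\rho/k$ per pair are correct. One small repair: for non-uniform $\cH$, choose $A$ uniformly and then $\sigma$, i.e.\ give $A^\sigma$ multiplicity $(k-|A|)!$. The plain uniform law on $\cH'$ overweights large edges and need not be spread.

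The problem is the assertion in Step~2 that the Park--Pham counting ``only ever involves sets $S'$ with distinct first coordinates''. That is false.
\begin{itemize}
\item In the counting, a bad pair $(W',A^\sigma)$ is encoded by $Z'=W'\cup T'$.
\item Whenever the minimising edge $(A')^{\sigma'}\subseteq A^\sigma\cup W'$ uses a pair $(x,\sigma'(x))$ with $x\in W$ and $c(x)\neq\sigma'(x)$, the set $Z'$ carries two pairs over $x$.
\item Such $Z'$ have probability zero in your model, so the product-model step $\PP[W'=Z'\setminus T']=\PP[W'=Z']\,((1-p)/p)^{|T'|}$ has no analogue.
\end{itemize}
Counting these $Z'$ directly, each collision costs a factor of order $|W|/\kappa$ (choose the recoloured element of $W$ and its new colour). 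A fresh element costs $|\Omega|/(|W|\kappa)$. Both are small only if $\kappa^2\gg|\Omega|$, which is a Bell--Frieze--Marbach-type hypothesis. In the paper's applications $|\Omega|=e(G)\asymp pn^2$ and $\kappa\asymp pn/\log n$, so $|\Omega|/\kappa^2\asymp(\log n)^2/p\to\infty$.

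Step~3 does not repair this. If fragments may not contain already-coloured elements, then any admissible $(A')^{\sigma'}\subseteq A^\sigma\cup W'$ must take the colours $c$ on $A'\cap W$. So $c$ has to map $A'\cap W$ injectively into $[k]\setminus\sigma(A'\setminus W)$.
\begin{itemize}
\item With an exact palette $k=r$ and $r$-uniform $\cH$ (the case the paper needs), this target set has exactly $j:=|A'\cap W|$ elements. For each fixed $A'$ the event therefore has probability $j!/k^j$.
\item Halving the fragment size forces $j\ge r/2$, so this probability is exponentially small in $r$.
\item Even for $k>r$, injectivity alone fails with probability $1-e^{-\Theta(j^2/k)}$.
\end{itemize}
So conditioning on revealed colours costs far more than the factor $(k/(k-s))^s$ you claim. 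What matters is $|A'\cap W|$ compared with $\sqrt k$, and later the number of colours already consumed by the partial edge, not $|W|$ compared with $|\Omega|$.

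Your fallback to Bell--Frieze--Marbach reinstates $\kappa=\Omega(r)$ and $|\Omega|\le\kappa^2r/(\log r)^5$. These are precisely the hypotheses whose removal the paper identifies as essential for its measures.

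In short, you correctly locate the obstacle: the one-colour-per-element constraint and the resulting collisions between fragment colours and already revealed colours. But you supply no mechanism to overcome it, and that mechanism is the whole content of the Han--Yuan theorem.
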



We are now in a position to prove Lemma~\ref{lem:spread-exact-palette}. 

\begin{proof}[Proof of Lemma~\ref{lem:spread-exact-palette}]
Set $C_{\rm tr} = C_{\rm HY}$, where $C_{\rm HY}$ is the constant guaranteed by Theorem~\ref{thm:han-yuan}. Let $\mu$ be a $q$-spread measure supported on $\cA$ (not necessarily uniform over $\cA$). Let
$$
\cA_0 = \{A \in \cA: \mu(\{A\}) >0\}
$$
denote the support of $\mu$; we shall approximate its positive masses by
positive rational numbers.
Let 
$$
\mathcal{S} = \left\{S \subseteq \Omega: S \neq \emptyset\; \text{and $S \subseteq A$ for some $A \in \cA_0$} \right\}
$$
be the family consisting of the non-empty sets contained in
at least one member of $\cA_0$.  For every $S\in\mathcal S$,
$$
 \mu(\langle S\rangle)
 \leq q^{|S|}<(2q)^{|S|}
$$
holds since $q>0$ and $|S|\geq1$. Since $\mathcal S$ is finite and
non-empty, we may set
$$
 \delta_0 =
 \min_{S\in\mathcal S}
 \bigl((2q)^{|S|}-\mu(\langle S\rangle)\bigr)>0.
$$

The vector $(\mu(\{A\}))_{A\in\cA_0}$ can be approximated by a rational
probability vector $(\nu(\{A\}))_{A\in\cA_0}$ with positive coordinates
and satisfying
$$
 \sum_{A\in\cA_0}|\nu(\{A\})-\mu(\{A\})|<\delta_0.
$$
Such a choice of vector $\nu$ exists as rational points are dense in the relative
interior of the finite probability simplex, namely 
$$
\left\{(x_1,\ldots,x_{|\cA_0|}):x_i >0, \sum_{i=1}^{|\cA_0|}x_i =1 \right\}.
$$
For every
$S\in\mathcal S$,
$$
 |\nu(\langle S\rangle)-\mu(\langle S\rangle)|
 \leq\sum_{A\in\cA_0}|\nu(\{A\})-\mu(\{A\})|<\delta_0,
$$
and thus 
$$
\nu(\langle S\rangle)\leq \mu(\langle S\rangle) + \delta_0 \leq (2q)^{|S|}.
$$
The same inequality holds when no member of $\cA_0$ contains $S$, and
equality holds if $S=\varnothing$. It follows that the rational probability vector $\nu$, approximating $\mu$, defines a $2q$-spread measure.


Since the coordinates of $\nu$ are positive rational numbers, there is a positive integer $M$ such that $\nu(\{A_i\}) = m_i/M$ for every $A_i \in \cA_0$, where $m_i$ is a positive integer. Since $\nu$ has total mass 1, it follows that $\sum_i m_i = M$. Define an $h$-uniform  multihypergraph $\cH$ on $\Omega$ by including $m_i$ copies of each
hyperedge $A_i\in\cA_0$. Choosing one of the $M$ edges of $\cH$ uniformly  has law $\nu$; hence, $\cH$ is $(2q)^{-1}$-spread.
Finally, \eqref{eq:q} and our choice of $C_{\rm tr}$ imply that
$$
 \frac{C_{\rm HY}\cdot \log h}{(2q)^{-1}}
 =2C_{\rm tr}\cdot q\log h\leq\rho.
$$

An application of Theorem~\ref{thm:han-yuan}, with $r=k=h$, $\kappa = (2q)^{-1}$, and $\rho$, yields a rainbow
hyperedge in $\Omega_\rho$ with probability $1-o(1)$.  
\end{proof}


We are now in a position to prove Theorem~\ref{thm:spread-transference}. 

\begin{proof}[Proof of Theorem~\ref{thm:spread-transference}]
We implement the two-stage argument described at the start of this
section. For each $n$, applying Lemma~\ref{lem:robust-gives-spread} 
with the ground set $\Omega_n$, family $\cA_n$, percolation rate $\vartheta_n$, as well as the assumption that the containment event has
probability $a_n > 0$, yields a $q_n$-spread measure $\mu_n$ supported on $\cA_n$, where $q_n := \vartheta_n/a_n$. 

This proves the first assertion of
Theorem~\ref{thm:spread-transference}.

\medskip
Proceeding to the second assertion of the theorem, let $C_{\rm tr}$ be the constant whose existence is guaranteed by Lemma~\ref{lem:spread-exact-palette}, and let $\rho_n \in (0,1]$ satisfy 
$$
 \rho_n \geq 2C_{\rm tr}q_n\log h_n.
$$
An application of Lemma~\ref{lem:spread-exact-palette} with
$\Omega=\Omega_n$, $\cA=\cA_n$, $h=h_n$, $q=q_n$,
and $\rho=\rho_n$, then yields
$$
 \PP\bigl[ (\Omega_n)_{\rho_n} \;
   \text{contains a rainbow member of }\; \cA_n
 \bigr]
 = 1 - o(1).
$$

\medskip

The final assertion of the theorem holds by Remark~\ref{rem::lemma42}.
\end{proof}


Our proof of Theorem~\ref{thm:spread-transference} employs arguments from various prior works which we wish to credit here. 
Conditional selection from a sampled subgraph appears
in~\cite[proof of Theorem~4.2 and Lemma~7.5]{PhamSahSawhneySimkin2022}.
Rational approximation followed by repeated-edge representation is
used in~\cite[Section~2]{FrankstonKahnNarayananPark2021} and explicitly
described in~\cite[Section~5.2]{HanYuan2025}; our proof records the
finite approximation argument and its factor-two relaxation.
Accordingly, the present transference theorem, namely Theorem~\ref{thm:spread-transference}, is a quantitative
assembly of these mechanisms.
It separates a host-dependent uncoloured estimate from the general treatment of Han--Yuan of random colours.

\section{Exact-palette rainbow spanning configurations}\label{sec:exact-rbw-proofs}

In this section, we prove Theorems~\ref{thm:spread-matching},
~\ref{thm:spread-clique-factor}, ~\ref{thm:spread-trees},
and~\ref{thm:spread-hamilton}; each is derived from
Theorem~\ref{thm:spread-transference}. Their immediate rainbow conclusions
are precisely Theorems~\ref{thm:exact-matching},
~\ref{thm:exact-clique-factor}, ~\ref{thm:exact-trees},
and~\ref{thm:exact-hamilton}, respectively.

\subsection{Perfect matchings: proof of Theorem~\ref{thm:spread-matching}}
\label{sec:proof-matching}

The proof consists of three stages. First, we establish uncoloured containment at an auxiliary retention rate, then determine the supported spread parameter, and finally verify the retention condition for the rainbow conclusion.

\medskip
\noindent\textbf{Step 1. Auxiliary percolation and uncoloured containment.}\nopagebreak[4]
Given $c>0$ as per Theorem~\ref{thm:spread-matching}, set $c_0 = \min\{c,1\}$ and $a = c_0/2$. Choose $A = 4/a > 2/a$, and let $\eta := \eta(a,A) > 0$ be the constant supplied by Lemma~\ref{lem:percolated-Hall}.
Set $\gamma = \eta/2$ and choose $C := C(c)$ to be sufficiently large so that
$$
 C\geq\max\left\{8A,8C_{\rm tr}A\right\},
$$
where $C_{\rm tr}$ is the absolute constant appearing in Theorem~\ref{thm:spread-transference}.

Let $G$ and $\rho$ satisfy the hypotheses of Theorem~\ref{thm:spread-matching} and set $m = n/2$. The inequalities 
$$
 pn\geq\rho pn\geq C(\log n)^2\geq C\log n
$$
hold whenever $n$ is sufficiently large; in particular, $pn=\omega(\log n)$.
Since, moreover, $\delta(G) \geq c_0 p n$, an application of
Lemma~\ref{lem:balanced-bipartition} with $\alpha = c_0$ and $r = 1/4$ yields a balanced partition $V(G) = U \mathbin{\dot\cup} W$ such that the graph $H := G[U,W]$ satisfies
$$
 \delta(H)\geq c_0pn/4=apm,
 \quad \text{as well as} \quad
 \beta\leq\gamma pn=\eta pm,
$$
and is bipartite $(p,\beta)$-bijumbled.

Set $\vartheta_n = 2 A \log n/(pn)$. Since $pn \geq C \log n$ and $C \geq 8A$, it follows that $0 < \vartheta_n \leq 1/4$. Moreover,
$$
 \vartheta_n pm = A \log n \geq A\log m.
$$
All hypotheses of the final assertion of Lemma~\ref{lem:percolated-Hall} are therefore satisfied, and thus $H_{\vartheta_n}$ admits a perfect matching with probability $1-o(1)$. Observe that every perfect matching of $H_{\vartheta_n}$ is also a perfect matching of $G_{\vartheta_n}$.


\medskip
\noindent\textbf{Step 2. The spread parameter.}\nopagebreak[4]
Let $\Omega_n = E(G)$ and let $\cA_n$ be the family of edge-sets of perfect matchings of $G$. Every member of $\cA_n$ has $h_n := n/2$ elements.
Step 1 establishes that
$$
 a_n := \PP[\text{$G_{\vartheta_n}$ contains a member of $\cA_n$}] = 1-o(1).
$$
In particular, $a_n \geq 1/2$ for all sufficiently large $n$; restricting the sequence to those $n$ ensures $a_n>0$ throughout, as required by Theorem~\ref{thm:spread-transference}.
The spread parameter satisfies
$$
 0<q_n:=\frac{\vartheta_n}{a_n}
 \leq\frac{4A\log n}{pn}\leq\frac12,
$$
where the last inequality holds since $pn\geq C\log n$ and $C\geq8A$.
Since $h_n\to\infty$, the first assertion of Theorem~\ref{thm:spread-transference} provides a $q_n$-spread measure on $\cA_n$.
As $C\geq4A$, this measure is also $C\log n/(pn)$-spread, proving the spread estimate asserted in Theorem~\ref{thm:spread-matching}.

\medskip
\noindent\textbf{Step 3. Rainbow containment after colouring and percolation.}\nopagebreak[4]
It remains to verify the percolation condition for the rainbow conclusion.
Since $q_n \leq 4A\log n/(pn)$, $C \geq 8 C_{\rm tr} A$, and $\rho pn\geq C(\log n)^2$, it follows that
$$
 2C_{\rm tr}q_n\log h_n
 \leq\frac{8C_{\rm tr}A\log n\log(n/2)}{pn}
 \leq\frac{C(\log n)^2}{pn}
 \leq\rho.
$$
The second assertion of Theorem~\ref{thm:spread-transference} therefore applies to an $[n/2]$-uniform colouring of $G$, followed by an independent $\rho$-percolation of $G$.
A rainbow perfect matching emerges in $G_\rho$ a.a.s., with probability taken over both experiments.
\hfill $\square$

\subsection{Clique factors}
\label{sec:proof-spread-clique-factor}

In this section, we prove Theorem~\ref{thm:spread-clique-factor}.
The next two lemmas, taken from our companion paper~\cite{AignerHorevHefetzPersonTrushkin2026}, facilitate our proof.  


\medskip
The first of these lemmas allows one to pass to a spanning subgraph whose minimum and maximum degrees are both of order $pn$, at the cost of a constant factor in the bijumbledness parameter.

\begin{lemma}[Degree regularisation -- Lemma 3.1 in~~\cite{AignerHorevHefetzPersonTrushkin2026}]
\label{lem:degree-regularisation}
Given constants \(c, a > 0\), there exist constants  
\(c_1^{\circ} := c_1^{\circ}(c) > 0\),
\(c_2^{\circ} := c_2^{\circ}(c,a) > 0\), and 
\(d_{\star} := d_{\star}(c) > 0\) with the following property. For every positive integer \(n\), every \(0 < p := p(n) \leq 1\), and every \(\beta := \beta(n) \geq 0\), set \(d = p n\) and suppose that \(d \geq d_{\star}\). Then, every \(n\)-vertex
\((p,\beta)\)-bijumbled graph \(G\) satisfying
\[
 \delta(G) \geq c d \quad \textrm{and} \quad 
 \beta \leq a d,
\]
has a spanning subgraph \(G^{\circ}\) that is \((p,5\beta)\)-bijumbled and such that
\[
 c_1^{\circ}d
 \leq
 \delta(G^{\circ})
 \leq
 \Delta(G^{\circ})
 \leq
 c_2^{\circ}d.
\]
\end{lemma}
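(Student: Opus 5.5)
Our plan is to prove Lemma~\ref{lem:degree-regularisation} by deleting edges between high-degree vertices, keeping only a bounded number of edges at each vertex while preserving a linear fraction of every degree. Set $d=pn$. First we bound the number of vertices of large degree. Let $B=\{v:\deg_G(v)\geq Kd\}$ for a large constant $K=K(a)$. Bijumbledness applied to $X=B$, $Y=V(G)$ gives $\ee_G(B,V)\leq p|B|n+\beta\sqrt{|B|n}$, and also $\ee_G(B,V)\geq Kd|B|$. Hence $(K-1)d|B|\leq ad\sqrt{|B|n}$, so $|B|\leq a^2n/(K-1)^2$. The same kind of estimate shows that vertices have few neighbours inside small sets. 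For $X=\{v\}$ and small $Y$ we only get $\deg_Y(v)\leq p|Y|+\beta\sqrt{|Y|}$, which is too weak for individual vertices. We therefore use the averaged version: the set of vertices having at least $\tfrac c2 d$ neighbours in a set $Y$ of size $\varepsilon n$ has size $O(a^2\varepsilon n/c^2)$ once $p\varepsilon n$ is small compared with $cd$. We expect this averaged control to suffice.

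Next we construct $G^\circ$ by a random thinning, chosen for robustness over a careful greedy argument. Each vertex $v$ with $\deg_G(v)>Kd$ marks a uniformly random subset of its incident edges of size $\lceil Kd\rceil$; every other vertex marks all of its edges. We keep an edge $uv$ if and only if both endpoints marked it. Then $\Delta(G^\circ)\leq Kd=:c_2^\circ d$ automatically. For the minimum degree, a vertex $v$ retains each edge $uv$ with probability at least $\min\{1,Kd/\deg_G(u)\}$. Most neighbours of $v$ lie outside $B$, unless $v$ sends many edges into the small set $B$; the averaged estimate above shows that at most $O(a^2n/K^2)$ vertices do so. This is a problem, because we need \emph{every} vertex to keep degree $\geq c_1^\circ d$. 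We would handle the exceptional vertices in a second round: iteratively let them re-mark, giving priority to edges towards low-degree neighbours, and use a Chernoff bound (requiring $d\geq d_\star$, with the tail handled by Lov\'asz Local Lemma dependency counts, since $d$ may only be a large constant) to get a positive-probability outcome. If this becomes messy, the fallback is deterministic. We orient $G$ via a flow/Hall argument so that each vertex has out-degree at least $\tfrac c2 d$ and in-degree at most $Kd$, and let $G^\circ$ be the union of the chosen out-edges. The existence of such an orientation is a bipartite $b$-matching problem. Hall's condition reduces to showing $e_G(X,V)\leq$ (capacity), that is, for every $X$, the sets receiving the out-edges of $X$ have enough in-capacity. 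This is exactly where the bijumbledness bound $\beta\leq ad$ and the estimate $|B|$ small enter.

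Finally, bijumbledness of $G^\circ$ must hold with parameter $5\beta$. Upper discrepancy is inherited: $\ee_{G^\circ}(X,Y)\leq\ee_G(X,Y)\leq p|X||Y|+\beta\sqrt{|X||Y|}$. The lower bound is the main obstacle, since deleting edges can lower $\ee(X,Y)$ by up to $\ee_{G\setminus G^\circ}(X,Y)$. We must show that the deleted graph $D=G\setminus G^\circ$ satisfies $\ee_D(X,Y)\leq4\beta\sqrt{|X||Y|}$. Every deleted edge is incident to $B$ (or to the exceptional set), so $\ee_D(X,Y)\leq\ee_G(X\cap B',Y)+\ee_G(X,Y\cap B')$ with $|B'|=O(a^2n/K^2)$. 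Bijumbledness gives $\ee_G(X\cap B',Y)\leq p|X\cap B'||Y|+\beta\sqrt{|X||Y|}$. The term $p|X\cap B'||Y|\leq p\sqrt{|X||Y|}\sqrt{|B'|n}$ is at most $\beta\sqrt{|X||Y|}$ provided $|B'|\leq\beta^2/(p^2n)$, which holds only if $\beta$ is not much smaller than $a d$ times $K^{-1}$. This is the delicate point. To fix it, we would choose the threshold $K$ in terms of $\beta/d$ rather than $a$ alone, with $|B|\leq\beta^2 n/((K-1)d)^2$ directly from the first estimate. This yields the $p|X\cap B||Y|\leq\beta\sqrt{|X||Y|}$ bound, at the price that $c_2^\circ$ depends on $a$ through $K\geq 2$ (fine since $\beta\leq ad$ gives $K$ bounded). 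The two terms of $D$ then contribute at most $2\beta$ each. Together with the inherited $\beta$, this totals the stated $5\beta$.
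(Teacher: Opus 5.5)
The paper imports this lemma from the companion work without proof, so I am judging your argument on its own merits.

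\textbf{Where the proposal breaks.} Your discrepancy step is correct and is evidently where the constant $5$ comes from. With $B=\{v:\deg_G(v)\ge Kd\}$ one gets $|B|\le\beta^2n/((K-1)d)^2$, hence $p|X\cap B||Y|\le p\sqrt{|B|n}\sqrt{|X||Y|}\le\frac{\beta}{K-1}\sqrt{|X||Y|}$. Provided every deleted edge meets $B$, this gives $\beta+2\beta+2\beta$ for every $K\ge2$; you do not need to tune $K$ to $\beta/d$. The genuine gap is that neither of your constructions delivers this invariant together with $\delta(G^\circ)\ge c_1^\circ d$.

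\emph{Random marking} keeps the invariant but not the degrees. A vertex $w\notin B$ with more than $cd/2$ neighbours in $B$ keeps, in expectation, only $\sum_{u\in N_G(w)\cap B}\lceil Kd\rceil/\deg_G(u)$ of those edges. Nothing bounds $\deg_G(u)$ from above, so no Chernoff or LLL argument can rescue this. Letting $w$ ``re-mark'' does nothing, because $w$ already marks all its edges; the deletions are decided by its high-degree neighbours. Moreover, several exceptional vertices competing for the budget $Kd$ of a common $u\in B$ is a Hall-type constraint that must be verified, not a concentration issue.

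\emph{The orientation fallback} secures the degrees but breaks the invariant. As you describe it, with $G^\circ$ the union of $\lceil cd/2\rceil$ out-edges per vertex, it discards edges between vertices of degree below $Kd$. Indeed $\vec e_{G^\circ}(V(G),V(G))\le n(cd+2)$, whereas $(p,5\beta)$-bijumbledness requires at least $pn^2-5\beta n$. These are incompatible when, for example, $c\le 1/2$, $\beta\le d/20$ and $d>8$. So the sentence ``every deleted edge is incident to $B$'' in your last paragraph is unsupported for the construction that actually achieves the degree bounds.

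\textbf{The missing idea} is to run your flow argument only on the edges meeting $B$.
\begin{itemize}
\item Keep every edge of $G-B$.
\item Let $W$ be the set of $v\notin B$ with $|N_G(v)\setminus B|<cd/2$, and put $m=\lceil cd/8\rceil$.
\item Each $u\in B$ picks $m$ incident edges and each $w\in W$ picks $m$ edges into $B$, subject to each $u\in B$ being picked by at most $Kd$ other vertices.
\item Let $G^\circ$ be $G-B$ together with all picked edges.
\end{itemize}
Then every deleted edge meets $B$, and $\delta(G^\circ)\ge m$. Vertices outside $B$ have degree below $Kd$, and vertices of $B$ have degree at most $m+Kd$.

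For feasibility: once $cd\ge 8$, every picker has at least $2m$ admissible edges. Max-flow/min-cut then reduces feasibility to $\vec e_G(S,T)\le m|S|+Kd|T|$ for all $S\subseteq B\cup W$ and $T\subseteq B$. This follows from
$\vec e_G(S,T)\le p|S||T|+\beta\sqrt{|S||T|}\le d|T|+\frac{cd}{8}|S|+\frac{2a^2}{c}d|T|$
by AM--GM, once $K=\max\{2,1+2a^2/c\}$.

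This yields $c_1^\circ=c/8$, $c_2^\circ=K+c$ and $d_\star=8/c$, with exactly the dependencies in the statement. Note that $K$ must depend on $a$ because of this Hall condition, not because of the discrepancy estimate.
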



The second lemma controls the bijumbledness parameter post
percolation. 

\begin{lemma}[Bijumbledness inheritance post percolation -- Lemma 3.2 in~~\cite{AignerHorevHefetzPersonTrushkin2026}]
\label{lem:jumbled-sparsification}
Let $K_0 > 0$ be a constant and let $F$ be an $n$-vertex
$(p,\beta)$-bijumbled graph. Then, there exists a constant
$C_{K_0} > 0$, depending only on $K_0$, such that for every
$0<\rho<1$,
\begin{align*}
 &\Pr\!\left[
  F_\rho\text{ is }\left(
   \rho p,
   \rho\beta+C_{K_0}\bigl(
     \sqrt{\rho\Delta(F)}+\sqrt{\log n}
   \bigr)
  \right)\text{-bijumbled}
 \right]
 \geq1-(2n)^{-K_0}.
\end{align*}
\end{lemma}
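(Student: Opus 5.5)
The plan is to prove Lemma~\ref{lem:jumbled-sparsification} by a union bound over pairs $(X,Y)$ of vertex sets. The pairs are organised by their sizes, and for each pair we apply a Chernoff/Bernstein tail bound to the retained ordered edge count. Write $\ee_{F_\rho}(X,Y)=\sum_{e}\xi_e w_e(X,Y)$, where the sum runs over edges $e$ of $F$ meeting both $X$ and $Y$ appropriately, the $\xi_e\sim\mathrm{Bernoulli}(\rho)$ are independent, and $w_e\in\{1,2\}$ accounts for edges inside $X\cap Y$ being counted twice.

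Its mean is $\rho\,\ee_F(X,Y)$. The deterministic part then follows from the triangle inequality and the bijumbledness of $F$:
$$
\bigl|\rho\,\ee_F(X,Y)-\rho p|X||Y|\bigr|\le\rho\beta\sqrt{|X||Y|}.
$$
It therefore suffices to show that, with probability at least $1-(2n)^{-K_0}$, every pair with $x:=|X|\le y:=|Y|$ satisfies
$$
\bigl|\ee_{F_\rho}(X,Y)-\rho\,\ee_F(X,Y)\bigr|\le C_{K_0}\bigl(\sqrt{\rho\Delta(F)}+\sqrt{\log n}\bigr)\sqrt{xy}.
$$

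The fluctuation is controlled by the variance. This is at most $4\rho\,\ee_F(X,Y)\le 4\rho\Delta(F)\min\{x,y\}=4\rho\Delta x$, since every vertex of $X$ has at most $\Delta$ neighbours. Bernstein's inequality with increments bounded by $2$ gives a deviation $t$ with failure probability at most
$$
2\exp\!\Bigl(-\tfrac{t^2}{8\rho\Delta x+4t/3}\Bigr).
$$
The number of pairs with given sizes is at most $\binom nx\binom ny\le\exp\bigl(2y\log(en/y)\bigr)$. Taking $t=C(\sqrt{\rho\Delta}+\sqrt{\log n})\sqrt{xy}$ would beat this union bound if $y\log n\lesssim t^2/(\rho\Delta x)+t$. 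This works when $x\approx y$, but it fails for very unbalanced pairs: for $x=1$ one has $t\approx\sqrt{y}$, while the entropy is of order $y\log n$. So the unbalanced regime needs separate treatment, and this is the main obstacle.

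To handle it, I would first split according to whether $y\le n/2$ or not. When $y>n/2$, the complement trick, or simply crude counting with $2^n$ choices, should suffice using the large mean. For unbalanced pairs, I would not union bound over $Y$. Instead, I would use the deterministic bound
$$
\ee_{F_\rho}(X,Y)\le\sum_{v\in X}\deg_{F_\rho}(v)\le x\cdot\max_v\deg_{F_\rho}(v).
$$
First I would show by Chernoff that every degree satisfies $\deg_{F_\rho}(v)\le\rho\Delta+C(\sqrt{\rho\Delta\log n}+\log n)$; this requires only a union bound over $n$ events.

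The upper deviation is then $O\bigl((\sqrt{\rho\Delta\log n}+\log n)x\bigr)$, which is at most $O\bigl((\sqrt{\rho\Delta}+\sqrt{\log n})\sqrt{xy}\bigr)$ as soon as $y\ge x\log n$. The lower deviation is trivial when $\rho\,\ee_F(X,Y)\le t$. Otherwise I would use that $\ee_F(X,Y)\le p\,xy+\beta\sqrt{xy}$ and bound the loss through a Chernoff lower tail, union bounding only over $X$ and a dyadic scale. I expect this lower-tail unbalanced case to be the delicate point. If necessary I would instead apply the usual trick of fixing $X$ and summing per-vertex deviations $\sum_{u\in Y}\bigl(\deg_{F_\rho}(u,X)-\rho\deg_F(u,X)\bigr)$, bounding the sum of the $y$ largest of these terms.

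For balanced pairs, $x\le y\le x\log n$, the entropy is $O(x\log^2 n)$, while
$$
\frac{t^2}{\rho\Delta x}\ge C^2 y\ge C^2x
\qquad\text{and}\qquad
t\ge C\sqrt{\log n}\,x.
$$
This is not quite enough, so I would refine the split threshold to $y\ge x\cdot\omega$ versus $y\le x\cdot\omega$ with a constant $\omega$. I would then use the entropy bound $2y\log(en/y)$ together with the $\sqrt{\log n}\sqrt{xy}$ term, choosing constants depending on $K_0$ so that the total failure probability is at most $(2n)^{-K_0}$.
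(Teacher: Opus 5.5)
The paper imports this lemma from the companion work without proof, so there is no in-text argument to compare with. Judged on its own, your plan has a genuine gap, located exactly where you write ``this is not quite enough''.

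Put $t=C(\sqrt{\rho\Delta}+\sqrt{\log n})\sqrt{xy}$ with $x\le y$. With your variance proxy $4\rho\Delta x$, the Bernstein exponent is bounded as follows:
\begin{itemize}
\item if $\rho\Delta\ge\log n$, it is at most $t^2/(8\rho\Delta x)\le C^2y/2$;
\item if $\rho\Delta<\log n$, it is at most $3t/4\le 2C\sqrt{\log n}\,y$.
\end{itemize}
However, there are at least $(n/y)^y$ pairs of the given sizes. So the union bound fails for every $y\le n^{1-\varepsilon}$ once $n$ is large, already at $x=y$. A missing factor $\log(n/y)$ cannot be recovered by enlarging $C$ or by moving a constant split threshold $\omega$. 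The regime $\rho\Delta\ge\log n$ is precisely the one the clique-factor application uses, where the retained degrees are polynomial in $n$.

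Replacing the proxy by the bound $\rho\vec e_F(X,Y)\le\rho(pxy+\beta\sqrt{xy})$ does not rescue the argument. The lemma permits $\beta$ of order $\Delta$: a disjoint union of copies of $K_{\Delta+1}$ is $((\Delta+1)/n,\Delta)$-bijumbled. With $\beta=\Delta$ this bound is no better than $\rho\Delta x$. A union bound over all pairs with a worst-case mean cannot exploit the fact that pairs with $\vec e_F(X,Y)$ near $\Delta\min\{x,y\}$ are rare.

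Your unbalanced step is also miscomputed. The inequality $\vec e_{F_\rho}(X,Y)\le x\max_v\deg_{F_\rho}(v)$ bounds $\vec e_{F_\rho}(X,Y)-\rho\vec e_F(X,Y)$ only by $O\bigl((\rho\Delta+\log n)x\bigr)$, because $\rho\vec e_F(X,Y)$ can be far below $\rho\Delta x$. Hence this step covers only $y\ge x(\sqrt{\rho\Delta}+\sqrt{\log n})^2/C^2$. In that range the lower tail is actually trivial, since $\rho\vec e_F(X,Y)\le\rho\Delta x\le t$. The window left to the failing union bound is then polynomially wide when $\rho\Delta$ is large.

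The missing idea is to control all pairs simultaneously through an operator norm. Let $W=A_{F_\rho}-\rho A_F$. Its entries above the diagonal are independent, centred, bounded by $1$ in absolute value and supported on $E(F)$. Each row has variance sum at most $\rho(1-\rho)\Delta(F)\le\rho\Delta(F)$. Since
\[
\vec e_{F_\rho}(X,Y)-\rho p|X||Y|=\rho\bigl(\vec e_F(X,Y)-p|X||Y|\bigr)+\mathbf 1_X^{\top}W\mathbf 1_Y ,
\]
one gets $\bigl|\vec e_{F_\rho}(X,Y)-\rho p|X||Y|\bigr|\le(\rho\beta+\|W\|)\sqrt{|X||Y|}$ for all $X,Y$ at once. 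The Bandeira--van Handel tail bound, for symmetric matrices with independent bounded centred entries, gives $\PP\bigl[\|W\|\ge c_1\sqrt{\rho\Delta(F)}+s\bigr]\le ne^{-s^2/c_2}$ with absolute constants $c_1,c_2$. Choosing $s=\sqrt{c_2(K_0+1)\log(2n)}$ makes the failure probability at most $(2n)^{-K_0}$ and allows $C_{K_0}=\max\{c_1,\sqrt{2c_2(K_0+1)}\}$. This produces exactly the shape $\sqrt{\rho\Delta(F)}+\sqrt{\log n}$ of the statement. By contrast, matrix Bernstein would only give $\sqrt{\rho\Delta(F)\log n}+\log n$, which is too weak. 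A Kahn--Szemer\'edi light/heavy-couples argument would have to be adapted to the inhomogeneous variance profile supported on $E(F)$.
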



We are now in a position to prove Theorem~\ref{thm:spread-clique-factor}. 

\begin{proof}[Proof of Theorem~\ref{thm:spread-clique-factor}]
The proof consists of three stages. We first choose an auxiliary retention rate for which an uncoloured percolation a.a.s. admits a $K_k$-factor.
We then obtain the spread parameter from the first assertion of
Theorem~\ref{thm:spread-transference}, and finally we verify the retention
condition in its rainbow assertion.

\medskip
\noindent\textbf{Step 1. Auxiliary retention and uncoloured containment.}\nopagebreak[4]
Fix $k\geq3$ and $\alpha\in(0,1]$. Let $c_1^{\circ} := c_1^{\circ}(\alpha)$ and $c_2^{\circ} := c_2^{\circ}(\alpha, \alpha)$ be the constants whose existence is ensured by Lemma~\ref{lem:degree-regularisation}. Let $\eta := \eta(k, c_1^{\circ}/2) > 0$ be the constant guaranteed by Theorem~\ref{thm:morris}. Let $C_{\rm tr}$ and $C_2$
be the constants defined in Theorem~\ref{thm:spread-transference} and
Lemma~\ref{lem:jumbled-sparsification} with $K_0 = 2$, respectively.
Set 
$$
 b = \min\left\{\frac14,\frac{1}{16C_{\rm tr}}\right\} \qquad \textrm{and} \qquad \sigma = 2b.
$$
Set 
$$
 \gamma =  \frac{\eta b^{k-2}}{10}
$$
and choose $C$ sufficiently large so as to ensure
$$
 (bC)^{k-3/2}\geq\frac{4C_2 \sqrt{c_2^{\circ}}}{\eta}.
$$
Note that 
$$
 \beta \leq \gamma \left(\frac{\rho}{\log n}\right)^{k-2} p^{k-1} n \leq \alpha pn, 
$$
where the last inequality holds for sufficiently large $n$. The density assumption $\rho p \geq Cn^{-1/(2k-3)} \log n$ implies that the conditions of Lemma~\ref{lem:degree-regularisation} are met. Hence, there exists a $(p,5\beta)$-bijumbled spanning subgraph $J \subseteq G$ satisfying 
$$
 c_1^{\circ} p n
 \leq
 \delta(J)
 \leq
 \Delta(J)
 \leq
 c_2^{\circ} p n.
$$
Set
$$
 \vartheta = \frac{b\rho}{\log n}
 \qquad \textrm{and} \qquad t = \vartheta p,
$$
and note that 
\begin{equation}
\label{eq:clique-sampling-scale}
 t\geq bC n^{-1/(2k-3)}
 \qquad \textrm{and thus} \qquad tn \geq b C n^{(2k-4)/(2k-3)} \gg \log n.
\end{equation}


Applying Lemma~\ref{lem:jumbled-sparsification} to $J$ ensures that, with probability at least $1 - (2n)^{-2} = 1 - o(1)$,
the percolation $J_\vartheta$ is $(t,\beta')$-bijumbled, where  $\beta' = 5\vartheta\beta + C_2 \bigl(\sqrt{\vartheta \Delta(J)}+\sqrt{\log n}
   \bigr) \leq \eta t^{k-1}n$. To see the validity of the bound on $\beta'$, note first that
\begin{align*}
 5\vartheta\beta
 \leq5\vartheta\gamma
       \left(\frac{\rho}{\log n}\right)^{k-2}p^{k-1}n
 =5\gamma b^{-(k-2)}t^{k-1}n
 \leq\frac\eta2t^{k-1}n.
\end{align*}
Next, since $tn \gg \log n$, it follows that $C_2 \bigl(\sqrt{\vartheta \Delta(J)} + \sqrt{\log n}\bigr) \leq 2 C_2 \sqrt{c_2^{\circ} t n}$. It thus follows by~\eqref{eq:clique-sampling-scale} and by our choice of $C$ that
$$
 \frac{2C_2 \sqrt{c_2^{\circ} t n}}{t^{k-1}n}
 =\frac{2C_2 \sqrt{c_2^{\circ}}}{\sqrt{t^{2k-3}n}}
 \leq\frac{2C_2 \sqrt{c_2^{\circ}}}{(bC)^{k-3/2}}
 \leq\frac\eta2.
$$

Additionally, since $t n \gg \log n$, a standard application of Chernoff's bound and a union bound shows that $\delta(J_\vartheta) \geq c_1^{\circ} t n/2$ holds with probability at least
$$
1 - n \exp \left(- \frac{c_1^{\circ} tn}{8} \right) = 1 - o(1).
$$

Since, moreover, $k \mid n$ and $t>0$, Theorem~\ref{thm:morris} may be applied to show that a.a.s. $J_\vartheta$ admits a $K_k$-factor, and thus
$$
 a_n:=\PP[\text{$G_\vartheta$ admits a $K_k$-factor}]
 \geq\PP[\text{$J_\vartheta$ admits a $K_k$-factor}]
 =1-o(1).
$$

Let $\cF_k(G)$ be the family of edge-sets of $K_k$-factors of
the original graph $G$; every member of this family has
$$
 h := \frac nk\binom{k}{2}=\frac{k-1}{2}n
$$
edges. 


\medskip
\noindent\textbf{Step 2. Construction of the spread measure.}\nopagebreak[4]
Since $a_n = 1 - o(1)$, there exists some $n_0$ such that $a_n \geq 1/2$ holds for any $n \geq n_0$; for the remainder of the proof assume that $n \geq n_0$. The first assertion of Theorem~\ref{thm:spread-transference}, applied to the ground set
$E(G)$, the family $\cF_k(G)$, and the retention
probability $\vartheta$, ensures the existence of a $q_n$-spread measure on the family of $K_k$-factors of $G$ such that 
$$
 0<q_n:=\frac{\vartheta}{a_n}
 \leq2\vartheta
 =\frac{2b\rho}{\log n} = \frac{\sigma\rho}{\log n} 
 \leq\frac12,
$$
where the last inequality holds for sufficiently large $n$. The first assertion of Theorem~\ref{thm:spread-clique-factor} is thus established. 

\medskip
\noindent\textbf{Step 3. Rainbow containment after colouring and retention.}\nopagebreak[4]
Since $k$ is fixed, $\log h\leq2\log n$ holds for all sufficiently
large $n$. Our choice of $b$ then ensures
$$
 2C_{\rm tr}q_n\log h
 \leq2C_{\rm tr}\frac{2b\rho}{\log n}\,2\log n
 =8C_{\rm tr}b\rho \leq \rho.
$$
All hypotheses of Theorem~\ref{thm:spread-transference} are thus satisfied, establishing the rainbow assertion of Theorem~\ref{thm:spread-clique-factor}.
\end{proof}


\subsection{Prescribed spanning trees: proof of Theorem~\ref{thm:spread-trees}}
\label{sec:proof-trees}

The proof consists of three stages. We first choose an auxiliary retention rate for which an uncoloured percolation a.a.s. contains a copy of any prescribed tree $T\in\cT(n,\Delta)$. We then obtain the spread parameter from the first assertion of Theorem~\ref{thm:spread-transference}, and finally verify the retention condition in its rainbow assertion.

\medskip
\noindent\textbf{Step 1. Auxiliary retention and uncoloured containment.}\nopagebreak[4] Fix $\alpha \in (0,1]$, and let $\gamma_0 \coloneqq \gamma_0(\alpha)$ and $\kappa \coloneqq \kappa(\alpha)$
be the constants whose existence is guaranteed by
Lemma~\ref{lem:percolated-expansion}.
Choose constants $0 < \gamma \leq \gamma_0/2$ and
$$
 C\geq
 \max\left\{
  1,\frac6\kappa,\frac{12C_{\rm tr}}{\kappa}
 \right\},
$$
where $C_{\rm tr}$ is the constant appearing in
Theorem~\ref{thm:spread-transference}.

Fix an integer $\Delta\geq2$, 
let $G$ and $\rho\in(0,1]$ satisfy the hypotheses 
of Theorem~\ref{thm:spread-trees}, and set
$$
 d = p n, \qquad
 L := L_{n,\Delta } = \Delta^{5\sqrt{\log n}},
 \qquad \textrm{and} \qquad
 R = \lceil L\rceil.
$$
Using this notation and assuming $n$ to be sufficiently large,
$$
 \delta(G)\geq\alpha d,\qquad
 \beta\leq\gamma\frac dL \leq\gamma_0\frac{d}{R},\qquad \textrm{and} \qquad
 \rho d\geq C L(\log n)^2
$$
hold by the premise of Theorem~\ref{thm:spread-trees}. Observation~\ref{obs:density-floor}, applied with $b = \gamma$
and $L = L_{n,\Delta}$, ensures that
\begin{equation}
\label{eq:tree-density-floor}
 \frac{d}{L(\log n)^2}\longrightarrow\infty.
\end{equation}
This estimate will establish the $o(1/\log n)$ bound on the spread parameter in~\eqref{eq:q_T}.

Set
$$
 \vartheta = \frac{(R+3)\log n}{\kappa d}.
$$
Since $R+3 \leq 3L$ holds for sufficiently large $n$, it follows from~\eqref{eq:tree-density-floor} that
$$
 0<\vartheta
 \leq\frac3\kappa\frac{L\log n}{d}
 =o\left(\frac1{\log n}\right) < 1,
$$
where the last inequality holds for sufficiently large $n$. Finally, $R\geq2$, $\log R = o(\log n)$, and
$$
 -\kappa d\log(1-\vartheta)
 \geq\kappa d\vartheta
 = (R+3)\log n,
$$
where in the above inequality we used the fact that $-\log(1-u) \geq u$ holds whenever $0<u<1$. All hypotheses of Lemma~\ref{lem:percolated-expansion} are thus met.


Let $\mathcal E_n$ denote the event that $G_\vartheta$ is an
$R$-expander; then, $\PP[\mathcal E_n]=1-o(1)$ holds, by Lemma~\ref{lem:percolated-expansion}. It thus follows by Theorem~\ref{thm:han-yang} that $G_\vartheta$ is a.a.s. $\cT(n,\Delta)$-universal.


\medskip
\noindent\textbf{Step 2. Construction of the spread measure.} For each prescribed $T\in\cT(n,\Delta)$, let
$\cA_T\subseteq\binom{E(G)}{n-1}$ be the family of edge-sets of copies of $T$ in $G$. Set
$$
 a_T = \PP[\text{$G_\vartheta$ contains a copy of $T$}]
$$
and note that
$$
 \mathcal E_n
 \subseteq
 \{\text{$G_\vartheta$ contains a copy of $T$}\}
$$
holds for every such $T$. Consequently,
$$
 \inf_{T\in\cT(n,\Delta)}a_T
 \geq\PP[\mathcal E_n]
 =1-o(1).
$$
In particular, $a_T \geq 1/2$ holds for every prescribed tree $T$ and every  
sufficiently large $n$. We may therefore set the spread parameter to 
$q_T:=\vartheta/a_T$; then, 
$$
 0<q_T
 \leq2\vartheta
 \leq\frac6\kappa\frac{L\log n}{d}
 \leq C\frac{L\log n}{pn}
 =o\left(\frac1{\log n}\right)
$$
holds uniformly over $T$, where the last equality holds by~\eqref{eq:tree-density-floor}.
Hence, $q_T \leq 1/2$ for sufficiently large $n$, uniformly over all prescribed trees.

\medskip

To apply Theorem~\ref{thm:spread-transference}, fix an
arbitrary sequence of prescribed trees
$T=T_n\in\cT(n,\Delta)$ and set
$$
 \Omega_n = E(G),\qquad
 \cA_n = \cA_{T_n},\qquad
 h_n = n-1,\qquad \textrm{and} \qquad
 \vartheta_n = \vartheta.
$$
The event that $(\Omega_n)_{\vartheta_n}$ contains a
member of $\cA_n$ is precisely the event that
$G_\vartheta$ contains a copy of $T_n$; by definition its probability
is $a_{T_n}$. For all sufficiently large $n$, these
probabilities are positive and $\vartheta_n/a_{T_n} \leq 1/2$.
Since $h_n\to\infty$, the first assertion of
Theorem~\ref{thm:spread-transference} provides a
$q_{T_n}$-spread measure on $\cA_{T_n}$.
The sequence of prescribed trees was arbitrary, and
the preceding bounds are independent of its choice;
this then establishes the first assertion of Theorem~\ref{thm:spread-trees}.

\medskip
\noindent\textbf{Step 3. Colouring and the final retention rate.} It remains to verify the rainbow assertion of Theorem~\ref{thm:spread-trees}. It follows by our choice of $C$ and by the assumed inequality $\rho d\geq C L(\log n)^2$ that
$$
 \begin{aligned}
 2C_{\rm tr}q_T\log(n-1)
 &\leq
 \frac{12C_{\rm tr}}{\kappa}
 \frac{L\log n\,\log(n-1)}{d}
 \leq
 \frac{12C_{\rm tr}}{\kappa}
 \frac{L(\log n)^2}{d}
 \leq
 C\frac{L(\log n)^2}{d}
 \leq \rho.
 \end{aligned}
$$
We can thus apply the second assertion of
Theorem~\ref{thm:spread-transference} with
$\rho_n=\rho$.
Colour $G$ $[n-1]$-uniformly and subsequently percolate at rate $\rho$, independently of the colouring.
The resulting coloured graph $G_\rho$ a.a.s. contains a
rainbow member of $\cA_T$.

\medskip

Finally, we verify uniform convergence of the rainbow
failure probabilities.
For each $n$, choose
$T_n\in\cT(n,\Delta)$ maximising
$$
 \PP[\text{$G_\rho$ contains no rainbow copy of $T_n$}].
$$
A maximum exists because there are finitely many
$n$-vertex trees up to isomorphism.
All estimates above apply to this sequence.
Theorem~\ref{thm:spread-transference} therefore implies
that its failure probability tends to zero, and hence
$$
 \sup_{T\in\cT(n,\Delta)}
 \PP[\text{$G_\rho$ contains no rainbow copy of $T$}]
 =o(1).
$$
\hfill $\square$

\begin{remark}
\label{rem:not-simultaneous-trees}
On the expansion event, the auxiliary uncoloured graph
$G_\vartheta$ contains every member of $\cT(n,\Delta)$.
We nevertheless construct the spread measure and apply
Theorem~\ref{thm:spread-transference} separately for each
prescribed tree.
Accordingly, Theorems~\ref{thm:exact-trees}
and~\ref{thm:spread-trees} assert uniform convergence
of the failure probabilities for prescribed trees;
they do not assert that a single coloured percolation
simultaneously contains rainbow copies of every tree
in $\cT(n,\Delta)$.
\end{remark}

\subsection{Hamilton cycles: proof of Theorem~\ref{thm:spread-hamilton}}
\label{sec:proof-hamilton}

The proof consists of three stages. We first choose an auxiliary retention rate for which an uncoloured percolation a.a.s. admits a Hamilton cycle.
We then obtain the spread parameter from the first assertion of
Theorem~\ref{thm:spread-transference}, and finally verify the retention
condition in its rainbow assertion.


\medskip
\noindent\textbf{Step 1. Auxiliary retention and uncoloured containment.} Fix an integer $R \geq 2$ to be sufficiently large for the assertion of 
Theorem~\ref{thm:expander-hamilton} to hold.
Let $\eta := \eta(\alpha,R)$ and $\kappa := \kappa(\alpha)$ be provided by
Corollary~\ref{lem:hamilton-percolation}, and choose $0 < \gamma \leq \eta$.
Choose a sufficiently large constant $C := C(\alpha)$ so as to satisfy 
$$
 C\geq\max\left\{
  1,\frac{8(R+3)}{\kappa},
  \frac{4C_{\rm tr}(R+3)}{\kappa}
 \right\},
$$
where $C_{\rm tr}$ is the constant appearing in the statement of 
Theorem~\ref{thm:spread-transference}.

Let $G$ and $\rho$ be as in the premise of Theorem~\ref{thm:spread-hamilton} and set $d = p n$. Note that
\begin{equation} \label{eq::densityHam}
    d \geq \rho d \geq C(\log n)^2 \geq C \log n
\end{equation}
holds for all sufficiently large $n$. Set the retention probability to be 
$$
 \vartheta = \frac{(R+3)\log n}{\kappa d}.
$$
The above inequalities and our choice of $C$ imply
$$
 0 < \vartheta \leq \frac{R+3}{\kappa C} \leq \frac18.
$$
Furthermore,
$$
 -\kappa d\log(1-\vartheta)
 \geq\kappa d\vartheta
 =(R+3)\log n,
$$
where we used $-\log(1-u)\geq u$ for $0<u<1$.
All hypotheses of Corollary~\ref{lem:hamilton-percolation} are thus satisfied, implying that $G_\vartheta$ is a.a.s. an $R$-expander.
It then follows by Theorem~\ref{thm:expander-hamilton} and the choice of $R$ that
$$
 a_n := \PP[\text{$G_\vartheta$ admits a Hamilton cycle}]
 = 1 - o(1).
$$

\medskip
\noindent\textbf{Step 2. Construction of the spread measure.} Let $\Omega_n = E(G)$ and let $\cA_n \subseteq \binom{E(G)}{n}$ be the family of edge-sets of Hamilton cycles in $G$. Then, $h_n = n \to \infty$, and the containment probability for the $\vartheta$-percolation of $\Omega_n$ is $a_n$.
Since $a_n \geq 1/2$ for all sufficiently large $n$, it follows that
$$
 0<q_n:=\frac{\vartheta}{a_n}
 \leq2\vartheta
 =\frac{2(R+3)}{\kappa}\frac{\log n}{pn}
 \leq C\frac{\log n}{pn} \leq 1/2,
$$
where the last inequality holds for sufficiently large $n$. Theorem~\ref{thm:spread-transference} therefore provides a
$C\log n/(pn)$-spread measure on $\cA_n$.
The first assertion of Theorem~\ref{thm:spread-hamilton} is thus established.


\medskip
\noindent\textbf{Step 3. Colouring and the final retention rate.} It follows by~\eqref{eq::densityHam} and by our choice of $C$ that
$$
 2C_{\rm tr}q_n\log n
 \leq\frac{4C_{\rm tr}(R+3)}{\kappa}
              \frac{(\log n)^2}{pn}
 \leq\frac{C(\log n)^2}{pn}
 \leq\rho.
$$
All hypotheses of Theorem~\ref{thm:spread-transference} are thus satisfied, establishing the rainbow assertion of Theorem~\ref{thm:spread-hamilton}.
\hfill $\square$


\section*{AI disclosure} ChatGPT Plus was used for \LaTeX\  support; in particular, it was given the content for all the tables appearing in the manuscript and it was asked to generate the \LaTeX\  code for those. It was also asked to find potentially awkward phrases and formulations in English and to propose alternatives which, on occasion, were adopted.

\bibliographystyle{amsplain}
\bibliography{ExactPaletteLit}

\appendix

\section{Proofs omitted from Section~\ref{sec:preliminaries}}
\label{app:preliminaries}

\subsection{Proof of Lemma~\ref{lem:balanced-bipartition}} \label{sec:lem:balanced-bipartition}

Draw uniformly at random a bipartition
$V(G)=U\mathbin{\dot\cup}W$ with $|U|=|W|=m$.
Fix $v\in V(G)$ and condition on the side of the bipartition containing
$v$. The opposite side is a uniformly chosen $m$-subset of the other
$n-1$ vertices. Hence, the cross-degree $D_v$ has a hypergeometric
distribution with mean
$$
 \mu_v := \frac{m}{n-1} \deg_G(v) \geq \frac12 \deg_G(v).
$$
Set $t = 1-2r \in (0,1)$; then, $r \deg_G(v) \leq 2 r \mu_v = (1-t) \mu_v$. It thus follows by Chernoff's bound for the hypergeometric distribution that 
$$
 \PP[D_v < r\deg_G(v)]
 \leq\PP[D_v < (1-t) \mu_v]
 \leq\exp\left(-\frac{t^2\mu_v}{2}\right)
 \leq\exp\left(-\frac{t^2 \alpha pn}{4}\right),
$$
where the last inequality holds by the assumed lower bound on $\delta(G)$.
This bound holds regardless of which side contains $v$, and thus also without that conditioning. A union bound over all $n$ vertices bounds the probability of any failed degree inequality by
$$
 n\exp\left(-\frac{t^2 \alpha pn}{4}\right)=o(1),
$$
where the equality holds since $pn=\omega(\log n)$ whereas $t, \alpha > 0$ are fixed. Therefore, an admissible bipartition exists for all sufficiently large $n$. Finally, for every $X\subseteq U$ and $Y\subseteq W$, the identity
$e_H(X,Y)=\ee_G(X,Y)$ transfers the corresponding bijumbledness condition  from $G$ to $H$.
\hfill$\square$

\subsection{Proof of Lemma~\ref{lem:percolated-Hall}}\label{sec:lem:percolated-Hall}

Set
$$
 g = a-\frac2A>0,
 \qquad
 \tau = \min\left\{\frac14,\frac g4\right\},
 \qquad
 \eta = \min\left\{\frac g4,\frac12\sqrt{\frac\tau2}\right\},
 \qquad \textrm{and} \qquad
 b = a-\tau-\eta.
$$
Since $\tau,\eta\leq g/4$, it follows that
\begin{equation}
 b\geq a-\frac g2=\frac2A+\frac g2
 \qquad\text{which in turn implies}\qquad bA>2.
 \label{eq:sublinear-matching-small-cut-constant}
\end{equation}

If Hall's condition fails in $H_\vartheta$, then by potentially interchanging the roles of $U$ and $W$, a standard argument shows that there are sets $S \subseteq U$ and $B \subseteq W$ such that
\begin{equation}
 |S|=t,
 \qquad
 |B|=t-1,
 \qquad \textrm{and} \qquad
 1 \leq t \leq \frac{m+1}{2},
 \label{eq:det-hall-witness}
\end{equation}
and such that $E_{H_\vartheta}(S, Z) = \varnothing$, where $Z := W \setminus B$.


Set $D = p m$. Suppose first that $t\leq\tau m$.
It then follows by the minimum degree and bijumbledness assumptions that
\begin{align*}
 e_H(S,Z)
 &=\sum_{v\in S}\deg_H(v)-e_H(S,B)
 \geq aDt-pt(t-1)-\beta\sqrt{t(t-1)}\\*
 &\geq(a-\tau-\eta)Dt = bDt,
\end{align*}
where the last inequality follows by $p(t-1)\leq p\tau m=\tau D$,
$\sqrt{t(t-1)}\leq t$, and $\beta\leq\eta D$.

Suppose then that $\tau m<t\leq(m+1)/2$, and set
$x = t/m$ and $z = |Z|/m = (m-t+1)/m$.
Then, $x>\tau$ and $z\geq1/2$, and thus $xz \geq \tau/2$.
The choice of $\eta$ then ensures that
$$
 \frac\beta D\leq\eta
 \leq\frac12\sqrt{\frac\tau2}
 \leq\frac12\sqrt{xz}.
$$
Using the bijumbledness of $H$, we obtain
\begin{align*}
 e_H(S,Z)
 &\geq p|S||Z|-\beta\sqrt{|S||Z|}
 =Dm\sqrt{xz}\left(\sqrt{xz}-\frac\beta D\right)
 \geq\frac12 Dm xz
 \geq \frac\tau4Dm.
\end{align*}
In both ranges $e_H(S, Z) > 0$ holds for each such pair and thus $H$ itself does admit a perfect matching. Consequently, we may henceforth assume that $0<\vartheta<1$ and set $L_\vartheta = - D \log(1-\vartheta)$.

For each fixed pair $(S,B)$, the independence of edge retention implies
$$
 \PP[e_{H_\vartheta}(S,Z)=0]
 =(1-\vartheta)^{e_H(S,Z)}.
$$
A union bound over all choices of $S$ and $B$ yields 
\begin{align}
 \PP[H_\vartheta\text{ violates Hall's condition}]
 &\leq
 2\sum_{1\leq t\leq\tau m}
 \binom mt\binom m{t-1}(1-\vartheta)^{bDt}
 +2\cdot4^m(1-\vartheta)^{\tau Dm/4},
 \label{eq:sublinear-matching-hall-union-bound}
\end{align}
where the factor 2 in both terms comes from taking $S \subseteq U$ or $S \subseteq W$.

For the first term in~\eqref{eq:sublinear-matching-hall-union-bound} note that
$$
 2\binom mt\binom m{t-1}(1-\vartheta)^{bDt}
 \leq \frac{2}{m} \exp\left \{t(2\log m-bL_\vartheta)\right\}
 \leq \frac{2}{m} \exp\left \{t(2\log m - b A \log m)\right\},
$$
where the last inequality holds since $L_\vartheta \geq A \log m$. Set $\xi = bA - 2$, which is positive by~\eqref{eq:sublinear-matching-small-cut-constant}. It then follows that the first term in~\eqref{eq:sublinear-matching-hall-union-bound} is at most
$$
 \frac2m \sum_{t\geq1} m^{-\xi t}
 =\frac2m \cdot \frac{m^{-\xi}}{1-m^{-\xi}}
 =o(1).
$$
The second term in~\eqref{eq:sublinear-matching-hall-union-bound} is at most
$$
 2\exp\left(m\log4-\frac{\tau m}{4}L_\vartheta\right)
 \leq2\exp\left(m\log4-\frac{\tau A}{4}m\log m\right)
 =o(1).
$$
Note that both bounds depend only on $m$ and the constants $a$ and $A$, thus  establishing the asserted uniformity. Finally, $- \log(1-\vartheta) \geq \vartheta$ holds for $0<\vartheta<1$, thus verifying the sufficient condition stated at the end of the lemma.
\hfill$\square$


\subsection{Proof of Lemma~\ref{lem:percolated-expansion}}\label{app:tree-expansion-percolation}


We bound separately the probabilities that the two conditions
in Definition~\ref{def:expander} fail.  First, we verify the
convenient sufficient form of the discrepancy hypothesis.
Set
$$
 d = pn \qquad \textrm{and} \qquad
 \Lambda = -\log(1-\vartheta).
$$
Using the definitions of $\gamma_0$ and $\gamma_R$, and the assumed bounds $R \geq 2$ and $\beta \leq \gamma_R d$, a straightforward calculation shows that

\begin{equation}
\label{eq:expansion-discrepancy-bounds}
 \beta\leq d\min\left\{
  \frac{\alpha}{4(1+\sqrt R)},\,
  \frac18\sqrt{\frac{\alpha}{R+1}},\,
  \frac1{4R}
 \right\}.
\end{equation}

Starting with (E1), consider disjoint sets $X,Y\subseteq V(G)$ of sizes
$$
 1\leq x:=|X|<\frac{n}{2R} \qquad \textrm{and}
 \qquad |Y|=Rx,
$$
and set $Z = V(G)\setminus(X\cup Y)$.  We claim that
\begin{equation}
\label{eq:expansion-boundary}
\begingroup
\expandafter\let\expandafter\label\csname ltx@label\endcsname
\label{eq:sublinear-expansion-boundary}
\endgroup
 e_G(X,Z)\geq\kappa dx.
\end{equation}
Suppose first that $x\leq\alpha n/[4(R+1)]$.
The minimum degree and bijumbledness assumptions imply
\begin{align*}
 e_G(X,Z)
 &=\sum_{v\in X}\deg_G(v)-\ee_G(X,X)-e_G(X,Y) \geq \alpha dx-px^2-\beta x-pRx^2-\beta\sqrt R\,x\\
 &=\alpha dx-p(R+1)x^2-\beta(1+\sqrt R)x \geq\frac{\alpha}{2}dx
 \geq\kappa dx,
\end{align*}
where in the penultimate inequality we use the assumed upper bound on $x$ which implies $p(R+1)x^2\leq\alpha dx/4$, and the first estimate
in~\eqref{eq:expansion-discrepancy-bounds} which implies
$\beta(1+\sqrt R)x\leq\alpha dx/4$.

Suppose then that $x>\alpha n/[4(R+1)]$. Since
$x < n/(2R)$ and $R \geq 2$, we have
$$
 |Z|=n-(R+1)x
 >n\left(1-\frac{R+1}{2R}\right)
 \geq\frac n4.
$$
Consequently,
$$
 \sqrt{x|Z|}
 >\frac n4\sqrt{\frac{\alpha}{R+1}}.
$$
The second estimate
in~\eqref{eq:expansion-discrepancy-bounds} therefore ensures that
$\beta\leq p\sqrt{x|Z|}/2$ holds. The bijumbledness assumption, applied to 
$X$ and $Z$ implies
$$
 e_G(X,Z)
 \geq px|Z|-\beta\sqrt{x|Z|}
 \geq\frac12px|Z|
 \geq\frac18dx
 \geq\kappa dx.
$$
This establishes~\eqref{eq:expansion-boundary} in both ranges.

Suppose that the first expansion condition fails in
$G_\vartheta$. Let $X \subseteq V(G)$ be a set of size
$1 \leq x < n/(2R)$ and let $Y\subseteq V(G)\setminus X$
be a set of size $R x$ such that $\Gamma_{G_\vartheta}(X) \subseteq Y$.
Set $Z=V(G)\setminus(X\cup Y)$.


For each fixed pair $X,Y$, the independence of edge retention
shows that the probability that $E_{G_\vartheta}(X,Z) = \varnothing$ is
$$
 (1-\vartheta)^{e_G(X,Z)}
 =\exp\left(-\Lambda e_G(X,Z)\right)
 \leq\exp(-\Lambda\kappa dx),
$$
where the inequality uses~\eqref{eq:sublinear-expansion-boundary}.
For any given $x$, there are at most $n^x n^{Rx}$ choices
of $X$ and $Y$. It thus follows by a union bound and inequality~\eqref{eq:sublinear-expansion-retention-condition} that
\begin{align*}
 \PP[\text{the first expansion condition (E1) fails}]
 &\leq
 \sum_{1\leq x<n/(2R)}
 \exp\left((R+1)x\log n-\Lambda\kappa dx\right)\\
 &\leq\sum_{x\geq1}n^{-2x}
 =\frac1{n^2-1}.
\end{align*}



Suppose next that the second expansion condition (E2) fails. Set
$t = \lceil n/(2R)\rceil$ and let $A, B \subseteq V(G)$ be disjoint sets of size $t$ each such that $E_{G_\vartheta}(A,B) = \varnothing$. Since $t\geq n/(2R)$, the third
estimate in~\eqref{eq:expansion-discrepancy-bounds} ensures that 
$\beta\leq pt/2$ holds. It then follows by the bijumbledness of $G$ that
$$
 e_G(A,B)
 \geq pt^2-\beta t
 \geq\frac12pt^2
 \geq\frac{pn^2}{8R^2}.
$$
There are at most $\binom nt^2$ such pairs $A,B$ to consider.
The assumption $\log R = o(\log n)$ implies
$R=n^{o(1)}$ and $n/R\to\infty$; in particular,
$t \leq n/R$ holds for all sufficiently large $n$.
Additionally, $n/t \leq 2R$ holds by the definition of $t$.
Using the standard bound $\binom nt\leq(en/t)^t$, independence, and a 
union bound, we obtain
\begin{align*}
 \PP[\text{the second expansion condition (E2) fails}]
 &\leq
 \binom nt^2
 \exp\left(-\frac{\Lambda pn^2}{8R^2}\right)\\
 &\leq
 \exp\left(
   \frac{2n}{R}\log(2eR)
   -\frac{n(R+3)\log n}{8\kappa R^2}
 \right)\\
 &\leq
 \exp\left(-\frac{n\log n}{16\kappa R}\right)
 =o(1),
\end{align*}
where the second inequality holds since
$\Lambda pn \geq (R+3) \log n/\kappa$ by~\eqref{eq:sublinear-expansion-retention-condition}, and the third inequality holds since $\log R=o(\log n)$ implies that $2\log(2eR)\leq\frac{\log n}{16\kappa}$ holds for sufficiently large $n$.
We conclude that a.a.s. both expansion conditions hold. \hfill $\square$

\end{document}